\newcommand{\E}{\mathbb{E}}
\newcommand{\PR}{\mathbb{P}}
\newcommand{\R}{\mathbb{R}}
\newcommand{\TV}{\mathrm{TV}}
\newcommand{\diag}{\textup{diag}}
\newcommand{\td}{\textup{TD}}
\newcommand{\subs}{\Xi^\theta}
\newcommand{\ssubs}{\tilde{\Xi}_i^\theta }
\newcommand{\rescale}{c' }
\newcommand{\khop}{{\color{black}\kappa}}
\newcommand{\rhok}{{\color{black}\rho^{\khop+1}}}
\newcommand{\nik}{{\color{black}N_i^{\khop}}}
\newcommand{\njk}{{\color{black}N_j^{\khop}}}
\newcommand{\nminusik}{{\color{black}N_{-i}^{\khop}}}
\newcommand{\sd}{\pi}
\newcommand{\SD}{D}
\newtheorem{assumption}{Assumption}
\newtheorem{lemma}{Lemma}
\newtheorem{theorem}{Theorem}
\newtheorem{corollary}{Corollary}
\newtheorem{definition}{Definition}
\newcommand{\fvtest}[2]{\ifthenelse{\boolean{isfullversion}}{#1}{#2}}
\newcommand{\lina}[1]{  \ifthenelse{\boolean{showcomments}}
	{ \textcolor{red}{(Lina says:  #1)}} {}  }
\newcommand{\guannan}[1]{  \ifthenelse{\boolean{showcomments}}
	{ \textcolor{blue}{(Guannan says:  #1)}} {}  }
\newcommand{\adam}[1]{  \ifthenelse{\boolean{showcomments}}
	{ \textcolor{red}{(Adam says:  #1)}} {}  }
\title{Scalable Multi-Agent Reinforcement Learning for Networked Systems with Average Reward}
\author{%
  Guannan Qu\\
  Caltech,
  Pasadena, CA \\
  \texttt{gqu@caltech.edu} \\
   \And
   Yiheng Lin \\
   Tsinghua University, Beijing, China \\
   \texttt{linyh16@mails.tsinghua.edu.cn}
      \And
   Adam Wierman \\
   Caltech, Pasadena, CA \\
   \texttt{adamw@caltech.edu}
      \And
   Na Li \\
   Harvard University, Cambridge, MA 02138 \\
   \texttt{nali@seas.harvard.edu}
}
\begin{document}

\maketitle

\begin{abstract}
It has long been recognized that multi-agent reinforcement learning (MARL) faces significant scalability issues due to the fact that the size of the state and action spaces are exponentially large in the number of agents. In this paper, we identify a rich class of networked MARL problems where the model exhibits a local dependence structure that allows it to be solved in a scalable manner. Specifically, we propose a Scalable Actor-Critic (SAC) method that can learn a near optimal localized policy for optimizing the average reward with complexity scaling with the state-action space size of local neighborhoods, as opposed to the entire network. Our result centers around identifying and exploiting an exponential decay property that ensures the effect of agents on each other decays exponentially fast in their graph distance. 
\end{abstract}

\section{Introduction}

As a result of its impressive performance in a wide array of domains such as game play \citep{silver2016mastering,mnih2015human}, robotics \citep{duan2016benchmarking}, and autonomous driving \citep{li2019reinforcement}, Reinforcement Learning (RL) has emerged as a promising tool for decision and control and there has been renewed interest in the use of RL in multi-agent systems, i.e., Multi-Agent RL (MARL). 

The multi-agent aspect of MARL creates additional challenges compared with single agent RL. One core challenge is scalability.  Even if individual agents' state or action spaces are small, the global state space or action space can take values from a set of size exponentially large in the number of agents. This ``curse of dimensionality'' renders the problem intractable in many cases. For example, RL algorithms such as temporal difference (TD) learning or $Q$-learning require storage of a $Q$-function \citep{bertsekas1996neuro} whose size is the same as the state-action space, which in MARL is exponentially large in $n$. Such scalability issues have been observed in the literature in a variety of settings, including \citet{complexity_blondel2000survey,complexity_papadimitriou1999complexity,zhang2019multi,kearns1999efficient,factor_guestrin2003efficient}. 
 
To address the issue of scalability, a promising approach that has emerged in recent years is to exploit problem structure, e.g., \citep{gu2020qlearning,qu2019exploiting,qu2019scalable}. One promising form of structure is enforcing local interactions, i.e., agents are associated with a graph and they interact only with nearby agents in the graph. Such local interactions are common in networked systems, including epidemics \citep{epi_mei2017dynamics}, social networks \citep{application_chakrabarti2008epidemic,application_llas2003nonequilibrium}, communication networks \citep{zocca2019temporal,application_communication}, queueing networks \citep{complexity_papadimitriou1999complexity}, smart transportation \citep{zhang2016control}, smart building systems \citep{application_wu2016optimal,zhang2017decentralized}. 
One powerful property associated with local interactions is the so-called \emph{exponential decay property} \citep{qu2019scalable}, also known as the \emph{correlation decay} \citep{gamarnik2013correlation,gamarnik2014correlation} or \emph{spatial decay} \citep{bamieh2002distributed,motee2008optimal}, which says that the impact of agents on each other decays exponentially in their graph distance. The exponential decay property often leads to potential for scalable, distributed algorithms for optimization and control \citep{gamarnik2013correlation,bamieh2002distributed,motee2008optimal}, and has proven effective for MARL, e.g.,  \citet{qu2019scalable}.

While exploiting local interactions and exponential decay in MARL has proven effective, results so far have been derived only in cases considering discounted total reward as the objective, e.g., \citep{qu2019scalable}. 
This is natural since results focusing on average reward, i.e., the reward in stationarity, are known to be more challenging to derive and require different techniques, even in the single agent RL setting \citep{tsitsiklis1999average}.
However, in many networked system applications, the average reward is a more natural objective. For example, in communication networks, the most common objective is the performance of the system (e.g. throughput) in stationarity. 

In this paper, our goal is to derive results for average reward MARL in networked systems.  However, it is unclear whether it is possible to obtain results that parallel those of \citet{qu2019scalable}, which focuses on discounted reward.  In the average reward case, the exponential decay property exploited by \citet{qu2019scalable} will no longer be true in general because the average reward case can capture certain NP-hard problems in the worst case. For example, \citet{complexity_blondel2000survey,complexity_whittle1988restless,complexity_papadimitriou1999complexity} all point out that such Markov Decision Process (MDP) with product state and action spaces is combinatorial in nature and is intractable from a computational complexity perspective in the worst case. 

\textbf{Contributions.} Despite the worst case intractability results, in this paper we show there are large classes of networked systems where average reward MARL is tractable.  More specifically, our main technical result (Theorem~\ref{thm:exp_decay}) shows that an exponential decay property still holds in the average reward setting under certain conditions that bound the interaction strength between the agents. These conditions are general and, numerically, exponential decay holds with high probability when the problem instance is generated randomly.  Given the presence of exponential decay, we develop a two time-scale actor-critic method \citep{rl_konda2000actor} and show that it finds a localized policy that is a $O(\rhok)$-approximation of a stationary point of the objective function (Theorem~\ref{thm:convergence}) with complexity that scales with the local state-action space size of the largest $\khop$-hop neighborhood, as opposed to the global state-action space size. 
To the best of our knowledge, this is the first such provable guarantee for scalable MARL with average reward for networked systems. 
Finally, to demonstrate the effectiveness of our approach we illustrate its performance in a setting motivated by protocol design for multi-access wireless communication. 

The key analytic technique underlying our results is a novel MDP perturbation result. We show that, when the local interaction between nodes is bounded, a perturbation on the state of an agent has diminishing affect on the state distribution of far away agents, which enables us to establish the exponential decay property that enables tractable learning. Our result has a similar flavor as some static problems in theoretic computer science, like the Lov\'{a}sz local lemma \citep{lovaszll}; or the ``correlation decay'' in \cite{gamarnik2014correlation} for solving combinatorial optimization problems. However, our result handles the more challenging dynamic setting where the states evolve over time. 

\textbf{Related Literature.} 
MARL dates back to the early work of \citet{marl_littman1994markov,marl_claus1998dynamics,marl_littman2001value,marl_nashq} (see \citet{marl_bu2008comprehensive} for a review) and has received considerable attention in recent years, e.g. \citet{zhang2018fully,kar2013cal,macua2015distributed,mathkar2017distributed,wai2018multi} and the review in \citet{zhang2019multi}. MARL considers widely-varying settings, including competitive agents and Markov games; however the setting most relevant to ours is cooperative MARL.  In cooperative MARL, the typical setting is that agents can choose their own actions but they share a common global state 
\citep{marl_bu2008comprehensive}. In contrast, we study a more structured setting where each agent has its own state that it acts upon. 
Despite the different setting, cooperative MARL problems still face scalability issues since the joint-action space is exponentially large. Methods have been proposed to deal with this, including independent learners 
\citep{marl_claus1998dynamics,matignon2012independent}, where each agent employs a single-agent RL method. While successful in some cases, such method can suffer from instability \citep{matignon2012independent}. 
Alternatively, one can approximate the large $Q$-table through linear function approximation \citep{zhang2018fully} or neural networks \citep{lowe2017multi}. Such methods can reduce computation complexity significantly, but it is unclear whether the performance loss caused by the function approximation is small. In contrast, our technique not only reduces computation but also guarantees small performance loss.

Our work adds to the growing literature that exploits exponential decay in the context of networked systems. The exponential decay concept we use is related to the concept of ``correlation decay'' studied in \citet{gamarnik2013correlation,gamarnik2014correlation}, though their focus is on solving static combinatorial optimization problems whereas ours is on learning polices in dynamic environments.  Most related to the current paper is \citet{qu2019scalable}, which considers the same networked MARL setting but focuses on discounted reward. 
The discount factor ensures that exponential decay always holds, in stark contrast to the more challenging average setting which is combinatorial in nature and is intractable in the worst case. 
{Additionally, our result in the average reward case improves the bound on the exponential decay rate in the discounted setting (Corollary~\ref{cor:discounted}). }

More broadly, this paper falls under the category of ``succinctly described'' MDPs in \citet[Section 5.2]{complexity_blondel2000survey}, which shows that when the state/action space is a product space formed by the individual state/action spaces of multiple agents, the resulting Markov Decision Process (MDP) is intractable in general, even when the problem has structure \citep{complexity_blondel2000survey,complexity_whittle1988restless,complexity_papadimitriou1999complexity}. In the context of these worst case complexity results, our work identifies conditions when it is possible to develop scalable methods. 

Finally, our work is related to \textit{factored MDPs} and \emph{weakly coupled MDPs}, though the model and the focus are very different. In factored MDPs, there is a global action affecting every agent whereas in our case, each agent has its own action. In weakly coupled MDPs, agents' transitions are decoupled \citep{weakly_mdp_meuleau1998solving} in stark contrast to our coupled setting.  

\section{Preliminaries}
In this section, we formally introduce the problem and define a key concept, the exponential decay property, that will be used throughout the rest of the paper.
\subsection{Problem Formulation}
We consider a network of $n$ agents that are associated with an underlying undirected graph $\mathcal{G} = (\mathcal{N},\mathcal{E})$, where  $\mathcal{N}=\{1,\ldots,n\}$ is the set of agents and $\mathcal{E}\subset \mathcal{N}\times\mathcal{N}$ is the set of edges.  Each agent $i$ is associated with state $s_i\in\mathcal{S}_i$, $a_i\in\mathcal{A}_i$ where $\mathcal{S}_i$ and $\mathcal{A}_i$ are finite sets. The global state is denoted as $s = (s_1,\ldots,s_n)\in \mathcal{S}:=\mathcal{S}_1\times\cdots\times \mathcal{S}_n$ and similarly the global action $a=(a_1,\ldots,a_n)\in\mathcal{A}:=\mathcal{A}_1\times\cdots\times\mathcal{A}_n$. At time $t$, given current state $s(t)$ and action $a(t)$, the next individual state $s_i(t+1)$ is independently generated and is only dependent on neighbors:
\begin{align}
  P(s(t+1)|s(t),a(t)) = \prod_{i=1}^n P_i(s_i(t+1)|s_{N_i}(t),a_i(t)),  \label{eq:transition_factor}
\end{align}
where notation $N_i$ means the neighborhood of $i$ (including $i$ itself) and $s_{N_i}$ is the states of $i$'s neighbors. In addition, for integer $\khop\geq 1$, we let $\nik$ denote the $\khop$-hop neighborhood of $i$, i.e. the nodes whose graph distance to $i$ is less than or equal to $\khop$, including $i$ itself. We also use $z = (s,a)$ and $\mathcal{Z} = \mathcal{S} \times\mathcal{A}$ to denote the state-action pair (space), and $z_i, \mathcal{Z}_i$ are defined analogously.

Each agent is associated with a class of localized policies $\zeta_i^{\theta_i}$ parameterized by $\theta_i$. 
The localized policy $\zeta_i^{\theta_i}(a_i|s_i)$ is a distribution on the local action $a_i$ conditioned on the local state $s_i$, and each agent, conditioned on observing $s_i(t)$, takes an action $a_i(t)$ independently drawn from $\zeta_i^{\theta_i}(\cdot|s_i(t))$. 
We use $\theta = (\theta_1,\ldots,\theta_n)$ to denote the tuple of the localized policies $\zeta_i^{\theta_i}$, and use $\zeta^\theta(a|s) = \prod_{i=1}^n\zeta_i^{\theta_i}(a_i|s_i)$ to denote the joint policy. 

Each agent is also associated with a stage reward function $r_i(s_i,a_i)$ that depends on the local state and action, and the global stage reward is $r(s,a) = \frac{1}{n}\sum_{i=1}^n r_i(s_i,a_i)$. 
The objective is to find localized policy tuple $\theta$ such that the global stage reward in stationarity is maximized, 
\begin{align}
     \max_\theta J(\theta): = \E_{(s,a) \sim \pi^\theta}   r(s,a), \label{eq:average_reward}
\end{align}
where $\pi^\theta$ is the distribution of the state-action pair in stationarity. We also define $J_i(\theta)$ to be
$J_i(\theta) =\E_{(s,a) \sim \pi^\theta}  r_i(s_i,a_i)$,
which satisfies $J(\theta) = \frac{1}{n}\sum_{i=1}^n J_i(\theta)$.

To provide context for the rest of the paper, we now review a few key concepts in RL. Fixing a localized policy $\theta$, the $Q$ function for the policy is defined as, 
{\small
\begin{align}
Q^\theta(s,a) &= \E_{\theta}\bigg[ \sum_{t=0}^\infty \big( r(s(t), a(t) )  - J(\theta) \big) \bigg|s(0) = s, a(0) = a\bigg]\nonumber \\
&= \frac{1}{n}\sum_{i=1}^n  \E_{\theta} \bigg[ \sum_{t=0}^\infty \Big( r_i(s_i(t) ,a_i(t)) - J_i(\theta) \Big) \bigg|s(0) = s,a(0) = a \bigg] := \frac{1}{n} \sum_{i=1}^n Q_i^\theta(s,a) ,\label{eq:full_q}
\end{align}}where in the last step, we have also defined $Q_i^\theta$, the $Q$-function for the local reward. We note here that both $Q^\theta$ and $Q_i^\theta$ depends on the state-action pair of the whole network, and is thus intractable to compute and store.

Finally, we review the policy gradient theorem (Lemma~\ref{lem:policy_gradient}), an important tool for our analysis. It can be seen that the exact policy gradient relies on the full $Q$-function, which may be intractable to compute due to its large size.
\begin{lemma}[\citet{sutton2000policy}]\label{lem:policy_gradient}
Recall $\pi^\theta $ is the stationary distribution for the state-action pair under policy $\theta$.  
The gradient of $J(\theta)$ is then given by
\[\nabla_\theta J(\theta) =  \E_{(s,a)\sim \pi^\theta} Q^{\theta}(s,a) \nabla_\theta \log \zeta^\theta(a|s). \]
\end{lemma}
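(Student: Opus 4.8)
The plan is to follow the classical argument for the average-reward policy gradient, organized around the Poisson (differential Bellman) equation for the relative value function together with the stationarity of $\pi^\theta$. First I would introduce the relative value function $V^\theta(s) := \sum_a \zeta^\theta(a|s)\, Q^\theta(s,a)$ and record that the $Q$-function of \eqref{eq:full_q} satisfies the Bellman relation $Q^\theta(s,a) = r(s,a) - J(\theta) + \sum_{s'} P(s'|s,a)\, V^\theta(s')$; averaging this against $\zeta^\theta(\cdot|s)$ yields the Poisson equation $V^\theta(s) + J(\theta) = \sum_a \zeta^\theta(a|s)\left( r(s,a) + \sum_{s'} P(s'|s,a)\, V^\theta(s') \right)$. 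A point worth emphasizing is that the transition kernels $P_i$ in \eqref{eq:transition_factor} do not depend on $\theta$; only the policy $\zeta^\theta$ carries the parameter.

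Next I would differentiate the Poisson equation in $\theta$, applying the product rule to the factor $\zeta^\theta(a|s)$ and using $\nabla_\theta \zeta^\theta(a|s) = \zeta^\theta(a|s)\, \nabla_\theta \log \zeta^\theta(a|s)$. Since $\sum_a \nabla_\theta \zeta^\theta(a|s) = \nabla_\theta 1 = 0$, the contribution proportional to $\nabla_\theta J(\theta)$ arising from the ``$-J(\theta)$'' inside $Q^\theta$ cancels, and substituting $r(s,a) + \sum_{s'} P(s'|s,a)\, V^\theta(s') = Q^\theta(s,a) + J(\theta)$ leaves, for every state $s$,
\[
\nabla_\theta V^\theta(s) + \nabla_\theta J(\theta) = \sum_a \zeta^\theta(a|s)\, Q^\theta(s,a)\, \nabla_\theta \log \zeta^\theta(a|s) \;+\; \sum_a \zeta^\theta(a|s) \sum_{s'} P(s'|s,a)\, \nabla_\theta V^\theta(s').
\]
I would then take the expectation of both sides over $s$ drawn from the stationary state marginal $d^\theta$ of $\pi^\theta$ (so that $\pi^\theta(s,a) = d^\theta(s)\, \zeta^\theta(a|s)$). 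Because $\sum_s d^\theta(s) \sum_a \zeta^\theta(a|s)\, P(s'|s,a) = d^\theta(s')$ by stationarity of the Markov chain induced by $\theta$, the last term equals $\sum_{s'} d^\theta(s')\, \nabla_\theta V^\theta(s')$ and cancels the $\nabla_\theta V^\theta$ term on the left; using $\sum_s d^\theta(s) = 1$, what survives is exactly $\nabla_\theta J(\theta) = \E_{(s,a)\sim\pi^\theta}\, Q^\theta(s,a)\, \nabla_\theta \log \zeta^\theta(a|s)$.

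The heart of the matter is analytic rather than algebraic. One must know that the Markov chain induced by each $\theta$ is ergodic, so that $d^\theta$ and $J(\theta)$ are well defined and differentiable in $\theta$; that the infinite sum defining $Q^\theta$ in \eqref{eq:full_q} converges and may be differentiated term by term; and that the relative value function genuinely solves the Poisson equation (equivalently, is finite up to an additive constant). Since all of these facts, along with the final identity, are standard and are established in \citet{sutton2000policy}, I would simply invoke that reference; if a self-contained treatment were desired, a short appendix verifying ergodicity and smoothness of $d^\theta$ under the paper's standing assumptions would suffice. A direct alternative---differentiating $J(\theta) = \sum_{s,a} d^\theta(s)\, \zeta^\theta(a|s)\, r(s,a)$ and expressing $\nabla_\theta d^\theta$ through the fundamental matrix of the chain---also works but is messier, which is why telescoping through the Poisson equation is preferable.
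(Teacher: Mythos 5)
Your derivation is correct: the Bellman relation for the differential $Q$-function, differentiation of the Poisson equation, and cancellation of the $\nabla_\theta V^\theta$ terms via stationarity of $d^\theta$ is exactly the classical average-reward policy gradient argument. The paper gives no proof of this lemma at all---it simply quotes the result from \citet{sutton2000policy}---and your proposal reconstructs precisely the argument in that reference, so the two approaches coincide.
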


\subsection{Exponential Decay Property and Efficient Approximation of $Q$-Function}\label{sec:mainidea}

Given the local interaction structure in the probability transition \eqref{eq:transition_factor}, a natural question to ask is that whether and under what circumstance, nodes that are far away have diminishing effect on each other. This has indeed been the subject of study in various contexts \citep{gamarnik2013correlation,gamarnik2014correlation,bamieh2002distributed,qu2019scalable}, where exponential decay properties or variants have been proposed and exploited for algorithmic design. In what follows, we provide the definition of the exponential decay property in the context of MARL and show its power in approximation of the exponentially large $Q$-function.

We define $\nminusik = \mathcal{N}/\nik$, i.e. the set of agents that are outside of node $i$'s $\khop$-hop neighborhood for some integer $\kappa$. 
We also write state $s$ as $(s_{\nik}, s_{N_{-i}^k})$, i.e. the states of agents that are in the $\khop$-hop neighborhood of $i$ and outside of the $\khop$-hop neighborhood respectively. 
The $(c,\rho)$ exponential decay property is defined below \citep{qu2019scalable}. 
\begin{definition}\label{def:exp_decay}
	The $(c,\rho)$ exponential decay property holds for $c>0, \rho\in(0,1)$, if for any $\theta$, for any $i\in\mathcal{N}$, $s_{\nik}\in \mathcal{S}_{\nik}$, $s_{\nminusik}, s_{\nminusik}'\in \mathcal{S}_{\nminusik} $, $a_{\nik} \in \mathcal{A}_{\nik}$, $a_{\nminusik}, a_{\nminusik}' \in \mathcal{A}_{\nminusik}$, the following holds,
	\begin{align}
	    \Big|Q_i^\theta(s_{\nik},s_{\nminusik}, a_{\nik}, a_{\nminusik}) - Q_i^\theta(s_{\nik},s_{\nminusik}', a_{\nik}, a_{\nminusik}')\Big| \leq c\rhok.  \label{eq:exp_decay_def}
	\end{align}
\end{definition}

The power of the exponential decay property is that it guarantees the dependence of $Q_i^\theta$ on other agents shrinks quickly as the distance between them grows. This naturally leads us to consider the following class of truncated $Q$-functions, where dependence on far-away nodes are ``truncated'',
{\small
\begin{align}
    \tilde{Q}_i^\theta (s_{\nik}, a_{\nik})  = \sum_{s_{\nminusik}\in \mathcal{S}_{\nminusik} , a_{\nminusik}\in \mathcal{A}_{\nminusik}} w_i(s_{\nminusik}, a_{\nminusik};s_{\nik}, a_{\nik} ) Q_i^\theta (s_{\nik},s_{\nminusik}, a_{\nik},a_{\nminusik}),\label{eq:truncated_q}
\end{align}}where $w_i(s_{\nminusik}, a_{\nminusik};s_{\nik}, a_{\nik} )$ are \textit{any} non-negative weights satisfying 
\begin{align}
    \sum_{s_{\nminusik}\in \mathcal{S}_{\nminusik} , a_{\nminusik}\in \mathcal{A}_{\nminusik}} w_i(s_{\nminusik}, a_{\nminusik};s_{\nik}, a_{\nik} ) = 1, \forall (s_{\nik}, a_{\nik}) \in \mathcal{S}_{N_{i}^k}\times \mathcal{A}_{N_{i}^k}. \label{eq:truncated_q_weights}
\end{align}
The following lemma shows that the exponential decay property guarantees the truncated $Q$-function \eqref{eq:truncated_q} is a good approximation of the full $Q$-function \eqref{eq:full_q}. Further, when using the truncated $Q$-functions in the policy gradient (Lemma~\ref{lem:policy_gradient}) in place of the full $Q$ function, the exponential decay property enables an accurate approximation of the policy gradient, which is otherwise intractable to compute in its original form in Lemma~\ref{lem:policy_gradient}. The proof of Lemma~\ref{lem:truncated_pg} can be found in \fvtest{\Cref{sec:truncated_pg}}{Appendix A.1} in the supplementary material.

\begin{lemma}\label{lem:truncated_pg} Under the $(c,\rho)$ exponential decay property, for any truncated $Q$-function in the form of \eqref{eq:truncated_q}, the following holds. 
\begin{itemize}
    \item[(a)] $\forall (s,a)\in \mathcal{S}\times\mathcal{A}$, $|Q_i^\theta(s,a) - \tilde{Q}_i^\theta(s_{\nik},a_{\nik})| \leq c\rhok$.
    \item[(b)] Define the following approximated policy gradient,
\begin{align}
    \hat{h}_i(\theta) = \E_{(s,a) \sim\pi^\theta} \Big[\frac{1}{n}\sum_{j\in {\nik}} \tilde{Q}_j^{\theta} (s_{\njk},a_{\njk})\Big]\nabla_{\theta_i} \log\zeta_i^{\theta_i} (a_i|s_i) . \label{eq:truncated_pg}
\end{align}
 Then, if $\Vert \nabla_{\theta_i} \log \zeta_i^{\theta_i} (a_i|s_i)\Vert\leq L_i$ for any $a_i$, $s_i$, we have
$\Vert \hat{h}_i(\theta) - \nabla_{\theta_i} J(\theta)\Vert \leq  cL_i \rhok $.

\end{itemize}
\end{lemma}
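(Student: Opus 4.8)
The plan is to establish (a) by a one-line convexity estimate and then bootstrap it into (b) using the policy gradient theorem (Lemma~\ref{lem:policy_gradient}) together with the elementary fact that the score function has zero mean. For (a), fix any $(s,a)\in\mathcal{S}\times\mathcal{A}$ and split it as $(s_{\nik},s_{\nminusik},a_{\nik},a_{\nminusik})$. Because the weights $w_i(\cdot\,;s_{\nik},a_{\nik})$ are nonnegative and sum to one over the outside configurations, subtracting \eqref{eq:truncated_q} from $Q_i^\theta(s,a)$ expresses the difference $Q_i^\theta(s,a)-\tilde Q_i^\theta(s_{\nik},a_{\nik})$ as a convex combination, over $(s_{\nminusik}',a_{\nminusik}')$, of the terms $Q_i^\theta(s_{\nik},s_{\nminusik},a_{\nik},a_{\nminusik})-Q_i^\theta(s_{\nik},s_{\nminusik}',a_{\nik},a_{\nminusik}')$. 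Each such term has magnitude at most $c\rhok$ by the $(c,\rho)$ exponential decay property (Definition~\ref{def:exp_decay}), hence so does the convex combination, which is exactly the claim.

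For (b), I start from Lemma~\ref{lem:policy_gradient}. Since $\zeta^\theta(a|s)=\prod_{j}\zeta_j^{\theta_j}(a_j|s_j)$, we have $\nabla_{\theta_i}\log\zeta^\theta(a|s)=\nabla_{\theta_i}\log\zeta_i^{\theta_i}(a_i|s_i)$, and using $Q^\theta=\frac1n\sum_{j=1}^n Q_j^\theta$ this gives $\nabla_{\theta_i}J(\theta)=\E_{(s,a)\sim\pi^\theta}\big[\frac1n\sum_{j=1}^n Q_j^\theta(s,a)\big]\nabla_{\theta_i}\log\zeta_i^{\theta_i}(a_i|s_i)$. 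The crucial step is to argue that the sum in $\hat h_i(\theta)$ may be extended from $j\in\nik$ to all $j\in\mathcal{N}$ for free: if $j\notin\nik$ then, by symmetry of the graph distance, $i\notin\njk$, so $\tilde Q_j^\theta(s_{\njk},a_{\njk})$ depends on neither $s_i$ nor $a_i$; conditioning on $s$ and on $a_{-i}$ (which, given $s$, are independent of $a_i$ because the agents act independently) and using $\E_{a_i\sim\zeta_i^{\theta_i}(\cdot|s_i)}[\nabla_{\theta_i}\log\zeta_i^{\theta_i}(a_i|s_i)]=0$ shows those terms contribute nothing to the expectation. Consequently $\hat h_i(\theta)=\E_{(s,a)\sim\pi^\theta}\big[\frac1n\sum_{j=1}^n\tilde Q_j^\theta(s_{\njk},a_{\njk})\big]\nabla_{\theta_i}\log\zeta_i^{\theta_i}(a_i|s_i)$, so that $\hat h_i(\theta)-\nabla_{\theta_i}J(\theta)=\E_{(s,a)\sim\pi^\theta}\big[\frac1n\sum_{j=1}^n\big(\tilde Q_j^\theta(s_{\njk},a_{\njk})-Q_j^\theta(s,a)\big)\big]\nabla_{\theta_i}\log\zeta_i^{\theta_i}(a_i|s_i)$. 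Applying part (a) to each $j$ bounds the bracketed quantity by $c\rhok$ pointwise, and with $\Vert\nabla_{\theta_i}\log\zeta_i^{\theta_i}(a_i|s_i)\Vert\le L_i$ the triangle inequality (pushing the norm inside the expectation) yields $\Vert\hat h_i(\theta)-\nabla_{\theta_i}J(\theta)\Vert\le cL_i\rhok$.

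I expect the only genuinely delicate point to be the zero-contribution step for far-away $j$: it relies simultaneously on the symmetry of the graph distance (so $j\notin\nik\Leftrightarrow i\notin\njk$), on the truncated $Q$-function being a function of $s_{\njk},a_{\njk}$ only and hence independent of $a_i$ in that regime, and---most importantly---on the conditional independence of the agents' actions given the global state, which is what lets averaging over $a_i$ alone annihilate the score term. Once that is in place, the rest is bookkeeping and the triangle inequality, with part (a) supplying the per-agent error bound.
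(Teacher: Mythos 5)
Your proposal is correct and follows essentially the same route as the paper's proof: part (a) via the convex-combination bound from the exponential decay property, and part (b) by invoking the policy gradient theorem, using the product structure of $\pi^\theta$ and the zero-mean score to show the far-away terms $j\notin\nik$ contribute nothing (the paper phrases this as showing $E_2=0$), and then applying part (a) with the bound $L_i$. The "delicate point" you flag is exactly the step the paper verifies in equation \eqref{eq:truncated_pg_bias}, so no gap remains.
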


The above results show the power of the exponential decay property -- the $Q$ function and the policy gradient can be efficiently approximated despite their exponentially large dimension. These properties can be exploited to design scalable RL algorithms for networked systems.

While the exponential decay property is powerful, we have yet to show it actually holds under the average reward setting of this paper. In \citet{qu2019scalable}, it was shown that the exponential decay property always holds when the objective is the discounted total reward. It turns out the average reward case is fundamentally different. In fact, it is not hard to encode NP-hard problems like graph coloring, 3-SAT into our setup (for an example, see \fvtest{\Cref{sec:nphard}}{Section A.2} in the supplementary material). As such, our problem is intractable in general, and there is no hope for the exponential decay property to hold generally. This is perhaps not surprising given the literature on MDPs with product state/action spaces \citep{complexity_blondel2000survey,complexity_papadimitriou1999complexity}, which all point out the combinatorial nature and intractability of such multi-agent MDP problems. 

Despite the worst case intractability, in the next section we show that a large class of subproblems in fact do satisfy the exponential decay property, for which we design a scalable RL algorithm utilizing the approximation guarantees in Lemma~\ref{lem:truncated_pg} implied by the exponential decay property. 


\section{Main Result}
Despite the worst case intractability, in this section we identify a large subclass where learning is tractable, and illustrate this by developing a scalable RL algorithm for such problems. To this end, in \Cref{subsec:exponential_decay}, we identify a general condition on the interaction among the agents under which the exponential decay property holds. Then, the exponential decay property, together with the idea of truncated $Q$-function and policy gradient outlined in \Cref{sec:mainidea}, is combined with the actor critic framework in \Cref{subsec:algo} to develop a Scalable Actor Critic algorithm that can find a $O(\rhok)$-approximate stationary point of the objective function $J(\theta)$ with complexity scaling with the state-action space size of the largest $\kappa$-hop neighborhood.

\subsection{Exponential Decay Holds when Interactions are Bounded}\label{subsec:exponential_decay}

Our first result is our most technical and it identifies a rich class of problems when the exponential decay property holds.

\begin{theorem}\label{thm:exp_decay}
Define 
\[ {C}_{ij} = \left\{ \begin{array}{ll}
0, & \text{ if }j\notin N_i,\\
\sup_{s_{N_i/j},a_i}\sup_{s_j,s_j'} \TV(  P_i(\cdot|s_j, s_{N_i/j},a_i) , P_i(\cdot| s_j', s_{N_i/j},a_i)   ),  & \text{ if } j\in N_i, j\neq i,\\
\sup_{s_{N_i/i}}\sup_{s_i,s_i',a_i,a_i'} \TV(  P_i(\cdot|s_i, s_{N_i/i},a_i) , P_i(\cdot| s_i', s_{N_i/i},a_i')   ), & \text{ if }j=i,
\end{array} \right.  \]
where $\TV(\cdot,\cdot)$ is the total variation distance bewteen two distributions. 
If for all $i\in \mathcal{N}$, $\sum_{j=1}^n C_{ij} \leq \rho <1$ and $| r_i(s_i,a_i)|\leq \bar{r}, \forall (s_i,a_i)\in \mathcal{S}_i\times\mathcal{A}_i$, then the $(\frac{\bar{r}}{1-\rho},\rho)$ exponential decay property holds. 
\end{theorem}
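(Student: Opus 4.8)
The plan is to control how a perturbation in the "far-away" part of the state-action pair propagates through the dynamics, by tracking the total variation distance between two coupled chains that start from states differing only outside the $\khop$-hop neighborhood of $i$. Fix $\theta$, fix $i$, and fix $(s_{\nik},a_{\nik})$; start one chain from $(s_{\nik},s_{\nminusik},a_{\nik},a_{\nminusik})$ and the other from $(s_{\nik},s_{\nminusik}',a_{\nik},a_{\nminusik}')$. Since $Q_i^\theta$ is a discounted-free sum of centered local rewards $r_i(s_i(t),a_i(t)) - J_i(\theta)$, the difference $Q_i^\theta(\text{start}_1) - Q_i^\theta(\text{start}_2)$ equals $\sum_{t=0}^\infty \E[r_i(s_i(t),a_i(t))\mid \text{start}_1] - \E[r_i(s_i(t),a_i(t))\mid \text{start}_2]$, because the $J_i(\theta)$ terms are the same in both. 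I would bound the $t$-th term by $2\bar r \cdot \TV(\mu_t^1, \mu_t^2)$, where $\mu_t^\ell$ is the law of $(s_i(t),a_i(t))$ under chain $\ell$; actually it suffices to use the law of the full state-action pair, or better, to track a per-coordinate TV bound.

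The key step — and the main obstacle — is a one-step contraction estimate: I claim that for each agent $k$, the TV distance between the two laws of $s_k(t+1)$ is at most $\sum_{j} C_{kj}$ times a suitable measure of how much the two chains currently disagree at $j$. Concretely, because of the factored transition \eqref{eq:transition_factor}, $s_k(t+1)$ depends only on $(s_{N_k}(t),a_k(t))$, and $a_k(t)$ depends only on $s_k(t)$ through the fixed localized policy; so I want to show that swapping the coordinates of the "input" state-action one at a time changes $P_k(\cdot\mid s_{N_k},a_k)$ by at most $C_{kj}$ in TV per swapped coordinate $j$ (with the diagonal term $C_{kk}$ absorbing the simultaneous change of $s_k$ and the induced change of $a_k$), and then use the triangle inequality / a hybrid argument over coordinates together with a coupling. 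The careful part is handling the action: since $a_k$ is a randomized function of $s_k$, I would construct an explicit coupling of $(s_k(t),a_k(t))$ across the two chains so that they agree with probability $1 - (\text{disagreement prob at }s_k)$, which lets the $C_{kk}$ term do double duty. Define $p_k(t) := \TV$ (or the coupling disagreement probability) of $(s_k(t),a_k(t))$ between the two chains; the above yields the linear recursion $p_k(t+1) \le \sum_{j=1}^n C_{kj}\, p_j(t)$, i.e. in vector form $p(t+1) \le C\, p(t)$ entrywise, with $p_k(0) = 0$ for $k\in\nik$ and $p_k(0)\le 1$ for $k\notin\nik$.

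From the recursion, iterating gives $p(t) \le C^t p(0)$ entrywise, and the row-sum bound $\sum_j C_{ij}\le\rho$ means $\|C\|_\infty \le \rho$, so $\|p(t)\|_\infty \le \rho^t \|p(0)\|_\infty \le \rho^t$. But I need a bound on $p_i(t)$ specifically that also decays in $\khop$: here I use that $p_k(0) = 0$ for every $k$ within graph distance $\khop$ of $i$, so $(C^t p(0))_i = 0$ for all $t \le \khop$ because $C_{kj}\ne 0$ only when $j\in N_k$, hence $(C^t p(0))_i$ only "reaches" nodes at distance $\le t$ from $i$. Therefore $p_i(t) = 0$ for $t\le \khop$, and $p_i(t)\le \rho^t$ for $t > \khop$. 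Plugging into the telescoped reward sum:
\begin{align*}
\big|Q_i^\theta(\text{start}_1) - Q_i^\theta(\text{start}_2)\big| \le \sum_{t=0}^\infty 2\bar r\, p_i(t) \le 2\bar r \sum_{t=\khop+1}^\infty \rho^t = \frac{2\bar r\, \rhok}{1-\rho}.
\end{align*}

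This gives the $(\tfrac{2\bar r}{1-\rho},\rho)$ decay property; to reach the stated constant $\tfrac{\bar r}{1-\rho}$ I would sharpen the per-step estimate to $|\E[r_i(s_i(t),a_i(t))\mid\text{start}_1] - \E[r_i\mid\text{start}_2]| \le \bar r\, p_i(t)$ using that $r_i \in [-\bar r,\bar r]$ implies the difference of expectations over two laws with coupling-disagreement $p_i(t)$ is at most $(\max r_i - \min r_i)/2 \cdot$ (something) — more carefully, shifting $r_i$ by a constant does not change the difference of expectations and one can replace $r_i$ by $r_i - \tfrac12(\max_{s_i,a_i}r_i + \min_{s_i,a_i}r_i)$, which has sup-norm $\le \bar r$ only in the worst case; the cleanest route is to note $\E_{\mu^1} r_i - \E_{\mu^2} r_i \le \|r_i\|_\infty \cdot 2\TV(\mu^1,\mu^2)$ is loose and instead the constant-shift trick gives the factor $\bar r$ directly once $p_i(t)$ is the coupling probability rather than TV. I would therefore be careful to define $p_k(t)$ as the probability that the optimally-coupled chains disagree at coordinate $k$, so that $|\E[r_i\mid 1] - \E[r_i\mid 2]| \le 2\bar r\cdot\PR(\text{disagree at }i)$ and, after the constant shift, $\le \bar r \cdot \PR(\text{disagree at }i)$, closing the gap to the claimed constant. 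The main work is thus entirely in the coupling construction and the one-step coordinate-wise contraction; the summation is routine.
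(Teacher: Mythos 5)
Your proposal is correct in substance and shares the overall skeleton of the paper's proof: you reduce the $Q$-difference to a time-summed difference of expected local rewards (the $J_i(\theta)$ terms cancel), establish a linear recursion $p(t+1)\le C\,p(t)$ entrywise with $\Vert C\Vert_\infty\le\rho$, exploit the sparsity of $C$ together with the fact that the initial disagreement is supported outside $\nik$ to conclude $p_i(t)=0$ for $t\le\khop$ and $p_i(t)\le\rho^t$ afterwards, and sum the geometric series. Where you genuinely differ is in how the one-step recursion is established. The paper (Lemma~\ref{lem:markov_chain_exp_decay} via Lemma~\ref{lem:a_induction}) stays entirely on the distributional side: it strengthens the induction hypothesis to a weighted-oscillation bound $|\E_{\pi_{t,\nik}}f-\E_{\tilde\pi_{t,\nik}}f|\le\sum_j a_j\delta_j(f)$ holding simultaneously for \emph{all} test functions on \emph{all} $\khop$-hop neighborhoods, and closes the induction with a product-measure lemma (Lemma~\ref{lem:diff_dist_f}). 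You instead propose a coupling: per-site maximal coupling of the conditional next-step laws given the current joint configuration (legitimate because the transition factorizes across sites), combined with a hybrid one-coordinate-at-a-time argument bounding the conditional disagreement probability at site $k$ by $\sum_j C_{kj}\mathbf{1}(\text{disagree at }j)$, with $C_{kk}$ absorbing the simultaneous change of $s_k$ and the policy-induced change of $a_k$. This is a valid, more probabilistic and arguably more elementary route; the paper's functional induction buys the fact that no global coupling ever needs to be constructed and directly delivers the stronger statement about arbitrary neighborhood functionals. One point you hedge on must be made explicit: the recursion does \emph{not} close if $p_k(t)$ is the marginal TV of the law of $z_k(t)$ between the two chains, since the law of $z_k(t+1)$ depends on the \emph{joint} configuration of $z_{N_k}(t)$ and coordinatewise marginal TVs do not control joint discrepancies. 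Your final choice, defining $p_k(t)$ as the disagreement probability at coordinate $k$ under the explicitly constructed coupling, is exactly what makes $p(t+1)\le C\,p(t)$ legitimate (take conditional expectation of the per-site bound), and it is the correct analogue of the paper's compatibility vectors $a^{(t)}$.

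On the constant: your discussion of the factor of $2$ is muddled, and the shift trick as you describe it does not deliver $\bar r$ under the literal hypothesis $|r_i|\le\bar r$ --- centering $r_i$ still leaves oscillation up to $2\bar r$, so both the coupling bound and the TV bound give $2\bar r\,p_i(t)$ in the worst case, hence $\frac{2\bar r}{1-\rho}\rhok$. The paper's own proof has the same looseness at this step (it bounds the reward-expectation gap by $\bar r\cdot\TV$, which requires the oscillation of $r_i$ to be at most $\bar r$); under Assumption~\ref{assump:reward}, i.e. $0\le r_i\le\bar r$ as used throughout the rest of the paper, the oscillation is at most $\bar r$ and the stated constant $\frac{\bar r}{1-\rho}$ follows from either route. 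This is a constant-factor matter only; the decay rate $\rho$ and the $\rhok$ dependence, which are the substance of the theorem, are unaffected by it.
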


The proof of Theorem~\ref{thm:exp_decay} can be found in \fvtest{\Cref{sec:proof_exp_decay}}{Appendix B.1} in the supplementary material. The key analytic technique underlying our the proof is a novel MDP perturbation result (\fvtest{Lemma~\ref{lem:markov_chain_exp_decay} in \Cref{sec:proof_exp_decay}}{Lemma~3 in Appendix B.1} in the supplementary material). We show that, under the condition in Theorem~\ref{thm:exp_decay}, a perturbation on the state of an agent has exponential decaying affect on the state distribution of far away agents, which enables us to establish the exponential decay property.

In Theorem~\ref{thm:exp_decay}, $C_{ij}$ is the maximum possible change of the distribution of node $i$'next state as a result of a change in node $j$'th current state, where ``change'' is measured in total-variation distance. In other words, $C_{ij}$ can be interpreted as the strength of node $j$'s interaction with node $i$. With this interpretation, Theorem~\ref{thm:exp_decay} shows that, for the exponential decay property to hold, we need (a) the interaction strength between each pair of nodes to be small enough; and (b) each node to have a small enough neighborhood. This is consistent with related conditions in the literature in exponential decay in static combinatorial optimizations, e.g. in \cite{gamarnik2013correlation,gamarnik2014correlation}, which requires the product between the maximum ``interaction'' and the maximum degree is bounded. 

\textit{Relation to discounted reward case. } In the $\gamma$-discounted total reward case in \citet{qu2019scalable}, the exponential decay property is automatically true with decay rate $\gamma$ because for an agent to affect another far-way agent, it takes many time steps for the effect to take place, at which time the effect will have been significantly dampened as a result of the discounting factor. 
This stands in contrast with the average reward case, where it is unclear whether the exponential decay property holds or not as there is no discounting anymore. 
As a result, the analysis for the average reward turns out to be more difficult and requires very different techniques. 
Having said that, the result in the average reward case also has implications for the discounted reward case. Specifically, the results in Theorem~\ref{thm:exp_decay} also leads to a sharper bound on the decay rate in the exponential decay property for the discounted reward case, showing the decay rate can be strictly smaller than the discounting factor $\gamma$. This is stated in the following corollary, and we provide a more detailed explanation on its setting and proof in \fvtest{\Cref{subsec:discounted}}{Appendix~B.3} in the supplementary material. 
\begin{corollary}\label{cor:discounted}
Under the conditions in Theorem~\ref{thm:exp_decay}, the $(\frac{\bar{r}}{1 - \gamma\rho},\rho\gamma) $ exponential decay property holds for the $\gamma$-discounted reward case. 
\end{corollary}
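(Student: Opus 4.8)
The plan is to re-run the argument behind Theorem~\ref{thm:exp_decay}, now keeping the discount factor $\gamma^t$ inside the infinite sum. Recall that in the $\gamma$-discounted setting the local $Q$-function is
\[
  Q_i^\theta(s,a)=\E_\theta\Big[\sum_{t=0}^\infty \gamma^t\, r_i\big(s_i(t),a_i(t)\big)\,\Big|\,s(0)=s,\ a(0)=a\Big].
\]
Fix $\theta$, an agent $i$, a configuration $(s_{\nik},a_{\nik})$ on the $\khop$-hop neighborhood, and two ``outside'' configurations $(s_{\nminusik},a_{\nminusik})$ and $(s_{\nminusik}',a_{\nminusik}')$; write $z_1,z_2$ for the two resulting state--action pairs, which agree on all coordinates in $\nik$. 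By linearity of expectation,
\begin{align}
  Q_i^\theta(z_1)-Q_i^\theta(z_2)=\sum_{t=0}^\infty \gamma^t\Big(\E_\theta\big[r_i(s_i(t),a_i(t))\mid z_1\big]-\E_\theta\big[r_i(s_i(t),a_i(t))\mid z_2\big]\Big).\nonumber
\end{align}

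The crux is to bound each summand by tracking how far the initial perturbation --- which at $t=0$ is confined to the agents in $\nminusik$ --- has spread toward agent $i$ by time $t$. This is exactly the content of the MDP perturbation lemma used to prove Theorem~\ref{thm:exp_decay}: coupling the two chains agent-by-agent through the factorized dynamics \eqref{eq:transition_factor} and propagating the disagreement one step at a time, the triangle inequality for total variation together with the row-sum bound $\sum_{j=1}^n C_{ij}\le\rho$ shows that, between $z_1$ and $z_2$, the law of $(s_i(t),a_i(t))$ changes by at most $\rho^t$ in total variation; and since every agent of $\nminusik$ is at graph distance at least $\khop+1$ from $i$, this change is in fact $0$ for $t\le\khop$. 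Using $|r_i|\le\bar{r}$, each summand is therefore $0$ for $t\le\khop$ and at most $\bar{r}\,\rho^t$ for $t\ge\khop+1$, exactly as in the proof of Theorem~\ref{thm:exp_decay} except that it is now weighted by $\gamma^t$. Summing the geometric series then gives
\begin{align}
  \big|Q_i^\theta(z_1)-Q_i^\theta(z_2)\big|\ \le\ \sum_{t=\khop+1}^\infty \gamma^t\,\bar{r}\,\rho^t\ =\ \frac{\bar{r}}{1-\gamma\rho}\,(\gamma\rho)^{\khop+1},\nonumber
\end{align}
which is precisely the $\big(\tfrac{\bar{r}}{1-\gamma\rho},\gamma\rho\big)$ exponential decay property, with decay rate $\gamma\rho$ --- strictly below the rate $\gamma$ that the discounting-only argument of \citet{qu2019scalable} would give, whenever $\rho<1$.

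The step I expect to be the main obstacle is the middle one: certifying that a perturbation restricted to $\nminusik$ moves the law of agent $i$'s state--action pair at time $t$ by at most $\rho^t$ (and by nothing for $t\le\khop$). This needs a single coupling of all $n$ per-agent transitions built simultaneously, plus an induction on $t$ that carries the disagreement set through the product transition kernel and exploits the Dobrushin-type contraction implicit in $\sum_j C_{ij}\le\rho$. It is, however, precisely the technical core already developed for Theorem~\ref{thm:exp_decay} and can be invoked essentially verbatim; the only genuinely new content of the corollary is that keeping the factor $\gamma^t$ inside the sum replaces the series $\sum_{t\ge\khop+1}\rho^t$ of the average-reward proof by $\sum_{t\ge\khop+1}(\gamma\rho)^t$, sharpening both the constant and the decay rate.
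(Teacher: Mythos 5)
Your proposal is correct and follows essentially the same route as the paper: invoke the Markov-chain perturbation bound already established for Theorem~\ref{thm:exp_decay} (which is a property of the induced chain, independent of the objective) to get $\TV(\pi_{t,i},\pi_{t,i}')=0$ for $t\le\khop$ and $\le\rho^t$ afterwards, then sum the $\gamma^t\bar{r}\rho^t$ series to obtain $\frac{\bar{r}}{1-\gamma\rho}(\gamma\rho)^{\khop+1}$. The only cosmetic difference is that you describe the perturbation lemma as a coupling argument, whereas the paper proves it by induction on $t$ with a compatibility/weighting argument; since you invoke it as a black box, this does not affect correctness.
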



Finally, beyond the theoretic result, we also numerically verify that the exponential decay property holds widely for many randomly generated problem instances. We report these findings in \fvtest{\Cref{sec:example_exp_decay}}{Appendix B.4} in the supplementary material.

\subsection{Scalable Actor-Critic Algorithm and Convergence Guarantee}\label{subsec:algo}
Given the establishment of the exponential decay property in the previous section, a natural idea for algorithm design is to first estimate a truncated $Q$ function \eqref{eq:truncated_q}, compute the approximated policy gradient \eqref{eq:truncated_pg} and then do gradient step. In the following, we combine these ideas with the actor-critic framework in \citet{rl_konda2000actor} which uses Temporal Difference (TD) learning to estimate the truncated $Q$-function, and present a Scalable Actor Critic Algorithm in Algorithm~\ref{algorithm:sac}. In Algorithm~\ref{algorithm:sac}, each step is conducted by all agents but is described from agent $i$'s perspective, and
for the ease of presentation, we have used $z$ ($\mathcal{Z}$) to represent state-action pairs (spaces), e.g. $z_i(t) = (s_i(t),a_i(t)) \in \mathcal{Z}_i = \mathcal{S}_i\times\mathcal{A}_i$, $z_{\nik}(t) = (s_{\nik}(t), a_{\nik}(t))\in \mathcal{Z}_{\nik} = \mathcal{S}_{\nik}\times \mathcal{A}_{\nik}$. 
The algorithm runs on a single trajectory and, at each iteration, it consists of two main steps, the critic (step~\ref{algo:critic_1} and \ref{algo:critic_2}) and the actor (step~\ref{algo:actor_1} and \ref{algo:actor_2}), which we describe in detail below.

\begin{itemize}[leftmargin=18pt]

\item \textbf{The Critic. } In line with TD learning for average reward \citep{tsitsiklis1999average}, the critic consists of two variables, $\hat{\mu}_i^t$, whose purpose is to estimate the average reward; and $\hat{Q}_i^t$, the truncated $Q$ function, which follows the standard TD update. We note here that $\hat{Q}_i^t$ is one dimension less than the $\tilde{Q}_i$ defined in \eqref{eq:truncated_q}. In more detail, $\tilde{Q}_i \in \R^{\mathcal{Z}_\nik}$ is defined on all state-action pairs in the neighborhood, but for $\hat{Q}_i^t$ we select an arbitrary dummy state-action pair $\tilde{z}_{\nik} \in \mathcal{Z}_\nik $, and $\hat{Q}_i^t \in \R^{\hat{\mathcal{Z}}_{\nik}} $ is only defined on $\hat{\mathcal{Z}}_{\nik} = \mathcal{Z}_{\nik} / \{\tilde{z}_\nik\}$. In other words, $\hat{Q}_i^t(z_{\nik})$ is not defined for $z_{\nik} = \tilde{z}_{\nik}$, and when encountered,  $\hat{Q}_i^t(\tilde{z}_{\nik})$ is considered as $0$. The reason for introducing such dummy state-action pair is mainly technical and is standard in value iteration for the average reward MDP, see e.g. \cite{bertsekas2007dpbook}. 

\item \textbf{The Actor. } The actor computes the approximated gradient using the truncated $Q$-function according to Lemma~\ref{eq:truncated_pg} and follows a gradient step. Note that the step size in the gradient step contains a rescaling scalar factor $\Gamma(\mathbf{\hat{Q}}^t)$ which depends on the truncated $Q$-functions $\mathbf{\hat{Q}}^t = \{ \hat{Q}_i^t\}_{i=1}^n$. Such a rescaling factor is used in \citet{rl_konda2000actor} and is mainly an artifact of the proof used to guarantee the actor does not move too fast when $\hat{Q}_i^t$ is large. In numerical experiments, we do not use this rescaling factor. 

\end{itemize}

\begin{algorithm}\caption{Scalable Actor-Critic (SAC)}\label{algorithm:sac}
\DontPrintSemicolon
	    \KwIn{$\theta_i(0)$; parameter $\kappa$; step size sequence $\{\alpha_t,\eta_t\}$.}
	    Initialize $\hat{\mu}_i^0 = 0$ and $\hat{Q}_i^0$ to be the all zero vector in $\R^{\hat{\mathcal{Z}}_{\nik}}$; start from random $z_i(0) =(s_i(0),a_i(0))$ \;
	    
	    \For{$t=0, 1,2,\ldots$}{
	        Receive reward $r_i(t) = r_i(s_i(t),a_i(t))$.\;
	        
	         Get state $s_i(t+1)$, take action $a_i(t+1) \sim \zeta_i^{\theta_i(t)}(\cdot|s_i(t+1))$\; 

	         \tcc{The critic.}
	         Update average reward 
	        	$\hat{\mu}_i^{t+1} = (1-\alpha_t) \hat{\mu}_i^{t} +\alpha_t r_i(t)$.\; \label{algo:critic_1}
	        	
	         Update truncated $Q$ function. If $z_{\nik}(t) = \tilde{z}_{\nik}$, then set $\hat{Q}_i^{t+1}(z_{\nik}) = \hat{Q}_i^t(z_{\nik}), \forall z_{\nik}\in \hat{\mathcal{Z}}_{\nik}$.  Otherwise, set 
	        	        \begin{align*}
	        \hat{Q}_i^{t+1}(z_{\nik}(t)) &= (1 - \alpha_{t}) \hat{Q}_i^{t}(z_{\nik}(t))  + \alpha_{t} (r_i(t) - \hat{\mu}_i^t +  \hat{Q}_i^{t}(z_{\nik}(t+1))), \\
	        \hat{Q}_i^{t+1}(z_{\nik}) &=  \hat{Q}_i^{t}(z_{\nik} ), \text{\quad  for   } z_{\nik}\in \hat{\mathcal{Z}}_\nik /\{ z_{\nik}(t)\},	            
	        \end{align*} 
    with the understanding that $\hat{Q}_i^t(\tilde{z}_{\nik}) = 0$.  \label{algo:critic_2}
	      	      
	      	         \tcc{The actor.}
	         Calculate approximated gradient
	        $\hat{g}_i(t) =  \nabla_{\theta_i} \log \zeta_i^{\theta_i(t)}(a_i(t)|s_i(t)) \frac{1}{n} \sum_{j \in \nik}  \hat Q_j^t(z_{\njk}(t))  $.\; \label{algo:actor_1}
	        
	         Conduct gradient step $\theta_i(t+1) = \theta_i(t) + \beta_t \hat{g}_i(t)$, where $\beta_t = \eta_t \Gamma(\mathbf{\hat{Q}}^t)$ and $\Gamma(\mathbf{\hat{Q}}^t) = \frac{1}{1 + \max_{j} \Vert\hat{Q}_j^t \Vert_\infty}$ is a rescaling scalar.  \label{algo:actor_2}
	    }
\end{algorithm}

In the following, we prove a convergence guarantee for Scalable Acotr Critic method introduced above. To that end, we first describe the assumptions in our result. Our first assumption is that the rewards are bounded, a standard assumption in RL \citep{tsitsiklis1997analysis}. 
\begin{assumption}\label{assump:reward} For all $i$, and $s_i\in \mathcal{S}_i, a_i\in \mathcal{A}_i$, we have $0\leq r_i(s_i,a_i)\leq \bar{r}$.
\end{assumption}
Our next assumption is the exponential decay property, which as shown in \Cref{subsec:exponential_decay}, holds broadly for a large class of problems. 
\begin{assumption} \label{assump:exp_decay} The $(c,\rho)$ exponential decay property holds.
\end{assumption}
Our next assumption is on the step size and is standard \citep{rl_konda2000actor}. We note that our algorithm uses two-time scales, meaning the actor progresses slower than the critic. 

\begin{assumption} \label{assump:stepsize}The positive step sizes $\alpha_t,\eta_t$ are deterministic, non-increasing, square summable but not summable\footnote{A sequence $\alpha_t$ is square summable if $\sum_{t=0}^\infty \alpha_t^2<\infty$; is \emph{not} summable if $\sum_{t=0}^\infty \alpha_t = \infty$. } and satisfy $\sum_{t=0}^\infty (\frac{\eta_t}{\alpha_t})^d <\infty$ for some $d>0$.  
\end{assumption}

Our next assumption is that the underlying problem is uniformly ergodic, which is again standard \citep{rl_konda2000actor}.

\begin{assumption} \label{assump:ergodicity}
Under any policy $\theta$, the induced Markov chain over the state-action space $\mathcal{Z} =\mathcal{S}\times\mathcal{A}$ is ergodic with stationary distribution $\sd^\theta$. Further, (a) For all $z \in\mathcal{Z}$, $\pi^\theta(z)\geq \sigma $ for some $\sigma>0$. (b) $\Vert P^\theta - \mathbf{1} (\sd^\theta)^\top\Vert_{D^\theta} \leq \mu_D$ for some $\mu_D\in (0,1)$, where $\SD^\theta = \diag(\sd^\theta) \in\R^{\mathcal{Z}\times\mathcal{Z}}$ and $\Vert\cdot\Vert_{\SD^\theta}$ is the weighted Euclidean norm $\Vert x \Vert_{\SD^\theta} = \sqrt{x^\top \SD^\theta x} $ for vectors $x\in\R^{\mathcal{Z}}$, and the corresponding induced norm for matrices.
\end{assumption}
Note that, for Markov chains on the state-action pair that are ergodic, Assumption~\ref{assump:ergodicity} are automatically true for some $\sigma>0$ and $\mu_D \in (0,1)$.\footnote{Assumption~\ref{assump:ergodicity}(b) is standard in the study of average reward TD learning, e.g. \citet[Sec. 4.3]{tsitsiklis1999average}.} Assumption~\ref{assump:ergodicity} also requires that it is true with constant $\sigma, \mu_D$ holds uniformly for all $\theta$. Such a uniform ergodicity assumption is common in the literature on actor-critic methods, e.g. \citet{konda2003linear}. Lastly, we assume the gradient is bounded and is Lipschitz continuous, again a standard assumption \citep{qu2019scalable}.

\begin{assumption} \label{assump:gradient} For each $i$, $a_i,s_i$, $\Vert \nabla_{\theta_i} \log \zeta_i^{\theta_i}(a_i|s_i)\Vert\leq L_i$. We let $L := \sqrt{\sum_{i\in\mathcal{N}} L_i^2}$. Further, $ \nabla_{\theta_i} \log \zeta_i^{\theta_i}(a_i|s_i)$ is $L_i'$-Lipschitz continuous in $\theta_i$, and $\nabla J(\theta)$ is $L'$-Lipschitz continuous in $\theta$. 
\end{assumption}
We can now state our main convergence result. Note that the guarantee is asymptotic in nature.  This is to be expected since, to the best of our knowledge, the finite time performance of two-time scale actor-critic algorithms on a single trajectory has long remained an open problem until the very recent progress of \citep{wu2020finite,xu2020non}. We leave a finite time analysis of our algorithm as future work. 

\begin{theorem}\label{thm:convergence}
Under Assumptions~\ref{assump:reward}-\ref{assump:gradient}, we have 
$
    \liminf_{T\rightarrow\infty} \Vert \nabla J(\theta(T))\Vert \leq  L \frac{  c \rhok}{1-\mu_{D}}.
$
\end{theorem}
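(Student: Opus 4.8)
The plan is to treat Algorithm~\ref{algorithm:sac} as a two--time--scale stochastic approximation scheme and run the classical actor--critic argument of \citet{rl_konda2000actor}, with the truncated $Q$--function playing the role of a deliberately biased critic. On the fast time scale the critic variables $(\hat\mu_i^t,\hat Q_i^t)$ should track, along the slowly drifting $\theta(t)$, the quantities $J_i(\theta(t))$ and a fixed point $\bar Q_i^{\theta(t)}$ of the truncated average--reward TD recursion; on the slow time scale the actor performs gradient ascent on $J$ whose search direction, in the limit, has conditional mean $\bar h(\theta)=(\bar h_1(\theta),\dots,\bar h_n(\theta))$, where $\bar h_i(\theta)=\E_{(s,a)\sim\pi^\theta}[\frac1n\sum_{j\in\nik}\bar Q_j^\theta(z_{\njk})]\nabla_{\theta_i}\log\zeta_i^{\theta_i}(a_i|s_i)$. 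Writing $\varepsilon:=L\,c\,\rhok/(1-\mu_{\SD})$, it suffices to establish (i) $\hat\mu_i^t-J_i(\theta(t))\to0$ and $\hat Q_i^t-\bar Q_i^{\theta(t)}\to0$ almost surely, and (ii) $\|\bar h(\theta)-\nabla J(\theta)\|\le\varepsilon$ for every $\theta$; a $\liminf$ argument on the actor then yields the theorem.

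\emph{Step 1 (critic; the main obstacle).} Freeze $\theta$. The $\hat\mu_i$ recursion is a Robbins--Monro averaging of $r_i(s_i(t),a_i(t))$ and converges to $\E_{\pi^\theta}r_i=J_i(\theta)$. The $\hat Q_i$ recursion is tabular average--reward TD(0) on the aggregated variable $z_{\nik}$ with the usual dummy--state normalization (cf.\ \citet{bertsekas2007dpbook}); identifying a function of $z_{\nik}$ with the function of the full state it induces, its expected update operator is $\hat Q_i\mapsto\Pi_i(r_i-J_i\mathbf 1+P^\theta\hat Q_i)$, with $\Pi_i$ the $\SD^\theta$--orthogonal projection onto functions of $z_{\nik}$. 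By Assumption~\ref{assump:ergodicity}(b) and non--expansiveness of $\Pi_i$ this operator is a $\mu_{\SD}$--contraction on the $\pi^\theta$--mean--zero subspace, hence has a unique fixed point $\bar Q_i^\theta$; Assumption~\ref{assump:reward} yields a.s.\ boundedness of the iterates and Assumption~\ref{assump:ergodicity} the geometric mixing of the Markovian samples, giving $\hat Q_i^t\to\bar Q_i^\theta$ under frozen $\theta$. Unfreezing, Assumption~\ref{assump:stepsize} ($\sum_t(\eta_t/\alpha_t)^d<\infty$, so $\beta_t\le\eta_t=o(\alpha_t)$ and the actor crawls relative to the critic) together with Lipschitz dependence of $\theta\mapsto(J(\theta),\bar Q^\theta)$ upgrades this to tracking, i.e.\ (i). Making this single--trajectory, Markovian--noise, two--time--scale tracking argument fully rigorous --- a.s.\ boundedness, perturbed contraction/ODE estimates, uniformity in $\theta$ through Assumptions~\ref{assump:ergodicity}--\ref{assump:gradient} --- is the technically heaviest part and follows the template of \citet{rl_konda2000actor,konda2003linear}.

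\emph{Step 2 (bias of the limiting direction).} By the exponential decay property (Assumption~\ref{assump:exp_decay}, Definition~\ref{def:exp_decay}), $Q_j^\theta$ changes by at most $c\rhok$ when the coordinates outside $\njk$ vary; hence $\|\Pi_j Q_j^\theta-Q_j^\theta\|_\infty\le c\rhok$, and $\Pi_j Q_j^\theta$ is a truncated $Q$--function in the sense of \eqref{eq:truncated_q} (its weights are $w_j(z_{\nminusjk};z_{\njk})=\pi^\theta(z_{\njk},z_{\nminusjk})/\sum_{z'}\pi^\theta(z_{\njk},z')$, nonnegative, summing to $1$). Applying Lemma~\ref{lem:truncated_pg}(b) with these truncations gives $\|\hat h_i(\theta)-\nabla_{\theta_i}J(\theta)\|\le cL_i\rhok$, where $\hat h_i$ is $\bar h_i$ with each $\bar Q_j^\theta$ replaced by $\Pi_j Q_j^\theta$. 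For the remaining gap, subtracting the two fixed--point identities gives $\bar Q_j^\theta-\Pi_j Q_j^\theta=\Pi_j P^\theta(\bar Q_j^\theta-Q_j^\theta)$ (the $r_j-J_j\mathbf 1$ terms cancel); combining the Pythagorean identity $\|\bar Q_j^\theta-Q_j^\theta\|_{\SD^\theta}^2=\|\bar Q_j^\theta-\Pi_j Q_j^\theta\|_{\SD^\theta}^2+\|\Pi_j Q_j^\theta-Q_j^\theta\|_{\SD^\theta}^2$ with Assumption~\ref{assump:ergodicity}(b) applied to the relevant $\pi^\theta$--mean--zero difference yields the standard Galerkin bound $\|\bar Q_j^\theta-\Pi_j Q_j^\theta\|_{\SD^\theta}\le\frac{\mu_{\SD}}{\sqrt{1-\mu_{\SD}^2}}\,c\rhok$. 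Since $\E_{\pi^\theta}\nabla_{\theta_i}\log\zeta_i^{\theta_i}(a_i|s_i)=0$, additive constants in the $\bar Q_j^\theta$'s do not affect $\bar h_i$, so $\|\bar h_i(\theta)-\hat h_i(\theta)\|\le L_i\frac1n\sum_{j\in\nik}\|\bar Q_j^\theta-\Pi_j Q_j^\theta\|_{\SD^\theta}\le L_i\frac{\mu_{\SD}}{\sqrt{1-\mu_{\SD}^2}}c\rhok$; using $1+\frac{\mu_{\SD}}{\sqrt{1-\mu_{\SD}^2}}\le\frac1{1-\mu_{\SD}}$ and $\sum_iL_i^2=L^2$ gives $\|\bar h(\theta)-\nabla J(\theta)\|\le\varepsilon$.

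\emph{Step 3 (actor and conclusion).} Since the $\hat Q_j^t$ are a.s.\ bounded, $\Gamma(\mathbf{\hat{Q}}^t)\in[c_\Gamma,1]$ for some $c_\Gamma>0$, so $\beta_t=\eta_t\Gamma(\mathbf{\hat{Q}}^t)$ obeys $\sum_t\beta_t=\infty$ and $\sum_t\beta_t^2\le\sum_t\eta_t^2<\infty$. Write the actor increment as $\hat g(t)=\bar h(\theta(t))+e_t+M_t$ with $e_t\to0$ (Step 1, absorbing also the Markovian/drift bias of the single--sample estimate) and $M_t$ a martingale difference of bounded second moment. The $L'$--smoothness of $J$ (Assumption~\ref{assump:gradient}) gives $J(\theta(t+1))\ge J(\theta(t))+\beta_t\langle\nabla J(\theta(t)),\bar h(\theta(t))\rangle+\beta_t\langle\nabla J(\theta(t)),e_t+M_t\rangle-\tfrac{L'}2\beta_t^2\|\hat g(t)\|^2$, and by Step 2 with Cauchy--Schwarz, $\langle\nabla J(\theta(t)),\bar h(\theta(t))\rangle\ge\|\nabla J(\theta(t))\|^2-\varepsilon\|\nabla J(\theta(t))\|$. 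If $\liminf_t\|\nabla J(\theta(t))\|>\varepsilon$, then $\langle\nabla J(\theta(t)),\bar h(\theta(t))\rangle\ge\delta$ for some $\delta>0$ and all large $t$; summing the inequality and using that $J$ is bounded (Assumption~\ref{assump:reward}), that $\sum_t\beta_t^2<\infty$ makes the curvature and martingale contributions converge, and that $e_t\to0$ makes $\sum_{s\le t}\beta_s\langle\nabla J(\theta(s)),e_s\rangle$ grow slower than $\sum_{s\le t}\beta_s$, we are forced to have $\delta\sum_t\beta_t<\infty$, contradicting $\sum_t\beta_t=\infty$. Therefore $\liminf_{T\to\infty}\|\nabla J(\theta(T))\|\le\varepsilon=L\,c\,\rhok/(1-\mu_{\SD})$.
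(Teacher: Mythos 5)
Your overall route is the same as the paper's: a two--time--scale actor--critic argument in the style of \citet{rl_konda2000actor,konda2003linear}, with the critic tracking the fixed point of the truncated average--reward TD recursion (the paper's Theorem~\ref{thm:critic}), the bias of the limiting actor direction controlled by the exponential decay property, and a smoothness/telescoping $\liminf$ argument at the end. Your Step 2 is a legitimate minor variant of the paper's Lemma~\ref{lem:critic:fixed_point}: you split the bias into the pure truncation error (handled by Lemma~\ref{lem:truncated_pg}(b) applied to the conditional--expectation truncation $\Pi_j Q_j^\theta$, exactly the paper's $\tilde\Phi_j\Pi_j^\theta Q_j^\theta$) plus the Galerkin gap $\Vert \bar Q_j^\theta-\Pi_j Q_j^\theta\Vert_{\SD^\theta}\le \frac{\mu_D}{\sqrt{1-\mu_D^2}}c\rhok$ obtained by Pythagoras, whereas the paper bounds the total distance directly by $\frac{c\rhok}{1-\mu_D}$ via a one--line contraction estimate; your constant is marginally sharper and relaxes to the same $\varepsilon$, and your handling of the additive constant through $\E_{\pi^\theta}\nabla_{\theta_i}\log\zeta_i^{\theta_i}=0$ matches the paper's.

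The one place where your argument, as written, would not go through is the noise decomposition in Step 3. You write $\hat g(t)=\bar h(\theta(t))+e_t+M_t$ with $M_t$ a martingale difference and $e_t\to 0$, folding ``the Markovian/drift bias of the single--sample estimate'' into $e_t$. That term does not vanish: $z(t)$ is a single sample from the (slowly drifting) Markov chain, so the gap $F^{\theta(t)}(\mathbf{\hat Q}^t,z(t))-\bar F^{\theta(t)}(\mathbf{\hat Q}^t)$ is $O(1)$ for all $t$, and it is not conditionally centered either, so it is neither absorbed by $M_t$ nor by a vanishing $e_t$. What is true --- and what the telescoping argument actually needs --- is that the weighted partial sums $\sum_t\eta_t\langle\nabla J(\theta(t)),\Gamma_t e_i^1(t)\rangle$ converge a.s.; establishing this requires the Poisson--equation decomposition (the paper's Lemma~\ref{lem:actor:e1}, splitting into a square--summable martingale part, an Abel--summation/telescoping part, and drift terms controlled by the Lipschitz continuity of $\hat F^\theta(\mathbf{\hat Q},\cdot)$ in $(\theta,\mathbf{\hat Q})$ proved in Lemma~\ref{lem:actor:hatF}), together with the uniform mixing in Assumption~\ref{assump:ergodicity}. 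With that lemma in place of your ``$e_t\to0$'' claim (and keeping your $e^2$-type tracking error, which does vanish by Theorem~\ref{thm:critic}(b)), your Step 3 contradiction argument is exactly the paper's and yields the stated bound.
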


Briefly speaking, Theorem~\ref{thm:convergence} is a consequence of the power of exponential decay property in approximating the $Q$-function (Lemma~\ref{lem:truncated_pg}), and its proof also uses tools from the stochastic approximation literature \citep{konda2003linear}. The complete proof of Theorem~\ref{thm:convergence} is provided in 
\fvtest{\Cref{sec:actor}}{Appendix D} in the supplementary material. 
We comment that the asymptotic bound in Theorem~\ref{thm:convergence} does not depend on parameters like $L',\bar{r}$, but these parameters will affect the convergence rate of the algorithm. 
From Theorem~\ref{thm:convergence}, our Scalable Actor Critic algorithm can find an approximated stationary point with gradient size $O(\rhok)$ that decays exponentially in $\kappa$. 
Therefore, even for a small $\kappa$, our algorithm can find an approximate local minimizer in a \emph{scalable} manner, with the complexity of the algorithm only scales with the state-action space size of the largest $\khop$-hop neighborhood due to the use of the truncated $Q$ function, which could be much smaller than the full state-action space size (which are exponentially large in $n$) when the graph is sparse. 

\section{Numerical Experiments}
We consider a wireless network with multiple access points \citep{zocca2019temporal}, where there is a set of users  $U = \{u_1, u_2, \cdots, u_n\},$ and a set of the network access points $Y = \{y_1, y_2, \cdots, y_m\}$. Each user $u_i$ only has access to a subset $Y_i \subseteq Y$ of the access points. 
At each time step, with probability $p_i$, user $u_i$ receives a new packet with deadline $d_i$, which is the number of steps when the packet will be discarded if not transmitted yet. 
Then user $u_i$ can choose to send one of the packets in its queue to one of the access points $y_k$ in its available set $ Y_i$. 
If no other users select this same access point, then the packet is transmitted with success probability $q_k$ depending on the access point; however, if two users choose to send packets to the same access point, neither packet is sent. A user receives a local reward of $1$ once successfully sending a packet. 
Such a setting fits to our framework, as the users only interact locally, where ``local'' is defined by the conflict graph, in which two users $u_i$ and $u_j$ are neighbors if and only if they share an access point, i.e. $Y_i\cap Y_j\neq \emptyset$. 
We leave a detailed description of the state/action space, transition probability and reward to \fvtest{\Cref{sec:numerical_detail}}{Appendix E} in the supplementary material.

In this setting, we consider a grid of $5\times 5$ users  in Figure~\ref{fig:communication_grid}, where each user has access points on the corners of its area. We run the Scalable Actor Critic algorithm with $\khop=1$ to learn a localized soft-max policy. 
We compare the proposed method with a benchmark based on the localized ALOHA protocol \citep{aloha}, where each user has a certain probability of sending the earliest packet and otherwise not sending at all. 
When it sends, it sends the packet to a random access point in its available set, with probability proportion to the success probability of this access point and inverse proportion to the number of users that share this access point.
The results are shown in Figure~\ref{fig:comm_ave_reward_5by5}. 
Due to space constraints, we leave the details of the experiment to \fvtest{\Cref{sec:numerical_detail}}{Appendix E} in the supplementary material. 

\begin{figure}
  \begin{minipage}[b]{0.5\textwidth}
    \centering
    \includegraphics[width = .8\textwidth]{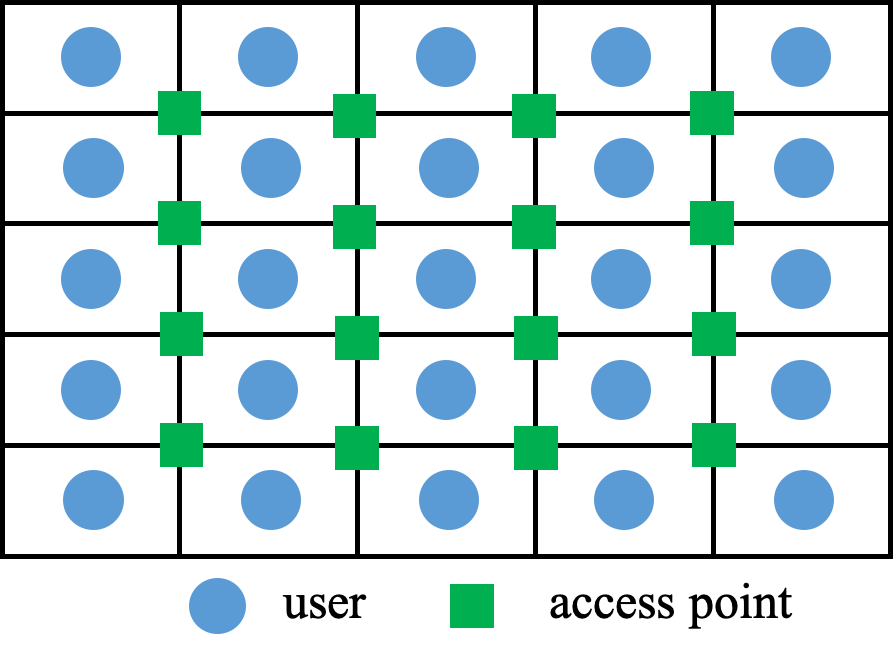}
    \caption{Setup of users and access points.}
    \label{fig:communication_grid}
  \end{minipage}~
  \begin{minipage}[b]{0.5\textwidth}
    \centering
    \includegraphics[width = \textwidth]{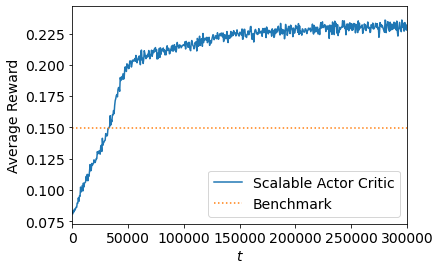}
    \caption{Average reward over the training process. }
    \label{fig:comm_ave_reward_5by5}
  \end{minipage}
\end{figure}

\newpage
\section*{Broader Impact}
This paper contributes to the theoretical foundations of multi-agent reinforcement learning, with the goal of developing tools that can apply to the control of networked systems. The work can potentially lead to RL-based algorithms for the adaptive control of cyber-physical systems, such as the power grid, smart traffic systems, communication systems, and other smart infrastructure systems.  While the approach is promising, as with other all theoretical work, it is limited by its assumptions.  Applications of the proposed algorithm in its current form should be considered cautiously since the analysis here focuses on efficiency and does not consider the issue of fairness.  

We see no ethical concerns related to this paper.  

\bibliographystyle{plainnat}
\bibliography{references}

\newpage

\fvtest{
\section*{Supplementary Materials}
\appendix

\section{Appendix to \Cref{sec:mainidea}}
\subsection{Proof of Lemma~\ref{lem:truncated_pg}} \label{sec:truncated_pg}
We first show part (a) that the truncated $Q$ function is a good approximation of the true $Q$ function. To see that, we have for any $(s,a)\in\mathcal{S}\times \mathcal{A}$, by \eqref{eq:truncated_q} and \eqref{eq:truncated_q_weights}, 
\begin{align}
&|\tilde{Q}_i^\theta (s_{\nik}, a_{\nik}) - Q_i^\theta(s,a)| \nonumber  \\
&= \Big| \sum_{s_{\nminusik}', a_{\nminusik}'} w_i(s_{\nminusik}', a_{\nminusik}';s_{\nik}, a_{\nik} ) Q_i^\theta (s_{\nik},s_{\nminusik}', a_{\nik},a_{\nminusik}') - Q_i^\theta(s_{\nik},s_{\nminusik},a_{\nik},a_{\nminusik}) \Big|\nonumber \\
&\leq \sum_{s_{\nminusik}', a_{\nminusik}'} w_i(s_{\nminusik}', a_{\nminusik}';s_{\nik}, a_{\nik} ) \Big| Q_i^\theta (s_{\nik},s_{\nminusik}', a_{\nik},a_{\nminusik}') - Q_i^\theta(s_{\nik},s_{\nminusik},a_{\nik},a_{\nminusik}) \Big|\nonumber\\
&\leq c\rhok,\label{appendix:truncated:eq:q_err}
\end{align}
where in the last step, we have used the $(c,\rho)$ exponential decay property, cf. Definition~\ref{def:exp_decay}. 

Next, we show part (b). Recall by the policy gradient theorem (Lemma~\ref{lem:policy_gradient}),
\begin{align*}
    \nabla_{\theta_i} J(\theta) &=  \E_{(s,a)\sim \pi^\theta} Q^{\theta}(s,a) \nabla_{\theta_i} \log \zeta^\theta (a|s)
    = \E_{(s,a)\sim\pi^\theta} Q^{\theta}(s,a) \nabla_{\theta_i} \log \zeta_i^{\theta_i} (a_i|s_i),
\end{align*}
where we have used $\nabla_{\theta_i}\log\zeta^{\theta}(a|s) =\nabla_{\theta_i} \sum_{j\in\mathcal{N}} \log\zeta_j^{\theta_j}(a_j|s_j)=\nabla_{\theta_i}\log\zeta_i^{\theta_i}(a_i|s_i)$ by the localized policy structure. With the above equation, we can compute $\hat{h}_i(\theta) - \nabla_{\theta_i} J(\theta)$,
\begin{align*}
    \hat{h}_i(\theta) - \nabla_{\theta_i} J(\theta) 
    & =  \E_{(s,a)\sim\pi^\theta} \Big[\frac{1}{n}\sum_{j\in {\nik}} \tilde{Q}_j^{\theta} (s_{\njk},a_{\njk}) -  Q^{\theta}(s,a)\Big] \nabla_{\theta_i}  \log \zeta_i^{\theta_i}(a_i|s_i)\\
    &=  \E_{(s,a)\sim\pi^\theta} \Big[\frac{1}{n}\sum_{j\in \mathcal{N}} \tilde{Q}_j^{\theta} (s_{\njk},a_{\njk}) - \frac{1}{n}\sum_{j\in \mathcal{N}} Q^{\theta}_j(s,a)\Big] \nabla_{\theta_i}  \log \zeta_i^{\theta_i} (a_i|s_i) \\
    &\qquad - \E_{(s,a)\sim\pi^\theta} \frac{1}{n}\sum_{j\in \nminusik} \tilde{Q}_j^{\theta} (s_{\njk},a_{\njk}) \nabla_{\theta_i}  \log \zeta_i^{\theta_i} (a_i|s_i)\\
    &:= E_1 - E_2.
\end{align*}
We claim that $E_2 = 0$. To show this, note that $\pi^\theta(s,a) = d^\theta(s) \prod_{\ell=1}^n \zeta^{\theta_\ell}_\ell(a_\ell|s_\ell) $, where $d^\theta$ is the sationary distribution of the state. Then, for any $j\in \nminusik$, we have,
    \begin{align}
        &\mathbb{E}_{(s,a)\sim\pi^\theta}  \nabla_{\theta_i}  \log \zeta_i^{\theta_i}(a_i|s_i)    \tilde{Q}_j^{\theta}(s_{\njk},a_{\njk}) \nonumber \\
        &= \sum_{s,a} d^\theta(s) \prod_{\ell=1}^n \zeta^{\theta_\ell}_\ell(a_\ell|s_\ell) \frac{\nabla_{\theta_i}  \zeta_i^{\theta_i}(a_i|s_i)}{\zeta_i^{\theta_i}(a_i|s_i)} \tilde{Q}_j^{\theta}(s_{\njk},a_{\njk})\nonumber\\
        &= \sum_{s,a} d^\theta(s) \prod_{\ell\neq i} \zeta^{\theta_\ell}_\ell(a_\ell|s_\ell) \nabla_{\theta_i}  \zeta_i^{\theta_i}(a_i|s_i)  \tilde{Q}_j^{\theta}(s_{\njk},a_{\njk})\nonumber\\
        &= \sum_{s,a_{1},\ldots,a_{i-1},a_{i+1},\ldots,a_n } d^\theta(s) \prod_{\ell\neq i} \zeta^{\theta_\ell}_\ell(a_\ell|s_\ell)  \tilde{Q}_j^{\theta}(s_{\njk},a_{\njk}) \sum_{a_i} \nabla_{\theta_i}   \zeta_i^{\theta_i}(a_i|s_i) \nonumber\\
        & = 0,\label{eq:truncated_pg_bias}
    \end{align}
    where in the last equality, we have used $\tilde{Q}_j^{\theta}(s_{\njk},a_{\njk})$ does not depend on $a_i$ as $i\not\in \njk$; and $\sum_{a_i} \nabla_{\theta_i}   \zeta_i^{\theta_i}(a_i|s_i) =  \nabla_{\theta_i}   \sum_{a_i} \zeta_i^{\theta_i}(a_i|s_i) =\nabla_{\theta_i} 1 = 0 $. Now that we have shown $E_2=0$, we can bound $E_1$ as follows,
    \begin{align*}
        \Vert\hat{h}_i(\theta) - \nabla_{\theta_i} J(\theta)\Vert = \Vert E_1\Vert & \leq  \E_{(s,a)\sim\pi^\theta} \frac{1}{n} \sum_{j\in \mathcal{N}} \Big|\tilde{Q}_j^{\theta} (s_{\njk},a_{\njk}) - Q^{\theta}_j(s,a)\Big| \Vert \nabla_{\theta_i}  \log \zeta_i^{\theta_i} (a_i|s_i)\Vert\\
       &\leq  c\rhok L_i,
    \end{align*}
where in the last step, we have used \eqref{appendix:truncated:eq:q_err} and the upper bound $\Vert \nabla_{\theta_i}  \log \zeta_i^{\theta_i} (a_i|s_i)\Vert \leq L_i$. This concludes the proof of Lemma~\ref{lem:truncated_pg}.
\qed

\subsection{An Example of Encoding NP-Hard Problems into MARL Setup} \label{sec:nphard}
In this subsection, we provide an example on how NP hard problems can be encoded into the averge reward MARL problem with local interaction structure. We use the example of $k$-graph coloring in graph theory described as follows \citep{golovach2014survey}. Given a graph $\mathcal{G} = (\mathcal{V},\mathcal{E})$ and a set of $k$ colors $U$, a coloring is an assignment of a color $u\in U$ to each vertex in the graph, and a proper coloring is a coloring in which every two adjacent vertices have different colors. The $k$-graph coloring problem is to decide for a given graph, whether a proper coloring using $k$ colors exists, and is known to be NP hard when $k\geq 3$ \citep{golovach2014survey}.  

In what follows, we encode the $k$-coloring problem into our problem set up. 
Given a graph $\mathcal{G} = (\mathcal{V},\mathcal{E})$, we identify the set of agents $\mathcal{N}$ with $\mathcal{V}$ and their interaction graph as $\mathcal{G}$. The local state is tuple $s_i=(u_i,b_i) \in \mathcal{S}_i = U\times \{0,1\}$, where $u_i$ represents the color of node $i$ and $b_i$ is a binary variable indicating whether node $i$ has a different 
color from all nodes adjacent to $i$. We also identify action space $\mathcal{A}_i = U$ to be the set of colors. The state transition is given by the following rule, which satisfy the local interaction structure in \eqref{eq:transition_factor}: given $s_j(t) = (u_j(t),b_j(t))$ for $j\in N_i$ and $a_i(t)$, we set $u_i(t+1) = a_i(t)$, and if for all neighbors $j\in N_i/\{i\}$, $u_i(t)$ does not have the same color as $u_j(t)$, we set $b_i(t+1) = 1$; otherwise, set $b_i(t+1) = 0$. 

Given $s_i = (u_i,b_i)$ and $a_i$, we also set the local reward $r_i(s_i,a_i) = 1$ if $b_i = 1$ (i.e. node $i$ does not have the same color as any of its adjacent nodes), and otherwise the reward is set as $0$. The local policy class is such that $a_i(t)$ is not allowed to depend on $s_i(t)$ but can be drawn from any distribution on the action space, i.e. the set of colors. In other words, $\zeta_i^{\theta_i}(\cdot)$ is a distribution on the action space, parameterized by $\theta_i$.  

For policy $\theta = (\theta_i)_{i\in\mathcal{V}}$, it is clear that the stationary distribution of $s_i$ is simply $\zeta_i^{\theta_i}$; the stationary distribution for $b_i$, which we denote as $\pi_{b_i}^{\theta}$, is given by, 
$ \pi_{b_i}^{\theta} (1) =  \PR(a_i\neq a_j, \forall j\in N_i/\{i\})$, where in the probability, $a_i$ is independently sampled from $\zeta_i^{\theta_i}$ and $a_j$ from $\zeta_j^{\theta_j}$. Further, in this case, the objective function (average reward) is given by 
\[J(\theta) =  \frac{1}{|\mathcal{V}|} \sum_{i\in\mathcal{V}} \pi_{b_i}^{\theta} (1) .\]
It is immediately clear that in the above set up, the maximum possible average reward is $1$ if and only if there exists a proper coloring in the $k$-coloring problem. 
To see this, if there exists a proper coloring $(u_i^*)_{i\in\mathcal{V}}$ in the $k$-coloring problem, then a policy that always sets $a_i(t) = u_i^*$ will drive $s_i(t)$ in two steps to a fixed state $ s_i = (u_i^*, 1)$, which will result in average reward $1$. On the contrary, if there exists a policy achieving average reward $1$, then the support of the action distribution in the policy constitute a set of proper colorings. 

As such, if we can maximize the average reward, then we can also solve the $k$-coloring problem, which is known to be NP-hard when $k\geq 3$. This highlights the difficulty of the average reward MARL problem. 

\section{The exponential decay property and proof of Theorem~\ref{thm:exp_decay}}
In this section, we formally prove Theorem~\ref{thm:exp_decay} in \Cref{sec:proof_exp_decay} that bounded interaction guarantees the exponential decay property holds. We will also provide a proof of Corollary~\ref{cor:discounted} in \Cref{subsec:discounted}, and provide numerical validations of the exponential decay property in \Cref{sec:example_exp_decay}. 

\subsection{Proof of Theorem~\ref{thm:exp_decay}}\label{sec:proof_exp_decay}
Set $s = (s_{\nik},s_{\nminusik}), a= (a_{\nik}, a_{\nminusik})$, and $\tilde{s} = (s_{\nik},\tilde{s}_{\nminusik})$, $\tilde{a} =( a_{\nik}, \tilde{a}_{\nminusik}) $. Recall the exponential decay property (Definition~\ref{def:exp_decay}) is a bound on
\begin{align*}
&\Big |Q_i^\theta(s,a) - Q_i^\theta(\tilde{s},\tilde{a}) \Big| \\
&\leq \sum_{t=0}^\infty \Big|\E_\theta [ r_i(s_i(t),a_i(t))| s(0) = s, a(0) = a] - \E_\theta [ r_i(s_i(t),a_i(t))| s(0) = s', a(0) = a'] \Big| \\
&\leq  \sum_{t=0}^\infty \TV(\pi^\theta_{t,i}, \tilde\pi^\theta_{t,i}) \bar{r},
\end{align*}
where $\pi^\theta_{t,i}$ means the distribution of $(s_i(t),a_i(t))$ conditioned on $(s(0),a(0)) = (s,a)$, and similarly $\tilde\pi^\theta_{t,i}$ is the distribution of $(s_i(t),a_i(t))$ conditioned on $(s(0),a(0)) = (\tilde{s},\tilde{a})$. It is immediately clear that, $\pi^\theta_{t,i}= \tilde\pi^\theta_{t,i}$ for $t\leq \khop$. Therefore, if we can show that
\begin{align}
    \TV(\pi^\theta_{t,i}, \tilde\pi^\theta_{t,i})\leq  \rho^t \text{ for } t>\khop, \label{eq:exp_decay:pi_difference}
\end{align}
it immediately follows that
\[|Q_i^\theta(s,a) - Q_i^\theta(\tilde{s},\tilde{a})|\leq \sum_{t=\khop+1}^\infty  \rho^t\bar{r} = \frac{ \bar{r}}{1-\rho}\rhok, \]
which is the desired exponential decay property. 

We now show \eqref{eq:exp_decay:pi_difference}. Our primary tool is the following result on Markov chain with product state spaces, whose proof is deferred to \Cref{subsec:proof_markov_exp_decay}.
\begin{lemma}\label{lem:markov_chain_exp_decay}
Consider a Markov Chain with state $z=(z_1,\ldots,z_n)\in \mathcal{Z}=\mathcal{Z}_1\times\cdots\times \mathcal{Z}_n$, where each $\mathcal{Z}_i$ is some finite set. Suppose its transition probability factorizes as
\[ P(z(t+1)|z(t)) = \prod_{i=1}^n P_i(z_i(t+1)|z_{N_i}(t))\]
and further, if $\sup_{1\leq i\leq n}\sum_{j=1}^nC^z_{ij}\leq \rho $, where
\[C_{ij}^z =  \left\{ \begin{array}{ll}
0, & \text{ if }j\notin N_i,\\
\sup_{z_{N_i/j}}\sup_{z_j,z_j'} \TV(  P_i(\cdot|z_j, z_{N_i/j}) , P_i(\cdot| z_j', z_{N_i/j})   ) , & \text{ if } j\in N_i,
\end{array} \right.  \]
then for any $z = (z_{\nik},z_{\nminusik})$, $\tilde{z}=(z_{\nik},\tilde{z}_{\nminusik})$, we have,
\[\TV(\pi_{t,i},\tilde{\pi}_{t,i}) = 0 \quad \text{for}\quad t\leq \khop,\qquad \TV(\pi_{t,i},\tilde{\pi}_{t,i}) \leq  \rho^t \quad \text{for}\quad t>\khop,\]
where $\pi_{t,i}$ is the distribution of $z_i(t)$ given $z(0)=z$, and $\tilde{\pi}_{t,i}$ is the distribution of $z_i(t)$ given $z(0) = \tilde{z}$. 
\end{lemma}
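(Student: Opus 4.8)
The plan is to compare the two chains started from $z$ and $\tilde{z}$ on a common probability space through an explicit Markovian coupling, and to track the ``discrepancy set'' $S_t = \{j : z_j(t)\neq\tilde{z}_j(t)\}$ by an infection-type argument. At each step, given the joint state $(z(t),\tilde{z}(t))$, I would draw the coordinates $(z_j(t+1),\tilde{z}_j(t+1))$ independently over $j$ from a coupling of $P_j(\cdot\mid z_{N_j}(t))$ and $P_j(\cdot\mid\tilde{z}_{N_j}(t))$: the identity coupling when these two laws coincide, and a maximal coupling otherwise. Since the transition kernel factorizes over coordinates, this is a genuine coupling of the two chains. By hypothesis $z$ and $\tilde{z}$ agree on $\nik$, so $S_0\subseteq\nminusik$, and by the coupling inequality applied to the $i$-th marginal, $\TV(\pi_{t,i},\tilde\pi_{t,i})\leq\PR(z_i(t)\neq\tilde{z}_i(t))=\PR(i\in S_t)$; it therefore suffices to control $\PR(i\in S_t)$.

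The first substantive step is a one-step bound on the infection probabilities $p_{t,j}:=\PR(j\in S_t)$. If $N_j\cap S_t=\emptyset$ then $z_{N_j}(t)=\tilde{z}_{N_j}(t)$, the two conditional laws agree, and under the identity coupling $z_j(t+1)=\tilde{z}_j(t+1)$, so $j\notin S_{t+1}$. If instead $N_j\cap S_t\neq\emptyset$, then under the maximal coupling $\PR(j\in S_{t+1}\mid\mathcal{F}_t)=\TV(P_j(\cdot\mid z_{N_j}(t)),P_j(\cdot\mid\tilde{z}_{N_j}(t)))$, where $\mathcal{F}_t$ is the natural filtration; switching the disagreeing coordinates of $N_j$ one at a time and using the triangle inequality for total variation together with the definition of $C^z_{jk}$ bounds this by $\sum_{k\in N_j\cap S_t}C^z_{jk}$. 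Because $C^z_{jk}=0$ for $k\notin N_j$, in all cases $\PR(j\in S_{t+1}\mid\mathcal{F}_t)\leq\sum_k C^z_{jk}\mathbf{1}(k\in S_t)$; taking expectations gives $p_{t+1}\leq C^z p_t$ entrywise, hence $p_t\leq (C^z)^t p_0$ entrywise, where $p_0$ is the indicator vector of $S_0$.

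Both conclusions then follow by expanding $((C^z)^t p_0)_i=\sum_{j_1,\ldots,j_t}C^z_{i j_1}C^z_{j_1 j_2}\cdots C^z_{j_{t-1}j_t}\,p_0(j_t)$. A summand is nonzero only if $p_0(j_t)\neq0$, i.e.\ $j_t\in\nminusik$, and every factor is nonzero, i.e.\ consecutive vertices of $(i,j_1,\ldots,j_t)$ are neighbors, so that $d(i,j_t)\leq t$; this is incompatible with $j_t\in\nminusik$ (which forces $d(i,j_t)>\khop$) whenever $t\leq\khop$, giving $p_{t,i}=0$ and hence $\TV(\pi_{t,i},\tilde\pi_{t,i})=0$ for $t\leq\khop$. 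For $t>\khop$ I would simply bound $p_0\leq\mathbf{1}$ and use the row-sum hypothesis $\sum_j C^z_{ij}\leq\rho$, which by an immediate induction on $t$ gives $((C^z)^t\mathbf{1})_i\leq\rho^t$, so $\TV(\pi_{t,i},\tilde\pi_{t,i})\leq p_{t,i}\leq\rho^t$.

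The main obstacle is purely in the careful bookkeeping of the coupling: checking that the coordinate-wise construction really is a coupling of the full chains, that $S_t$ is adapted so the conditional-probability manipulations are legitimate, and --- most importantly --- the hybrid argument that turns the single-coordinate sensitivity bounds baked into $C^z_{jk}$ into the additive bound $\sum_{k\in N_j\cap S_t}C^z_{jk}$ on $\TV(P_j(\cdot\mid z_{N_j}(t)),P_j(\cdot\mid\tilde{z}_{N_j}(t)))$. Once that is in place, the remainder is elementary manipulation of a nonnegative matrix whose every row sums to at most $\rho$.
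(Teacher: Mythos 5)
Your argument is correct, and it reaches the lemma by a genuinely different route than the paper. You build a Markovian coordinate-wise coupling (identity where the conditional laws agree, maximal otherwise), track the disagreement set $S_t$, and use the hybrid/triangle-inequality step to get $\PR(j\in S_{t+1}\mid\mathcal{F}_t)\leq\sum_k C^z_{jk}\mathbf{1}(k\in S_t)$, hence $p_t\leq (C^z)^t p_0$ entrywise and $\TV(\pi_{t,i},\tilde\pi_{t,i})\leq p_{t,i}$ by the coupling inequality applied to the $i$-th marginal; the path expansion and the row-sum hypothesis then give the two claims exactly as you state. The paper instead proceeds analytically: it introduces ``$t$-compatible'' weight vectors $a^{(t)}$ bounding $\bigl|\E_{\pi_{t,\nik}}f-\E_{\tilde\pi_{t,\nik}}f\bigr|\leq\sum_j a_j^{(t)}\delta_j(f)$ for arbitrary test functions $f$ of any $\khop$-neighborhood, proves the induction step $a^{(t)}=C a^{(t-1)}$ via a product-distribution comparison lemma (a Dobrushin-style bound $|\E_P f-\E_{\tilde P}f|\leq\sum_i\TV(P_i,\tilde P_i)\delta_i(f)$), and finally specializes $f$ to indicator functions to recover the TV bound; the matrix recursion, the vanishing for $t\leq\khop$, and the $\rho^t$ bound via $\Vert C\Vert_\infty\leq\rho$ are then identical to yours. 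The two proofs are dual in flavor: your forward ``infection'' coupling is more elementary and probabilistically transparent, and it sidesteps the auxiliary product-measure lemma entirely (replacing it with the standard hybrid bound on the conditional kernels plus maximal coupling), while the paper's functional induction yields a somewhat stronger intermediate statement --- control of expectation differences for all functions of every neighborhood, weighted by their variations --- of which the single-site TV bound needed for the lemma is a special case. Both hinge on the same quantity, the interaction matrix $C^z$ with row sums at most $\rho$, and both conclude with the same path-counting and row-sum arguments, so your route is a legitimate and self-contained alternative.
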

We now set the Markov chain in Lemma~\ref{lem:markov_chain_exp_decay} to be the induced Markov chain of our MDP with a localized policy $\theta$, with $z_i=(s_i,a_i)$ and $\mathcal{Z}_i = \mathcal{S}_i\times \mathcal{A}_i$. For this induced chain, we have the transition factorized as,
\[P(s(t+1),a(t+1)|s(t),a(t)) = \prod_{i=1}^n \zeta_i^{\theta_i}(a_i(t+1)|s_i(t+1))P_i(s_i(t+1)|s_i(t),a_i(t),s_{N_i}(t)). \]
Then, $C_{ij}^z$ in Lemma~\ref{lem:markov_chain_exp_decay} becomes
\[C_{ij}^z = \left\{ \begin{array}{ll}
0, & \text{ if } j\notin N_i,\\
\sup_{s_{N_i/j},a_i}\sup_{s_j,s_j'} \TV(  P_i(\cdot|s_j, s_{N_i/j},a_i) , P_i(\cdot| s_j', s_{N_i/j},a_i)   ),  & \text{ if } j\in N_i/i, \\
\sup_{s_{N_i/i}}\sup_{s_i,s_i',a_i,a_i'} \TV(  P_i(\cdot|s_i,a_i, s_{N_i/i}) , P_i(\cdot| s_i',a_i', s_{N_i/i})   ),  & \text{ if } j=i,
\end{array} \right.\]
which is precisely the definition of $C_{ij}$ in Theorem~\ref{thm:exp_decay}. As a result, the condition in Theorem~\ref{thm:exp_decay} implies the condition in Lemma~\ref{lem:markov_chain_exp_decay} holds, regardless of the policy parameter $\theta$. Therefore, \eqref{eq:exp_decay:pi_difference} holds and Theorem~\ref{thm:exp_decay} is proven. 

\subsection{Proof of Lemma~\ref{lem:markov_chain_exp_decay}}\label{subsec:proof_markov_exp_decay}

We do two runs of the Markov chain, one starting with $z$ with trajectory $z(0),\ldots,z(t),\ldots$, and another starting with $\tilde{z}$ with trajectory $\tilde{z}(0),\ldots,\tilde{z}(t),\ldots$  We use $\pi_{t}$ ($\tilde{\pi}_t$) to denote the distribution of $z(t)$ ($\tilde{z}(t)$); $\pi_{t,i}$ ($\tilde{\pi}_{t,i}$) to be the distribution of $z_i(t)$ ($\tilde{z}_i(t)$), $\pi_{t,\nik}$ ($\tilde{\pi}_{t,\nik}$) to denote the distribution of $z_{\nik}(t)$ ($\tilde{z}_{\nik}(t)$).


\smallskip
Our proof essentially relies on induction on $t$, and the following Lemma is the key step in the induction. 

\begin{lemma} \label{lem:a_induction}
Given $t$, we say $a = [a_1,\ldots,a_n]^\top$ is $(t-1)$-compatible if for any $i,\khop$, and for any function $f: \R^{\mathcal{Z}_{\nik}}\rightarrow\R$, 
\[ | \E_{z_{\nik}\sim \pi_{t-1,\nik} } f(z_{\nik}) - \E_{z_{\nik}\sim \tilde \pi_{t-1,\nik} } f(z_{\nik})  | \leq \sum_{j\in \nik}  a_j \delta_j(f), \]
	where $\delta_j(f) $ is the variation of $f$'s dependence on $z_j$, i.e. $\delta_j(f)= \sup_{z_{\nik/j}} \sup_{z_j, z_j'} |f(z_j,z_{\nik/j}  ) -f(z_j',z_{\nik/j}  )  | $. Suppose now that $a $ is $(t-1)$-compatible, then we have $a' = [a_1',\ldots,a_n']^\top$ is $t$-compatible, with $a' = Ca $, where $C\in\R^{n\times n}$ is the matrix of $[C_{ij}^z]$. 

\end{lemma}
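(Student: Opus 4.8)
The plan is to unfold one step of both Markov chains and express the distribution of $z_{\nik}(t)$ in terms of the distribution of $z_{N_{\nik}}(t-1)$ — the states at time $t-1$ of all nodes within graph distance $1$ of $\nik$, which is contained in $N_i^{\khop+1}$. Concretely, for any test function $f:\R^{\mathcal{Z}_{\nik}}\to\R$, I would write
\[
\E_{\pi_{t,\nik}} f(z_{\nik}) = \E_{\pi_{t-1}}\Big[ \sum_{z_{\nik}'} \prod_{\ell\in \nik} P_\ell(z_\ell' \mid z_{N_\ell}(t-1))\, f(z_{\nik}') \Big] =: \E_{\pi_{t-1}} g(z_{N_i^{\khop+1}}(t-1)),
\]
and similarly for the tilde chain, where $g$ is the ``one-step pushforward'' of $f$. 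The key point is that $g$ depends on $z_{t-1}$ only through coordinates in $N_i^{\khop+1}$, so the $(t-1)$-compatibility hypothesis applies with radius $\khop+1$, giving
\[
\big| \E_{\pi_{t,\nik}} f - \E_{\tilde\pi_{t,\nik}} f \big| \le \sum_{j\in N_i^{\khop+1}} a_j\, \delta_j(g).
\]
So everything reduces to bounding $\delta_j(g)$, the sensitivity of the pushforward $g$ to a change in a single coordinate $z_j$ at time $t-1$.

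Next I would estimate $\delta_j(g)$. Changing $z_j$ affects $\prod_{\ell\in\nik} P_\ell(\cdot\mid z_{N_\ell})$ only through those factors $\ell\in\nik$ with $j\in N_\ell$. Using a telescoping/hybrid argument over these factors and the elementary fact that coupling two product measures that differ in the $\ell$-th factor costs at most the total variation distance between those factors, one gets that the distribution over $z_{\nik}'$ changes by at most $\sum_{\ell\in\nik}\TV\big(P_\ell(\cdot\mid z_j,\cdots), P_\ell(\cdot\mid z_j',\cdots)\big) \le \sum_{\ell\in\nik} C^z_{\ell j}$ in total variation, uniformly over the frozen coordinates. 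Since $f$ has range-oscillation at most... — more precisely, since $g(z) - g(z')$ is an integral of $f$ against the difference of two probability measures — we obtain $\delta_j(g) \le \big(\sum_{\ell\in\nik} C^z_{\ell j}\big)\,\delta(f)$ where I'd want $\delta(f)$ to be controlled by $\sum_{m\in\nik}\delta_m(f)$; more carefully, tracking which coordinate of $f$ each term touches, $\delta_j(g) \le \sum_{m\in\nik}\big(\sum_{\ell\in\nik} C^z_{\ell j}\,[\text{contribution to coordinate }m]\big)\delta_m(f)$, and after summing this bound has the clean form $\delta_j(g)\le \sum_{m\in\nik} (\text{something}) \delta_m(f)$. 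Substituting back and exchanging the order of summation over $j\in N_i^{\khop+1}$ and $m\in\nik$ yields
\[
\big| \E_{\pi_{t,\nik}} f - \E_{\tilde\pi_{t,\nik}} f \big| \le \sum_{m\in\nik}\Big( \sum_{j} C^z_{mj} a_j \Big)\delta_m(f) = \sum_{m\in\nik} (Ca)_m\, \delta_m(f),
\]
which is exactly the statement that $a' = Ca$ is $t$-compatible.

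The main obstacle I anticipate is the bookkeeping in the sensitivity estimate for $\delta_j(g)$: I need to make sure the coordinate-wise variations $\delta_m(f)$ of the original function are what appear on the right-hand side (not a coarser quantity like $\max f - \min f$), so that iterating the bound stays tight and produces the matrix $C$ rather than $C$ times the degree. This requires carefully attributing, for each factor $P_\ell$ that a perturbation of $z_j$ touches, the change to the specific output coordinate $z_\ell'$ — i.e. using that $P_\ell$ only writes coordinate $\ell$ — and then re-indexing the double sum. Once the hybrid/coupling argument is set up so that ``perturb input coordinate $j$, measure effect on output coordinate $\ell$, weighted by $C^z_{\ell j}$'' is the unit of accounting, the rest is a clean application of the induction hypothesis at radius $\khop+1$ together with the base case $a = [1,\dots,1]^\top$ at $t$ such that $N_i^{\khop}$ lies inside the agreement region, which gives the ``$\TV = 0$ for $t\le\khop$'' and ``$\le\rho^t$'' conclusions of Lemma~\ref{lem:markov_chain_exp_decay} after bounding $\|C^t\mathbf 1\|_\infty \le \rho^t$.
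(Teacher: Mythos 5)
Your proposal follows essentially the same route as the paper's proof: unfold one step of the chain so that $\E_{\pi_{t,\nik}}f$ becomes the expectation of the one-step pushforward $g$ over $z_{N_i^{\khop+1}}(t-1)$, apply the $(t-1)$-compatibility hypothesis at radius $\khop+1$, and bound $\delta_j(g)$ by comparing the two conditional product distributions factor by factor before re-indexing the double sum to read off $a'=Ca$. The per-coordinate sensitivity bound you identify as the key bookkeeping step (attributing to each factor $P_\ell$ touched by a change in $z_j$ the cost $C^z_{\ell j}\,\delta_\ell(f)$, rather than a global oscillation of $f$) is exactly the paper's auxiliary Lemma~\ref{lem:diff_dist_f}, proved there by the same telescoping/hybrid argument over coordinates, so your outline is correct and matches the paper's argument.
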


Now we use Lemma~\ref{lem:a_induction} to prove \Cref{lem:markov_chain_exp_decay}. We fix $i$ and $\khop$. Since $z_{\nik}(0) = \tilde{z}_{\nik}(0)$, we can set $a_j^{(0)} = 0$ for $j\in \nik$, and  $a_j^{(0)} = 1$ for $j\not\in \nik$. It is easy to check such $a^{(0)}$ is $0$-compatible. 
As a result, $a^{(t)} = C^t a^{(0)}$ is $t$-compatible. Since $a^{(0)}$ is supported outside $\nik$, we have for all $t\leq \khop$, $a_i^{(t)} = 0$; and for $t\geq \khop+1$, $a_i^{(t)} = [C^t a^{(0)}]_i \leq (\Vert C\Vert_\infty)^t \Vert a^{(0)}\Vert_\infty \leq \rho^t$. 
As a result, by the definition of $t$-compatible, we set $f:\R^{\mathcal{Z}_i}\rightarrow\R$ to be the indicator function for any event $A_i\subset \mathcal{Z}_i$ (i.e. $f (z_i) = \mathbf{1}(z_i\in A_i)$) and get,
\[ | \pi_{t,i}(z_i \in A_i) - \tilde{\pi}_{t,i}(z_i \in A_i) | \leq a_i^{(t)} ,\]
and if we take the $\sup$ over $A_i$, we directly get,
\[ \TV(\pi_{t,i}, \tilde{\pi}_{t,i})\leq a_i^{(t)}\leq \rho^t, \]
which finishes the proof of Lemma~\ref{lem:markov_chain_exp_decay}. It remains to prove the induction step Lemma~\ref{lem:a_induction}, which is done below.

\textit{Proof of Lemma~\ref{lem:a_induction}:} Recall that the transition probability can be factorized as follows,
\[ P(z(t+1)|z(t)) = \prod_{i=1}^n P_i(z_i(t+1)|z_{N_i}(t)),\]
where the distribution of $z_i(t+1)$ only depends on $z_{N_i}(t)$ with transition probability given by $P_i(z_i(t+1)|z_{N_i}(t))$. We also define $P_{N_i^k}$ to be the transition from $z_{N_i^{k+1}}(t)$ to $z_{N_i^k}(t+1)$,
\[P_{N_i^k} (z_{N_i^k}(t+1)|z_{N_i^{k+1}}(t) ) = \prod_{j\in N_i^k} P_j(z_j(t+1)|z_{N_j}(t)). \]
With these definitions, we have for any $i,\khop$, and for any function $f: \R^{\mathcal{Z}_{\nik}}\rightarrow\R$,
	\begin{align}
&\bigg| \E_{z_{\nik}\sim \pi_{t,\nik} } f(z_{\nik}) - \E_{z_{\nik}\sim \tilde \pi_{t,\nik} } f_{\nik}(z_{\nik})  \bigg| = \nonumber\\
&=\bigg| \E_{z_{N_i^{\khop+1}}'\sim \pi_{t-1,N_i^{\khop+1}} } \E_{z_{\nik} \sim P_{\nik}(\cdot|z_{N_i^{\khop+1}}') } f(z_{\nik}) - \E_{z_{N_i^{\khop+1}}'\sim \tilde \pi_{t-1,N_i^{\khop+1}} } \E_{z_{\nik} \sim  P_{\nik}(\cdot|z_{N_i^{\khop+1}}') } f(z_{\nik}) \bigg|\nonumber\\
&=  \bigg| \E_{z_{N_i^{\khop+1}}'\sim \pi_{t-1,N_i^{\khop+1}} } g(z_{N_i^{\khop+1}}') - \E_{z_{N_i^{\khop+1}}'\sim \tilde \pi_{t-1,N_i^{\khop+1}} } g(z_{N_i^{\khop+1}}')  \bigg|, \label{eq:a_induction:eq_1}
	\end{align}
	where we have defined $ g(z_{N_i^{\khop+1}}')  = \E_{z_{\nik} \sim P_{\nik}(\cdot|z_{N_i^{\khop+1}}') } f(z_{\nik}) $. Since $a=[a_1,\ldots,a_n]^\top$ is $(t-1)$ compatible, we have,
\begin{align*}
 \bigg| \E_{z_{N_i^{\khop+1}}'\sim \pi_{t-1,N_i^{\khop+1}} } g(z_{N_i^{\khop+1}}') - \E_{z_{N_i^{\khop+1}}'\sim \tilde \pi_{t-1,N_i^{\khop+1}} } g(z_{N_i^{\khop+1}}')  \bigg| \leq \sum_{j\in N_i^{\khop+1}} a_j \delta_j( g).
\end{align*}
Now we analyze $\delta_j(g)$. We fix $z_{N_i^{\khop+1}/j}'$, then
\begin{align*}
g(z_j', z_{N_i^{\khop+1}/j}') - g(z_j'',z_{N_i^{\khop+1}/j}')  = \E_{z_{\nik} \sim P_{\nik}(\cdot|z_j' ,z_{N_i^{\khop+1}/j}') } f(z_{\nik}) - \E_{z_{\nik} \sim P_{\nik}(\cdot|z_j'' ,z_{N_i^{\khop+1}/j}') } f(z_{\nik}).
\end{align*}
Taking a closer look,  both $P_{\nik}(\cdot|z_j' ,z_{N_i^{\khop+1}/j}')    $ and  $P_{\nik}(\cdot|z_j'' ,z_{N_i^{\khop+1}/j}')    $ are product distributions on the states in $\nik$, and they differ only for those $\ell\in \nik$ that are adjacent to $j$, i.e. $\nik\cap N_j$. Therefore, we can use the following auxiliary result whose proof is provided in the bottem of this subsection. 
\begin{lemma}\label{lem:diff_dist_f}
	For a function $f$ that depends on a group of variables $z = (z_i)_{i\in V}$, let $P_i$ and $\tilde{P_i}$ to be two distributions on $z_i$. Let $P$ be the product distribution of $P_i$ and $\tilde{P}$ be the product distribution of
	 $\tilde{P}_i$. Then
	 \[|\E_{z\sim P} f(z) - \E_{z\sim \tilde{P}} f(z) | \leq \sum_{i\in V} \TV(P_i, \tilde{P}_i) \delta_i (f) .\]
\end{lemma}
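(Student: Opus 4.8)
\emph{Proof proposal.} The plan is to prove the bound by a telescoping (hybrid) argument over the coordinates in $V$, reducing everything to a single one-coordinate estimate. Enumerate $V = \{1,\dots,m\}$ and, for $k = 0,1,\dots,m$, introduce the hybrid product distribution $H_k$ in which the first $k$ coordinates are drawn from $\tilde P_1,\dots,\tilde P_k$ and the remaining ones from $P_{k+1},\dots,P_m$; then $H_0 = P$ and $H_m = \tilde P$. Writing the telescoping sum
\[
\E_{z\sim P} f(z) - \E_{z\sim \tilde P} f(z) = \sum_{k=1}^m \Bigl( \E_{z\sim H_{k-1}} f(z) - \E_{z\sim H_k} f(z) \Bigr),
\]
I observe that $H_{k-1}$ and $H_k$ are identical product distributions except on the $k$-th coordinate, where one uses $P_k$ and the other $\tilde P_k$. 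Conditioning on the remaining coordinates $z_{-k}$ — whose marginal is the \emph{same} product under both hybrids — each summand becomes
\[
\E_{z_{-k}}\Bigl[ \E_{z_k\sim P_k} f(z_k, z_{-k}) - \E_{z_k\sim \tilde P_k} f(z_k, z_{-k}) \Bigr].
\]

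Next I would invoke the elementary fact that for any two distributions $\mu,\nu$ on a common finite set and any function $h$ on that set, $\lvert \E_\mu h - \E_\nu h\rvert \le \TV(\mu,\nu)\,(\sup h - \inf h)$; this follows by replacing $h$ with $h - \tfrac12(\sup h + \inf h)$, so that $\|h\|_\infty$ shrinks to $\tfrac12(\sup h - \inf h)$, and then using $\sum_z \lvert \mu(z)-\nu(z)\rvert = 2\,\TV(\mu,\nu)$ together with H\"older's inequality. Applying this with $\mu = P_k$, $\nu = \tilde P_k$, and $h(\cdot) = f(\cdot, z_{-k})$ for each fixed $z_{-k}$, and noting that $\sup_{z_k} f(z_k,z_{-k}) - \inf_{z_k} f(z_k, z_{-k}) \le \delta_k(f)$ by the definition of $\delta_k$, the $k$-th summand is at most $\TV(P_k,\tilde P_k)\,\delta_k(f)$ in absolute value. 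Summing over $k$ and applying the triangle inequality to the telescoping sum yields $\lvert \E_{z\sim P} f - \E_{z\sim \tilde P} f\rvert \le \sum_{k\in V} \TV(P_k,\tilde P_k)\,\delta_k(f)$.

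There is no real obstacle here — the statement is a routine consequence of the hybrid method. The only two points deserving a line of care are: (i) verifying that the conditioning step is legitimate, i.e.\ that $z_{-k}$ has exactly the same distribution under $H_{k-1}$ and $H_k$ (true because both are products sharing all factors other than the $k$-th), so that the difference of the two expectations localizes to coordinate $k$; and (ii) stating the one-coordinate bound in terms of the oscillation $\sup h - \inf h$ rather than $2\|h\|_\infty$, since it is precisely this sharper form that produces $\delta_k(f)$ — an oscillation of $f$ in $z_k$ — on the right-hand side rather than a crude sup-norm.
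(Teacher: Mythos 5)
Your proposal is correct and is essentially the paper's own argument: the paper establishes the same one-coordinate estimate (centering $f$ at the midpoint of its range and applying H\"older, yielding $\TV(P_1,\tilde P_1)\,\delta_1(f)$) and then swaps one factor at a time by induction on $|V|$, which is exactly your telescoping hybrid sum in unrolled form. The only cosmetic difference is presentation (explicit hybrids versus induction with the auxiliary averaged function $\bar f$), and both hinge on the same observation that the oscillation in the swapped coordinate is bounded by $\delta_k(f)$.
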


By Lemma~\ref{lem:diff_dist_f}, we have,
\begin{align*}
|g(z_j', z_{N_i^{\khop+1}/j}') - g(z_j'',z_{N_i^{\khop+1}/j}')|  &\leq \sum_{\ell \in \nik\cap N_j} \TV(P_\ell(\cdot|z_j', z_{N_\ell/j}' ), P_\ell(\cdot|z_j'', z_{N_\ell/j}' )) \delta_\ell(f) \\
&\leq \sum_{\ell \in \nik\cap N_j} C_{\ell j}^z\delta_\ell(f) .
\end{align*}
As such, $\delta_j(g) \leq \sum_{\ell \in \nik\cap N_j} C_{\ell j}^z \delta_\ell(f) $, and we can continue \eqref{eq:a_induction:eq_1} and get,
\begin{align*}
\bigg| \E_{z_{\nik}\sim \pi_{t,\nik} } f(z_{\nik}) - \E_{z_{\nik}\sim \tilde \pi_{t,\nik} } f_{\nik}(z_{\nik})  \bigg| &\leq \sum_{j\in N_i^{\khop+1}} a_j \delta_j(g) \\
&\leq \sum_{j\in N_i^{\khop+1}} a_j \sum_{\ell \in \nik\cap N_j} C_{\ell j}^z\delta_\ell(f) \\
&= \sum_{\ell \in \nik} \sum_{j\in N_\ell} a_j  C_{\ell j}^z\delta_\ell(f). 
\end{align*}
This implies $a' = [a_1',\ldots,a_n']^\top$ is $t$-compatible, where $a_\ell' = \sum_{j\in N_\ell} C_{\ell j}^z a_j $. \qed

Finally, we provide the proof of Lemma~\ref{lem:diff_dist_f}.

\textit{Proof of Lemma~\ref{lem:diff_dist_f}:}	We do induction on the size of $|V|$. For $|V|=1$, we have 
		 \[|\E_{z_1\sim P_1 } f(z_1) - \E_{z_1\sim \tilde{P}_1 } f(z_1) | =| \langle P_1, f\rangle - \langle \tilde{P}_1, f\rangle |,\] 
where both $P_1,\tilde{P}_1$ and $f$ are interpreted as vectors indexed by $z_1$, and $\langle\cdot,\cdot\rangle$ is the usual inner product. Let $\mathbf{1}$ be the all one vector with the same dimension of $P_1, \tilde{P}_1$ and $f$. Let $m$ and $M$ be the minimum and maximum value of $f$ respectively. Then, 
		 \begin{align*}
		 	&	 | \langle P_1, f\rangle - \langle \tilde{P}_1, f\rangle | =  | \langle P_1 - \tilde{P}_1, f - \frac{M+m}{2}\mathbf{1}\rangle | \\
		 		 &\leq \Vert P_1 - \tilde{P}_1\Vert_1 \Vert f - \frac{M+m}{2}\mathbf{1}\Vert_\infty = \frac{M-m}{2} \Vert P_1 - \tilde{P}_1\Vert_1 = \TV(P_1,\tilde{P}_1) \delta_1 (f) .
		 \end{align*}
As a result, the statement is true for $|V|=1$. Suppose the statement is true for $|V|=n-1$. Then, for $|V|=n$, we use $z_{2:n}$ to denote $(z_2,\ldots,z_n)$ and use $P_{2:n}$ to denote the product distribution $P_{2:n}(z_2,\ldots, z_n) = \prod_{i=2}^n P_i (z_i)$; $\tilde{P}_{2:n}$ is defined similarly. Then,
\begin{align*}
|\E_{z\sim P} f(z) - \E_{z\sim \tilde{P}} f(z) | &= |\E_{z_1\sim P_1} \E_{z_{2:n}\sim P_{2:n}} f(z_1, z_{2:n}) - \E_{z_1\sim \tilde{P}_1} \E_{z_{2:n}\sim \tilde P_{2:n}} f(z_1, z_{2:n}) | \\
&\leq  |\E_{z_1\sim P_1} \E_{z_{2:n}\sim P_{2:n}} f(z_1, z_{2:n}) - \E_{z_1\sim {P}_1} \E_{z_{2:n}\sim \tilde P_{2:n}} f(z_1, z_{2:n}) |\\
&\quad + |\E_{z_1\sim P_1} \E_{z_{2:n}\sim \tilde P_{2:n}} f(z_1, z_{2:n}) - \E_{z_1\sim \tilde{P}_1} \E_{z_{2:n}\sim \tilde P_{2:n}} f(z_1, z_{2:n}) |\\
&\leq  \E_{z_1\sim P_1} |\E_{z_{2:n}\sim P_{2:n}} f(z_1, z_{2:n}) -  \E_{z_{2:n}\sim \tilde P_{2:n}} f(z_1, z_{2:n}) |\\
&\quad + |\E_{z_1\sim P_1} \bar{f}(z_1)- \E_{z_1\sim \tilde{P}_1} \bar{f}(z_1) |,
\end{align*}
where we have defined $\bar{f}(z_1) = \E_{z_{2:n}\sim \tilde P_{2:n}} f(z_1, z_{2:n})   $. Fixing $z_1$, we have by induction assumption, 
\[ |\E_{z_{2:n}\sim P_{2:n}} f(z_1, z_{2:n}) - \E_{z_{2:n}\sim \tilde P_{2:n}} f(z_1, z_{2:n}) | \leq \sum_{i=2}^n \TV(P_i,\tilde{P}_i) \delta_i(f(z_1,\cdot)) \leq\sum_{i=2}^n \TV(P_i,\tilde{P}_i) \delta_i(f) .   \]
Further, we have,
\begin{align*}
\delta_1 (\bar{f}) &= \sup_{z_1,z_1'}  |\E_{z_{2:n}\sim \tilde P_{2:n}} f(z_1, z_{2:n})  - \E_{z_{2:n}\sim \tilde P_{2:n}} f(z_1', z_{2:n}) | \\
& \leq  \sup_{z_1,z_1'}  \E_{z_{2:n}\sim \tilde P_{2:n}} |f(z_1, z_{2:n})  -f(z_1', z_{2:n}) |  \\
&\leq \sup_{z_1,z_1'}  \sup_{z_{2:n}} |f(z_1, z_{2:n})  -f(z_1', z_{2:n}) | = \delta_1(f).
\end{align*}
Combining these results, we have
\begin{align*}
|\E_{s\sim P} f(s) - \E_{s\sim \tilde{P}} f(s) | 
&\leq  \E_{z_1\sim P_1} \sum_{i=2}^n \TV(P_i,\tilde{P}_i) \delta_i(f)     + \TV(P_1,\tilde{P}_1) \delta_1(\bar{f})\\
&\leq \sum_{i=1}^n \TV(P_i,\tilde{P}_i) \delta_i(f)  .
\end{align*}
So the induction is finished and the proof of Lemma~\ref{lem:diff_dist_f} is concluded.
\qed
\subsection{Proof of Corollary~\ref{cor:discounted}}\label{subsec:discounted} 
In the $\gamma$-discounted case, the $Q$-function is defined as \citep{qu2019scalable}, 
\begin{align}
    Q_i^\theta(s,a) =  \E_{a(t) \sim \zeta^{\theta}(\cdot|s(t))} \bigg[ \sum_{t=0}^\infty \gamma^t r_i(s_i(t),a_i(t) ) \bigg|s(0) = s,a(0) = a \bigg]. \label{eq:discounted_q}
\end{align}

For notational simplicity, denote $s = (s_{\nik},s_{\nminusik})$, $a = (a_{\nik}, a_{\nminusik})$; $s'= (s_{\nik},s_{\nminusik}')$ and $a' = (a_{\nik}, a_{\nminusik}')$. Let $\pi_{t,i}$ be the distribution of $(s_i(t),a_i(t))$ conditioned on $(s(0),a(0)) = (s,a)$ under policy $\theta$, and let $\pi_{t,i}'$ be the distribution of $(s_i(t),a_i(t))$ conditioned on $(s(0),a(0))=(s',a')$ under policy $\theta$. Then, under the conditions of \Cref{thm:exp_decay}, we can use equation~\eqref{eq:exp_decay:pi_difference} in the proof of Theorem~\ref{thm:exp_decay} (also see Lemma~\ref{lem:markov_chain_exp_decay}), which still holds in the discounted setting as equation~\eqref{eq:exp_decay:pi_difference} is a property of the underlying Markov chain, irrespective of how the objective is defined. This leads to,
\[\TV(\pi_{t,i},\pi_{t,i}') = 0 \quad \text{for}\quad t\leq \khop,\qquad \TV(\pi_{t,i},\pi_{t,i}') \leq  \rho^t \quad \text{for}\quad t>\khop.\]

With these preparations, we verify the exponential decay property. We expand the definition of $Q_i^\theta$ in \eqref{eq:discounted_q},
\begin{align*}
   &|Q_i^\theta(s,a) - Q_i^\theta(s',a')| \nonumber \\ &\leq   \sum_{t=0}^\infty \bigg| \E \big[\gamma^t r_i(s_i(t),a_i(t))  \big|(s(0),a(0)) = (s,a) \big] - \E \big[\gamma^t r_i(s_i(t),a_i(t))  \big|(s(0),a(0)) = (s',a') \big] \bigg| \nonumber \\
  & = \sum_{t=0}^\infty \bigg| \gamma^t \E_{(s_i,a_i)\sim \pi_{t,i}} r_i(s_i,a_i)  -\gamma^t \E_{(s_i,a_i)\sim \pi_{t,i}'} r_i(s_i,a_i) \bigg|\nonumber \\
  &\leq \sum_{t=0}^\infty \gamma^t   \bar{r} \text{TV}( \pi_{t,i},\pi_{t,i}')   \leq \sum_{t=\khop+1}^\infty  \gamma^t   \bar{r} \rho^t \leq  \frac{\bar{r}}{1-\gamma\rho } (\gamma\rho)^{\khop+1}.
\end{align*}
The above inequality shows that the $(\frac{\bar{r}}{1-\rho\gamma},\rho\gamma)$-exponential decay property holds and concludes the proof of Corollary~\ref{cor:discounted}. 

\subsection{Numerical Validation of the Exponential Decay Property}\label{sec:example_exp_decay}
In this subsection, we conduct numerical experiments to show that the exponential decay property holds broadly for randomly generated problem instances.

We consider a line graph with $n = 10$ nodes, local state space size $|\mathcal{S}_i| = 2$, local action space size $|\mathcal{A}_i| =3$. 
We generate the local transition probabilities $P_i$, localized polices $\zeta_i$ and local rewards $r_i$ uniformly randomly with maximum reward set to be $1$. 
To verify the exponential decay property, we consider Definition~\ref{def:exp_decay}, where we pick $i$ to be the left most node in the line, generate $s, s',a,a'$ uniformly random in the global state or action space, and then increase $\khop$ from $0$ to $n-2$. 
For each $\khop$, we calculate the left hand side of \eqref{eq:exp_decay_def} exactly through brutal force. We repeat the above procedure 100 times, each time with a newly generated instance, and plot the left hand side of \eqref{eq:exp_decay_def} as a function of $\kappa$ in Figure~\ref{fig:exp_decay_numerical_line}. 

We do a similar experiment on a $2\times 6$ 2D grid, with a similar setup except node $i$ is now selected as the corner node in the grid. The results are shown in Figure~\ref{fig:exp_decay_numerical_grid}. Both Figure~\ref{fig:exp_decay_numerical_line} and Figure~\ref{fig:exp_decay_numerical_grid} confirm that the left hand side of \eqref{eq:exp_decay_def} decay exponentially in $\kappa$. This shows that the exponential decay property holds broadly for instances generated randomly. \lina{should we discuss the $\rho$?} \adam{I'm okay either way}

\begin{figure}
    \centering
    \begin{subfigure}[t]{0.5\textwidth}
        \includegraphics[width=\textwidth]{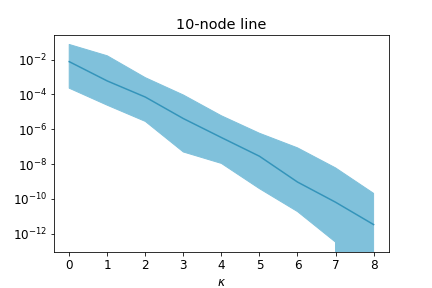}
        \caption{}\label{fig:exp_decay_numerical_line}
    \end{subfigure}~
    \begin{subfigure}[t]{0.5\textwidth}
        \includegraphics[width=\textwidth]{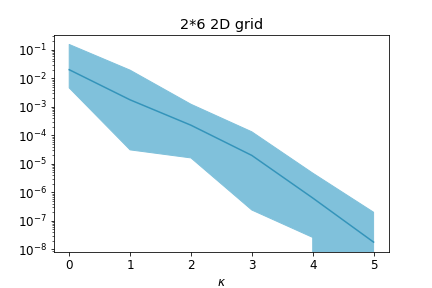}
         \caption{}\label{fig:exp_decay_numerical_grid}
    \end{subfigure}
    
    \caption{Numerical verification of the exponential decay property. The $y$-axis is the left hand side of~\eqref{eq:exp_decay_def} whereas the $x$-axis is $\kappa$. The solid line represents the median value of different runs, whereas the shaded region represents 10\% to 90\% percentile of the runs. }
    \label{fig:exp_decay_numerical}
\end{figure}

\section{Analysis of the Critic}\label{sec:critic}

The goal of the section is to analyze the critic update (line~\ref{algo:critic_1} and \ref{algo:critic_2} in Algorithm~\ref{algorithm:sac}). Our algorithm is a two-time scale algorithm, where the critic runs faster than the actor policy parameter $\theta(t)$. Therefore, in what follows, we show that the truncated $Q$-function in the critic $\hat{Q}_i^t$ ``tracks'' a quantity $\hat{Q}_i^{\theta(t)}$, which is the fixed point of the critic update when the policy is ``frozen'' at $\theta(t)$. Further, we show that this fixed point is a good approximation of the true $Q$ function $Q_i^{\theta(t)}$ for policy $\theta(t)$ 
because of the exponential decay property. The formal statement is given in Theorem~\ref{thm:critic}.

\begin{theorem}\label{thm:critic}
The following two statements are true. 
\begin{itemize}
    \item [(a)] For each $i$ and $\theta$, there exists $\hat{Q}_i^{\theta} \in\R^{\hat{\mathcal{Z}}_{\nik}}$ which is an approximation of the true $Q$ function in the sense that, there exists scalar $c_i^\theta$ that depends on $\theta$, such that
\begin{align}
    \sqrt{\E_{z\sim \sd^\theta} |\hat{Q}_i^\theta(z_{\nik}) + c_i^\theta - Q_i^\theta(z)|^2   } \leq \frac{c\rhok}{1-\mu_D},
\end{align}
where $\hat{Q}_i^\theta(\tilde{z}_{\nik})$ is understood as $0$. 
\item[(b)] For each $i$, almost surely $\sup_{t\geq 0} \Vert \hat{Q}_i^t\Vert_\infty <\infty $. Further, $\hat{Q}_i^t$ tracks $\hat{Q}_i^{\theta(t)}$ in the sense that almost surely, $\lim_{t\rightarrow\infty} \hat{Q}_i^t - \hat{Q}_i^{\theta(t)} = 0$. 
\end{itemize}
\end{theorem}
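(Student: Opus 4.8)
The plan for \emph{part (a)} is to exhibit $\hat{Q}_i^\theta$ as the relative value function of an auxiliary Markov chain on the reduced space $\mathcal{Z}_{\nik}$ and then bound its distance to $Q_i^\theta$ using the exponential decay property (Lemma~\ref{lem:truncated_pg}) together with the mixing estimate in Assumption~\ref{assump:ergodicity}(b). Let $\Pi_i^\theta:\R^{\mathcal{Z}}\to\R^{\mathcal{Z}}$ be the orthogonal projection, in the $\SD^\theta$-weighted inner product, onto vectors depending on $z$ only through $z_{\nik}$; explicitly $(\Pi_i^\theta g)(z_{\nik}) = \E_{z\sim\sd^\theta}[\,g(z)\mid z_{\nik}\,]$, which is well defined because $\sd^\theta(z_{\nik})>0$ by Assumption~\ref{assump:ergodicity}(a). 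Define the reduced kernel $\bar{P}_i^\theta := \Pi_i^\theta P^\theta$ acting on functions of $z_{\nik}$ (this amounts to averaging the ``boundary'' coordinates according to the stationary conditional distribution, and is a genuine stochastic matrix on $\mathcal{Z}_{\nik}$). The marginal $\hat{\sd}_i^\theta$ of $\sd^\theta$ is then stationary for $\bar{P}_i^\theta$, and since $\Pi_i^\theta$ is a non-expansion in $\|\cdot\|_{\SD^\theta}$ and fixes $\mathbf{1}$, Assumption~\ref{assump:ergodicity}(b) transfers: $\|\bar{P}_i^\theta - \mathbf{1}(\hat{\sd}_i^\theta)^\top\|_{\diag(\hat{\sd}_i^\theta)} \leq \mu_D$. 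Hence the affine map $g\mapsto r_i - J_i(\theta)\mathbf{1} + \bar{P}_i^\theta g$ is a $\mu_D$-contraction on the subspace of $\hat{\sd}_i^\theta$-mean-zero vectors; let $\bar{Q}$ be its unique fixed point there, set $c_i^\theta := \bar{Q}(\tilde{z}_{\nik})$ and $\hat{Q}_i^\theta := \bar{Q} - c_i^\theta\mathbf{1}$ so that $\hat{Q}_i^\theta(\tilde{z}_{\nik})=0$. This $\bar{Q} = \hat{Q}_i^\theta + c_i^\theta$ is exactly the limit the critic recursion converges to under a frozen policy $\theta$ (the standard fixed point of average-reward TD, cf.\ \citet{tsitsiklis1999average}).

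For the approximation bound, recall $Q_i^\theta$ is the $\sd^\theta$-mean-zero solution of $Q_i^\theta = r_i - J_i(\theta)\mathbf{1} + P^\theta Q_i^\theta$. Applying $\Pi_i^\theta$ (which fixes $r_i$ and $\mathbf{1}$), subtracting the reduced Bellman equation $\bar{Q} = r_i - J_i(\theta)\mathbf{1} + \bar{P}_i^\theta\bar{Q}$, and using $\bar{P}_i^\theta\bar{Q} = \Pi_i^\theta P^\theta\bar{Q}$ together with the fact that $Q_i^\theta - \bar{Q}$ is $\sd^\theta$-mean-zero, one gets
\[ \Pi_i^\theta Q_i^\theta - \bar{Q} = \Pi_i^\theta\big(P^\theta - \mathbf{1}(\sd^\theta)^\top\big)\big(Q_i^\theta - \bar{Q}\big). \]
Writing $Q_i^\theta - \bar{Q} = (Q_i^\theta - \Pi_i^\theta Q_i^\theta) + (\Pi_i^\theta Q_i^\theta - \bar{Q})$ and taking $\|\cdot\|_{\SD^\theta}$ yields $\|\Pi_i^\theta Q_i^\theta - \bar{Q}\|_{\SD^\theta} \leq \frac{\mu_D}{1-\mu_D}\,\|Q_i^\theta - \Pi_i^\theta Q_i^\theta\|_{\SD^\theta}$. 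Finally $\Pi_i^\theta Q_i^\theta$ is a truncated $Q$-function of the form \eqref{eq:truncated_q} (with weights $w_i(\cdot;z_{\nik}) = \sd^\theta(\cdot\mid z_{\nik})$), so Lemma~\ref{lem:truncated_pg}(a) gives $\|Q_i^\theta - \Pi_i^\theta Q_i^\theta\|_{\SD^\theta}\leq c\rhok$, and the triangle inequality produces $\|\hat{Q}_i^\theta + c_i^\theta - Q_i^\theta\|_{\SD^\theta}\leq \frac{c\rhok}{1-\mu_D}$, which is the claim of part (a).

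For \emph{part (b)} the plan is to treat the critic as a linear stochastic approximation with Markovian noise and a slowly drifting parameter and invoke the two-time-scale machinery of \citet{konda2003linear}. First, the rescaling $\Gamma(\mathbf{\hat{Q}}^t)$ guarantees $\|\theta(t+1)-\theta(t)\| = O(\eta_t)$ a priori, regardless of the magnitude of $\mathbf{\hat{Q}}^t$, so by Assumption~\ref{assump:stepsize} the policy moves on a strictly slower timescale than $\alpha_t$. Next I would show $\hat{\mu}_i^t - J_i(\theta(t))\to 0$ a.s.: $\hat{\mu}_i^t$ is an exponential running average of the rewards $r_i(s_i(t),a_i(t))$ along the uniformly ergodic chain of Assumption~\ref{assump:ergodicity}, whose law drifts slowly, so it tracks the instantaneous stationary mean $\E_{\sd^{\theta(t)}}r_i = J_i(\theta(t))$. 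Given this, the $\hat{Q}_i^t$-recursion on $\hat{\mathcal{Z}}_{\nik}$ is a linear SA whose frozen mean field is the affine map of part (a) with the dummy coordinate deleted; the key stability input is that this map's system matrix is Hurwitz, uniformly in $\theta$, which is again the negative-definiteness that follows from $\|\bar{P}_i^\theta - \mathbf{1}(\hat{\sd}_i^\theta)^\top\|\leq\mu_D<1$ (the standard argument for average-reward TD). Standard two-time-scale arguments then deliver $\sup_{t\geq 0}\|\hat{Q}_i^t\|_\infty<\infty$ a.s.\ and $\hat{Q}_i^t - \hat{Q}_i^{\theta(t)}\to 0$ a.s.

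I expect the main obstacle to be the part (a) step of identifying the correct auxiliary chain $\bar{P}_i^\theta$ and verifying it inherits the mixing bound $\mu_D$ --- the difficulty is that the reduced process $z_{\nik}(t)$ is itself not Markov, so one must pass to the boundary-averaged kernel and argue that the orthogonal projection $\Pi_i^\theta$ does not expand the $\SD^\theta$-norm; once this is in place the contraction-and-projection estimate is routine. In part (b) the only non-mechanical point is handling the Markovian noise, the drifting policy, and the dummy-state normalization simultaneously, all of which are by now standard in the two-time-scale actor--critic literature.
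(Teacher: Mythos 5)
Your proposal is correct in substance and follows the same overall route as the paper: part (a) is essentially the paper's Lemma~\ref{lem:critic:fixed_point} — your $\Pi_i^\theta$ is exactly the paper's conditional-expectation projection, your contraction of the reduced Bellman map with modulus $\mu_D$ on the mean-zero subspace is the paper's Claim~2, and your triangle-inequality bookkeeping ($\Vert \Pi_i^\theta Q_i^\theta - \bar{Q}\Vert_{\SD^\theta} \leq \tfrac{\mu_D}{1-\mu_D}\Vert Q_i^\theta - \Pi_i^\theta Q_i^\theta\Vert_{\SD^\theta}$ plus the exponential-decay bound on the projection error) is equivalent to the paper's direct recursion on $\tilde{\Phi}_i\tilde{Q}_i^\theta - Q_i^\theta$ and yields the same constant $\tfrac{c\rhok}{1-\mu_D}$. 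For part (b) you invoke the same two-time-scale machinery of Konda, but with a different decomposition: you analyze $\hat{\mu}_i^t$ separately (legitimate, since its recursion is autonomous) and then treat the $\hat{Q}_i^t$ recursion with the residual $\hat{\mu}_i^t - J_i(\theta(t))$ as a vanishing perturbation, whereas the paper stacks $(\rescale\hat{\mu}_i^t,\hat{Q}_i^t)$ into a single linear stochastic approximation and chooses the rescaling $\rescale$ precisely so that the joint matrix $\bar{G}_i(\theta)$ is uniformly positive definite (Lemma~\ref{lem:critic:positive_definite}), which lets Theorem~\ref{thm:SA} be applied verbatim; your cascade version would need either this re-stacking or an explicit argument that the standard theorem tolerates the time-varying offset. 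Two smaller points where your sketch is lighter than the paper: the uniform Hurwitz/negative-definiteness claim needs Assumption~\ref{assump:ergodicity}(a) (the lower bound $\sigma$ on $\sd^\theta$) in addition to $\mu_D<1$ — the paper's modulus is $\tfrac12(1-\mu_D)\sigma^2$ — and the identification of your $\bar{Q}-c_i^\theta\mathbf{1}$ with the frozen fixed point of the dummy-pinned recursion is asserted rather than proved (the paper's Claim~1 shows the pinned system and the mean-zero projected Bellman equation have the same solutions up to a constant shift). Finally, be aware that most of the paper's proof length goes into verifying the hypotheses of the stochastic-approximation theorem (boundedness and Lipschitz continuity in $\theta$ of the Poisson-equation solutions, martingale noise, moment bounds; Lemmas~\ref{lem:critic:Gh_bounded}--\ref{lem:critic:lipschitz}), which your plan defers as standard; that deferral is reasonable for a proposal but it is where the real work lies.
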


Our proof relies on the result on two-time scale stochastic approximation in \citet{konda2003linear}. In \Cref{sec:sa}, we review the result in \citet{konda2003linear} and in \Cref{subsec:proof_critic}, we provide the proof for Theorem~\ref{thm:critic}. 

\subsection{Review of A Stochastic Approximation Result}\label{sec:sa}
In this subsection, we review a result on two time-scale stochastic approximation in \cite{konda2003linear} which will be used in our proof for Theorem~\ref{thm:critic}. Consider the following iterative stochastic approximation scheme with iterate $x^t \in \R^{m}$,\footnote{Our stochastic approximation scheme \eqref{eq:stochastic_approximation} is slightly different from \citet{konda2003linear} in that in \citet{konda2003linear}, $h^{\theta(t)}(\cdot)$ and $G^{\theta(t)}(\cdot)$ depend on $z(t+1)$ instead of $z(t)$. This change is without loss of generality as we can group two states togethoer, i.e. $y(t) = (z(t-1),z(t))$ and write our algorithm in the form of \citet{konda2003linear}.}
\begin{subequations}\label{eq:stochastic_approximation}
\begin{align}
        x^{t+1} &= x^{t} + \alpha_t(h^{\theta(t)}(z(t)) - G^{\theta(t)}(z(t)) x^t + \xi^{t+1} x^t),  \\
    \theta(t+1) &= \theta(t) + \eta_t H^{t+1},
    \end{align}
\end{subequations}
where $z(t)$ is a stochastic process with finite state space $\mathcal{Z}$; $h^\theta(\cdot):\mathcal{Z}\rightarrow \R^m, G^\theta(\cdot):\mathcal{Z}\rightarrow \R^{m\times m}$ are vectors or matrices depending on both parameter $\theta$ as well as the state $z$; $\xi^{t+1} \in\R^{m\times m}$ and $H^{t+1}$ is some vector that drives the change of $\theta(t)$. 

In what follows, we state Assumption \ref{assump:sa1} to \ref{assump:sa6} used in \citet{konda2003linear}. Assumption~\ref{assump:sa1} is related to the summability of the step size $\alpha_t$.

\begin{assumption}\label{assump:sa1}
The step size is deterministic, nonincreasing, and satisfies $\sum_t \alpha_t = \infty, \sum_t \alpha_t^2 <\infty.$
\end{assumption}

Let $\mathcal{F}_t$ be the $\sigma$ algebra generated by $\{z(k), H^k, x^k,\theta(k)\}_{k\leq t}$. Assumption~\ref{assump:sa2} says that the stochastic process $z(t)$ is Markovian and is driven by a transition kernal that depends on $\theta(t)$.  
\begin{assumption}\label{assump:sa2} There exists a parameterized family of transition kernels $P^\theta$ on state space $\mathcal{Z}$ such that, for every $A\subset \mathcal{Z}$, $\PR(z(t+1)\in A|\mathcal{F}_t)= \PR(z(t+1)\in A|z(t),\theta(t)) = P^{\theta(t)}(z(t+1)\in\mathcal{A}|z(t))$.
\end{assumption}

Assumption~\ref{assump:sa3} is a technical assumption on the transition kernel $P^\theta$ as well as $h^\theta$, $G^\theta$. 
\begin{assumption}\label{assump:sa3}
For each $\theta$, there exists function $\bar{h}(\theta)\in\R^m, \bar{G}(\theta)\in \R^{m\times m}$, $\hat{h}^\theta: \mathcal{Z}\rightarrow\R^m, \hat{G}^\theta: \mathcal{Z}\rightarrow \R^{m\times m} $ that satisfy the following. \begin{itemize}
    \item [(a)] For all $z\in\mathcal{Z}$,
    \begin{align*}
        \hat{h}^\theta(z) &= h^\theta(z) - \bar{h}(\theta)   + [P^\theta \hat{h}^\theta](z),\\
        \hat{G}^\theta(z) &= G^\theta(z) - \bar{G}(\theta) + [P^\theta \hat{G}^\theta ](z),
    \end{align*}
    where $P^\theta \hat{h}^\theta$ is a map from $\mathcal{Z}$ to $\R^m$ given by $[P^\theta \hat{h}^\theta](z) = \E_{z'\sim P^\theta(\cdot|z)} \hat{h}^\theta(z')$; similarly, $P^\theta \hat{G}^\theta $ is given by  $[P^\theta \hat{G}^\theta](z) =\E_{z'\sim P^\theta(\cdot|z)} \hat{G}^\theta(z') $.
    \item[(b)] For some constant $C$, 
    $\max(\Vert \bar{h}(\theta)\Vert, \Vert \bar{G}(\theta)\Vert)\leq C$ for all $\theta$.
    \item [(c)] For any $d>0$, there exists $C_d>0$ such that
    $\sup_t \E \Vert f^{\theta(t)}(z(t))\Vert^d \leq C_d $
    where $f^\theta$ represents any of the functions $\hat{h}^\theta, h^\theta, \hat{G}^\theta, G^\theta$. 
    \item[(d)] For some constant $C>0$ and for all $\theta,\bar{\theta}$,
    \[\max(\Vert \bar{h}(\theta) - \bar{h}(\bar{\theta})\Vert, \Vert \bar{G}(\theta) - \bar{G}(\bar{\theta})\Vert) \leq C\Vert \theta - \bar{\theta}\Vert.\]
    \item[(e)] There exists a positive constant $C$ such that for each $z\in\mathcal{Z}$, 
    \[\Vert P^\theta f^\theta (z) - P^{\bar{\theta}} f^{\bar{\theta}}(z) \Vert \leq C \Vert \theta - \bar{\theta}\Vert, \]
    where $f^\theta$ is any of the function $\hat{h}^\theta$ and $\hat{G}^\theta$. 
\end{itemize}
\end{assumption}
The next Assumption~\ref{assump:sa4} is to ensure that $\theta(t)$ changes slowly by imposing a bound on $H^t$ and requiring step size $\eta_t$ to be much smaller than $\alpha_t$.
\begin{assumption} \label{assump:sa4}The process $H^t$ satisfies
$ \sup_t \E |H^t|^d<\infty $
for all $d$. Further, the sequence $\eta_t$ is deterministic and satisfies 
$ \sum_t \big(\frac{\eta_t}{\alpha_t}  \big)^d <\infty$
for some $d>0$.
\end{assumption}

Assumption~\ref{assump:sa5} says that the $\xi^{t}$ is a martingale difference sequence.
\begin{assumption}\label{assump:sa5} $\xi^t$ is an $m\times m$ matrix valued $\mathcal{F}_t$-martingale difference, with bounded momemnts, i.e. 
\[\E \xi^{t+1}| \mathcal{F}_t = 0,\quad \sup_{t} \E\Vert \xi^{t+1}\Vert^d <\infty,\]
for each $d>0$.
\end{assumption}

The final Assumption~\ref{assump:sa6} requires matrix $\bar{G}(\theta)$ to be uniformly positive definite. 
\begin{assumption}[Uniform Positive Definiteness]\label{assump:sa6} There exists $a>0$ s.t. for all $x\in\R^m$and $\theta$, we have
\[x^\top \bar{G}(\theta) x \geq a\Vert x\Vert^2.\]
\end{assumption}
With the above assumtions, \citet[Lem. 12, Thm. 7]{konda2003linear} shows that the following theorem holds.
\begin{theorem}[\cite{konda2003linear}]\label{thm:SA}
Under Assumption~\ref{assump:sa1}-\ref{assump:sa6}, with probability $1$, $\sup_{t\geq 0} \Vert x^t\Vert <\infty$ and 
\[\lim_{t\rightarrow\infty} \Vert x^t - \bar{G}(\theta(t))^{-1} \bar{h}(\theta(t)) \Vert = 0. \]
\end{theorem}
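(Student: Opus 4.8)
Theorem~\ref{thm:SA} is, up to one cosmetic relabeling, a verbatim combination of Lemma~12 and Theorem~7 of \citet{konda2003linear}: Assumptions~\ref{assump:sa1}--\ref{assump:sa6} were stated precisely so as to coincide with the hypotheses under which those two results are established there (Lemma~12 gives the almost sure boundedness $\sup_{t\geq0}\Vert x^t\Vert<\infty$, and Theorem~7 gives the almost sure convergence $\Vert x^t-\bar G(\theta(t))^{-1}\bar h(\theta(t))\Vert\to 0$). Hence the plan is simply to check that the recursion \eqref{eq:stochastic_approximation} fits the template of \citet{konda2003linear} and then invoke those results. The only point that needs comment is that in \citet{konda2003linear} the maps $h^{\theta(t)},G^{\theta(t)}$ are evaluated at $z(t+1)$ rather than at $z(t)$; as the footnote records, this is immaterial once one augments the state to $y(t)=(z(t-1),z(t))$, which is still a finite-state process driven by a $\theta(t)$-parameterized kernel, so the augmented recursion is literally of the form treated in \citet{konda2003linear}.

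For a self-contained exposition I would recall the two ideas behind \citet{konda2003linear}. First, for \emph{boundedness}: use the Poisson-equation solutions $\hat h^\theta,\hat G^\theta$ supplied by Assumption~\ref{assump:sa3}(a) to split the Markov (non-martingale) noise $h^{\theta(t)}(z(t))-\bar h(\theta(t))$ (and similarly for $G$) into a martingale-difference part, a telescoping part $\hat h^{\theta(t)}(z(t))-\hat h^{\theta(t+1)}(z(t+1))$, and a remainder of size $O(\Vert\theta(t+1)-\theta(t)\Vert)=O(\eta_t|H^{t+1}|)$; after this substitution the $x$-recursion reads $x^{t+1}=x^t+\alpha_t(\bar h(\theta(t))-\bar G(\theta(t))x^t)+(\text{noise})$ up to lower order, and the uniform positive definiteness of $\bar G(\theta)$ (Assumption~\ref{assump:sa6}) together with the moment bounds (Assumptions~\ref{assump:sa3}(c), \ref{assump:sa5}, \ref{assump:sa4}) makes $\Vert x^t\Vert^2$ behave like a supermartingale outside a bounded set, yielding $\sup_t\Vert x^t\Vert<\infty$ a.s. Second, for \emph{convergence}: on the fast time scale $\theta(t)$ is quasi-static because $\sum_t(\eta_t/\alpha_t)^d<\infty$ (Assumption~\ref{assump:sa4}); freezing $\theta(t)\approx\theta$, the recursion is a linear stochastic approximation whose limiting ODE $\dot x=\bar h(\theta)-\bar G(\theta)x$ has $\bar G(\theta)^{-1}\bar h(\theta)$ as a globally exponentially stable equilibrium (again by Assumption~\ref{assump:sa6}), and the Lipschitz dependence of $\bar h,\bar G,P^\theta\hat h^\theta,P^\theta\hat G^\theta$ on $\theta$ (Assumption~\ref{assump:sa3}(d),(e)) controls the error from the slow drift, so $x^t$ tracks the moving target $\bar G(\theta(t))^{-1}\bar h(\theta(t))$ and their difference vanishes.

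The hard part --- and the reason the argument in \citet{konda2003linear} is nontrivial --- is handling simultaneously the Markov (state-dependent) noise, which forces the Poisson-equation bookkeeping and the $d$-th moment estimates of Assumption~\ref{assump:sa3}(c), and the two-time-scale coupling between $x^t$ and $\theta(t)$, which must be absorbed through the step-size separation in Assumption~\ref{assump:sa4}; neither difficulty arises in the textbook i.i.d., single-time-scale setting. Since these obstacles are exactly what \citet{konda2003linear} resolves under the stated assumptions, I would not reprove them here and would instead cite that reference directly.
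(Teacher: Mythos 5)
Your proposal matches the paper exactly: the paper does not prove Theorem~\ref{thm:SA} but imports it directly from \citet[Lem.~12, Thm.~7]{konda2003linear}, with Assumptions~\ref{assump:sa1}--\ref{assump:sa6} stated to mirror that reference's hypotheses and the same footnoted remark that the evaluation of $h^{\theta(t)},G^{\theta(t)}$ at $z(t)$ rather than $z(t+1)$ is handled by augmenting the state to $(z(t-1),z(t))$. Your additional sketch of the Poisson-equation and two-time-scale arguments is accurate background but goes beyond what the paper itself provides, which is simply the citation.
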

In the next subsection, we will use the stochastic approximation result here to provide a proof of Theorem~\ref{thm:critic}.

\subsection{Proof of Theorem~\ref{thm:critic}}\label{subsec:proof_critic}
In this subsection, we will write our algorithm in the form of the stochastic approximation scheme~\eqref{eq:stochastic_approximation} and provide a proof of Theorem~\ref{thm:critic}. Throughout the rest of the section, we fix $i\in\mathcal{N}$. 

Define $\mathbf{e}_{z_{\nik}}\in\R^{\hat{Z}_\nik} $ to be the unit vector in $\mathbb{R}^{\hat{\mathcal{Z}}_{\nik}}$ when $z_\nik\neq \tilde{z}_{\nik}$, and is the zero vector when $z_{\nik} = \tilde{z}_\nik$ (the dummy state-action pair). Then, one can check that the critic part of our algorithm (line~\ref{algo:critic_1} and \ref{algo:critic_2} in Algorithm~\ref{algorithm:sac}) can be rewritten as,
\begin{align}
\hat{\mu}_i^{t+1} &= \hat{\mu}_i^{t} +  \alpha_t [r_i(z_i(t)) - \hat{\mu}_i^{t}],\label{eq:critic_update_1}\\
\hat{Q}_i^{t+1} &= \hat{Q}_i^{t} + \alpha_t[r_i(z_i(t)) - \hat{\mu}_i^t + \mathbf{e}_{z_{\nik}(t+1)}^\top \hat{Q}_i^{t} - \mathbf{e}_{z_{\nik}(t)}^\top  \hat{Q}_i^{t}] \mathbf{e}_{z_{\nik}(t)} .\label{eq:critic_update_2}
\end{align}
When written in vector form, the above equation becomes
\[\left[ \begin{array}{c}
\hat{\mu}_i^{t+1} \\
\hat{Q}_i^{t+1} 
\end{array} \right] = \left[ \begin{array}{c}
\hat{\mu}_i^{t} \\
\hat{Q}_i^{t} 
\end{array} \right] + \alpha_{t} \bigg[ - \left[ \begin{array}{cc}
 1 & 0\\
 \mathbf{e}_{z_{\nik}(t)}  & \mathbf{e}_{z_{\nik}(t)} [  \mathbf{e}_{z_{\nik}(t)}^\top- \mathbf{e}_{z_{\nik}(t+1)}^\top  ]
    \end{array}\right] \left[ \begin{array}{c}
\hat{\mu}_i^{t} \\
\hat{Q}_i^{t} 
\end{array} \right]  + \left[\begin{array}{c}
 r_i(z_i(t))\\
\mathbf{e}_{z_{\nik}(t)}  r_i(z_i(t))
  \end{array}
  \right] \bigg].\]
We rescale the $\hat{\mu}_i^t$ coordinate by a factor of $\rescale$ for technical reasons to be clear later, and rewrite the above equation in an equivalent form, 
\[\left[ \begin{array}{c}
\rescale \hat{\mu}_i^{t+1} \\
\hat{Q}_i^{t+1} 
\end{array} \right] = \left[ \begin{array}{c}
\rescale \hat{\mu}_i^{t} \\
\hat{Q}_i^{t} 
\end{array} \right] + \alpha_{t} \bigg[ - \left[ \begin{array}{cc}
 1 & 0\\
\frac{1}{\rescale} \mathbf{e}_{z_{\nik}(t)}  & \mathbf{e}_{z_{\nik}(t)} [  \mathbf{e}_{z_{\nik}(t)}^\top- \mathbf{e}_{z_{\nik}(t+1)}^\top  ]
    \end{array}\right] \left[ \begin{array}{c}
\rescale \hat{\mu}_i^{t} \\
\hat{Q}_i^{t} 
\end{array} \right]  + \left[\begin{array}{c}
\rescale  r_i(z_i(t))\\
\mathbf{e}_{z_{\nik}(t)}  r_i(z_i(t))
  \end{array}
  \right] \bigg].\]
We define $x_i^t = [\rescale \hat{\mu}_i^t; \hat{Q}_i^t]$ and, 
\begin{align*}
    \tilde{G}_i(z,z') &= \left[ \begin{array}{cc}
 1 & 0\\
\frac{1}{\rescale} \mathbf{e}_{z_{\nik}}  & \mathbf{e}_{z_{\nik}} [  \mathbf{e}_{z_{\nik}}^\top- \mathbf{e}_{z_{\nik}'}^\top  ]
    \end{array}\right], \quad
   h_i(z)  = \left[\begin{array}{c}
\rescale  r_i(z_i)\\
\mathbf{e}_{z_{\nik}}  r_i(z_i)
  \end{array}
  \right]. 
\end{align*}
  With the above definitions, the critic update equation \eqref{eq:critic_update_1} and \eqref{eq:critic_update_2} can be rewritten as the following,
  \begin{align}
       x_i^{t+1} = x_i^t + \alpha_t\Big[-\tilde{G}_i(z(t),z(t+1)) x_i^t + h_i(z(t))\Big]. \label{eq:critic_update_vec}
  \end{align}
Let $P^{\theta}$ be the transition matrix and the state-action pair when the policy is $\theta$. Because at time $t$, the policy is $\theta(t)$, as such the transition matrix from $z(t)$ to $z(t+1)$ is $P^{\theta(t)}$. We define
\begin{align}
    G_i^{\theta}(z) = \E_{z'\sim P^\theta (\cdot|z) }\tilde{G}_i(z,z') = \left[ \begin{array}{cc}
 1 & 0\\
\frac{1}{\rescale} \mathbf{e}_{z_{\nik}}  & \mathbf{e}_{z_{\nik}} [   \mathbf{e}_{z_{\nik}}^\top- P^\theta(\cdot|z)\Phi_i ]
    \end{array} \right]   
\end{align}
where $P^\theta(\cdot|z)$ is understood as the $z$'th row of $P^\theta$ and is treated as a row vector. Also, we have defined $\Phi_i\in\R^{\mathcal{Z}\times \hat{\mathcal{Z}}_{\nik}}$ to be a matrix with each row indexed by $z\in \mathcal{Z}$ and each column indexed by $z_{\nik}' \in  \hat{\mathcal{Z}}_{\nik} $, and its entries are given by $\Phi_i(z,z_{\nik}')=1$ if $z_{\nik} =z_{\nik}'$ and  $\Phi_i(z,z_{\nik}')=0$ elsewhere. Then, \eqref{eq:critic_update_vec} can be rewritten as,
\begin{align}
    x_i^{t+1} = x_i^t + \alpha_t\Big[- G_i^{\theta(t)}(z(t))x_i^t + h_i(z(t)) + \underbrace{[ G_i^{\theta(t)}(z(t)) - \tilde{G}_i(z(t),z(t+1))  ]}_{:=\xi_i^{t+1}}x_i^t  \Big]. \label{eq:critic_update_vec_final}
\end{align}
This will correspond to the first equation in the stochastic approximation scheme \eqref{eq:stochastic_approximation} that we reviewed in \Cref{sec:sa}.
Further, the actor update can be written as,
\begin{align}
    \theta(t+1) = \theta(t) + \eta_t  \Gamma(\mathbf{\hat{Q}^t})  \hat{g}(t) . \label{eq:actor_update_vector}
\end{align}
with $    \hat{g}_i(t) =  \nabla_{\theta_i} \log \zeta_i^{\theta_i(t)}(a_i(t)|s_i(t)) \frac{1}{n} \sum_{j \in \nik}  \hat Q_j^t(z_{N_j}(t))$. We identify equation \eqref{eq:critic_update_vec_final} and \eqref{eq:actor_update_vector} with the stochastic approximation scheme in \eqref{eq:stochastic_approximation}, where $x_i^t, G_i^\theta, h_i,\xi_i^{t+1}, \Gamma(\mathbf{\hat{Q}^t})  \hat{g}(t)$ are identified with the $x^t, G^\theta, h^\theta, \xi^{t+1}, H^{t+1}$ in \eqref{eq:stochastic_approximation} respectively. In what follows, we will check all the assumptions (Assumption~\ref{assump:sa1} to \ref{assump:sa6}) in \Cref{sec:sa} and invoke Theorem~\ref{thm:SA}. 

To that end, we first define $\bar{G}_i(\theta), \bar{h}_i(\theta), \hat{G}_i^\theta(z), \hat{h}_i^\theta(z)$, which will be the solution to the Poisson equation in Assumption~\ref{assump:sa3}(a). Given $\theta$, recall the stationary distribution under policy $\theta$ is $\sd^\theta$ and matrix $\SD^\theta = \diag(\sd^\theta)$. We define,
\begin{align*}
    \bar{G}_i(\theta) &= \E_{z\sim \sd^\theta } G_i^\theta (z) 
    = \left[ \begin{array}{cc}
 1 & 0\\
\frac{1}{\rescale} \Phi_i^\top \sd^\theta  & \Phi_i^\top \SD^\theta \big[   \Phi_i - P^\theta\Phi_i \big] 
    \end{array}\right],  \\
 \bar{h}_i(\theta)    &=\E_{z\sim \sd^\theta} h_i(z) = \left[ \begin{array}{c}
      \rescale  (\sd^\theta)^\top r_i  \\
       \Phi_i^\top \SD^\theta r_i 
 \end{array}\right]   ,
\end{align*}
where in the last line, $r_i$ is understood as a vector over the entire state-action space $\mathcal{Z}$, (though it only depends on $z_i$). We also define,
\begin{align*}
    \hat{G}_i^\theta(z) &= \E_\theta [\sum_{t=0}^\infty [G_i^{\theta}(z(t)) - \bar{G}_i(\theta)  ] |z(0)=z],\\
   \hat{h}_i^\theta(z)  &= \E_\theta [\sum_{t=0}^\infty [h_i(z(t)) - \bar{h}_i(\theta)  ] |z(0)=z].
\end{align*}
It is easy to check that the above definitions will be the solution to the Poisson equation in Assumption~\ref{assump:sa3}(a). 

We will now start to check all the assumptions. We will frequently use the following auxiliary lemma, which is an immediate consequence of Assumption \ref{assump:ergodicity}. 

\begin{lemma} \label{lem:mixing}
Under Assumption~\ref{assump:ergodicity}, for vector $d\in\R^{\mathcal{Z}}$ such that $\mathbf{1}^\top d = 0$, we have, $\Vert ((P^\theta)^\top)^t d \Vert_1 \leq c_\infty \mu_D^t \Vert d \Vert_1$ for $c_\infty = \sqrt{\frac{|\mathcal{Z}|}{\sigma}}$.  
\end{lemma}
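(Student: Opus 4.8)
The plan is to route the bound through the weighted Euclidean norm dual to $\Vert\cdot\Vert_{D^\theta}$, namely $\Vert v\Vert_{(D^\theta)^{-1}} := \sqrt{v^\top (D^\theta)^{-1} v}$, which is the natural norm in which the adjoint $(P^\theta)^\top$ (acting on signed measures) contracts. The argument has three elementary pieces: a reduction of $((P^\theta)^\top)^t d$ to the centered operator, an operator-norm estimate obtained by duality from Assumption~\ref{assump:ergodicity}(b), and two norm comparisons against $\Vert\cdot\Vert_1$ using Assumption~\ref{assump:ergodicity}(a).

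First I would reduce to the centered operator $A := P^\theta - \mathbf{1}(\sd^\theta)^\top$. The subspace $V_0 := \{d\in\R^{\mathcal Z}:\mathbf 1^\top d = 0\}$ is invariant under $(P^\theta)^\top$, since for $d\in V_0$ we have $\mathbf 1^\top (P^\theta)^\top d = (P^\theta\mathbf 1)^\top d = \mathbf 1^\top d = 0$ (rows of $P^\theta$ sum to one), and on $V_0$ the operators $(P^\theta)^\top$ and $A^\top = (P^\theta)^\top - \sd^\theta\mathbf 1^\top$ coincide because $\sd^\theta\mathbf 1^\top d = 0$. $V_0$ is also $A^\top$-invariant since $A\mathbf 1 = \mathbf 1-\mathbf 1 = 0$. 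Hence, by induction on $t$, $((P^\theta)^\top)^t d = (A^\top)^t d$ for every $d\in V_0$, so it suffices to bound $\Vert (A^\top)^t d\Vert_1$.

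Second I would establish the operator bound $\Vert (A^\top)^t\Vert_{(D^\theta)^{-1}} \le \mu_D^t$. This uses the duality identity $\Vert M^\top\Vert_{(D^\theta)^{-1}} = \Vert (D^\theta)^{-1/2}M^\top (D^\theta)^{1/2}\Vert_2 = \Vert \big((D^\theta)^{1/2}M(D^\theta)^{-1/2}\big)^\top\Vert_2 = \Vert (D^\theta)^{1/2}M(D^\theta)^{-1/2}\Vert_2 = \Vert M\Vert_{D^\theta}$, applied with $M = A$, which gives $\Vert A^\top\Vert_{(D^\theta)^{-1}} = \Vert A\Vert_{D^\theta} \le \mu_D$ by Assumption~\ref{assump:ergodicity}(b); submultiplicativity of induced norms then yields the claim, hence $\Vert (A^\top)^t d\Vert_{(D^\theta)^{-1}} \le \mu_D^t\Vert d\Vert_{(D^\theta)^{-1}}$. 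Third, I would record the two comparisons: by Cauchy--Schwarz and $\sum_z \pi^\theta(z) = 1$, $\Vert x\Vert_1 = \sum_z \sqrt{\pi^\theta(z)}\,\tfrac{|x_z|}{\sqrt{\pi^\theta(z)}} \le \Vert x\Vert_{(D^\theta)^{-1}}$; and by $\pi^\theta(z)\ge\sigma$ together with $\Vert x\Vert_2 \le \Vert x\Vert_1$, $\Vert x\Vert_{(D^\theta)^{-1}}^2 = \sum_z \tfrac{x_z^2}{\pi^\theta(z)} \le \tfrac1\sigma\Vert x\Vert_2^2 \le \tfrac1\sigma\Vert x\Vert_1^2 \le c_\infty^2 \Vert x\Vert_1^2$. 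Concatenating: $\Vert((P^\theta)^\top)^t d\Vert_1 = \Vert(A^\top)^t d\Vert_1 \le \Vert(A^\top)^t d\Vert_{(D^\theta)^{-1}} \le \mu_D^t\Vert d\Vert_{(D^\theta)^{-1}} \le c_\infty\mu_D^t\Vert d\Vert_1$.

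The main obstacle is conceptual rather than computational: Assumption~\ref{assump:ergodicity}(b) controls $P^\theta$ in the $D^\theta$-weighted norm, but the lemma concerns its transpose, and $\Vert M^\top\Vert_{D^\theta} \neq \Vert M\Vert_{D^\theta}$ in general when the chain is not reversible. The resolution is to measure the measure-valued iterate in the dual norm $\Vert\cdot\Vert_{(D^\theta)^{-1}}$, in which $(P^\theta)^\top$ restricted to mean-zero vectors inherits exactly the contraction factor $\mu_D$; the overhead incurred in passing between $\Vert\cdot\Vert_1$ and $\Vert\cdot\Vert_{(D^\theta)^{-1}}$ is only $\sigma^{-1/2}$ (hence certainly at most $c_\infty = \sqrt{|\mathcal Z|/\sigma}$) and is absorbed into the stated constant.
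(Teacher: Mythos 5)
Your proof is correct, and it follows the same skeleton as the paper's: reduce $((P^\theta)^\top)^t d$ on the mean-zero subspace to powers of the centered matrix $A = P^\theta - \mathbf{1}(\pi^\theta)^\top$, contract in a weighted Euclidean norm via Assumption~4(b), then convert to $\Vert\cdot\Vert_1$ using $\pi^\theta(z)\geq\sigma$. The one genuine difference is where the contraction is applied, and it is in your favor. The paper measures the iterate in $\Vert\cdot\Vert_{D^\theta}$ and bounds it by $\Vert A^\top\Vert_{D^\theta}^t\,\Vert d\Vert_{D^\theta}\leq \mu_D^t\Vert d\Vert_{D^\theta}$, i.e.\ it implicitly uses $\Vert A^\top\Vert_{D^\theta}\leq\mu_D$, whereas Assumption~4(b) literally bounds $\Vert A\Vert_{D^\theta}$; since $\Vert M^\top\Vert_{D^\theta}=\Vert M\Vert_{(D^\theta)^{-1}}$, these coincide only under additional structure (e.g.\ reversibility), so the paper's step is slightly loose as written. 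You instead work in the dual norm $\Vert\cdot\Vert_{(D^\theta)^{-1}}$, where the duality identity $\Vert A^\top\Vert_{(D^\theta)^{-1}}=\Vert A\Vert_{D^\theta}\leq\mu_D$ makes the use of the assumption exact, and your norm comparisons ($\Vert x\Vert_1\leq\Vert x\Vert_{(D^\theta)^{-1}}\leq \sigma^{-1/2}\Vert x\Vert_1$) even yield the constant $\sigma^{-1/2}$, which is at most the stated $c_\infty=\sqrt{|\mathcal{Z}|/\sigma}$. In short: the paper's route is shorter but relies on an unstated transpose-norm bound; yours buys a rigorous derivation from the assumption exactly as stated, at the cost of introducing the dual norm, and it slightly sharpens the constant.
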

\begin{proof} As $P^\theta$ is a ergodic stochastic matrix with stationary distribution $\sd^\theta$, we have
    $(P^\theta - \mathbf{1} (\sd^\theta)^\top )^t =  (P^\theta)^t -\mathbf{1} (\sd^\theta)^\top  $. As a result, 
    \begin{align*}
        ((P^\theta)^\top)^t d = [((P^\theta)^\top)^t -  \sd^\theta \mathbf{1}^\top ] d = [ (P^\theta)^t - \mathbf{1} (\sd^\theta)^\top]^\top d = [ (P^\theta - \mathbf{1} (\sd^\theta)^\top)^t]^\top d.
    \end{align*}
    As a result, by Assumption~\ref{assump:ergodicity}, $\Vert ((P^\theta)^\top)^t d \Vert_{\SD^\theta} \leq \Vert [ P^\theta - \mathbf{1} (\sd^\theta)^\top]^\top\Vert_{\SD^\theta}^t \Vert d\Vert_{\SD^\theta} \leq \mu_D^t \Vert d\Vert_{\SD^\theta} $. The rest follows from a change of norm as $\sqrt{\frac{\sigma}{|\mathcal{Z}|}}\Vert d\Vert_1 \leq \sqrt{\sigma} \Vert d\Vert_2 \leq  \Vert d\Vert_{\SD^\theta} \leq \Vert d\Vert_2 \leq  \Vert d\Vert_1. $
\end{proof}

\textbf{Checking Assumptions \ref{assump:sa1}, \ref{assump:sa2} and \ref{assump:sa3}.} Clearly Assumption~\ref{assump:sa1}, Assumption~\ref{assump:sa2} and Assumption~\ref{assump:sa3}(a) are satisfied. To check Assumption~\ref{assump:sa3}(b) and (c), we have the following Lemma. 

\begin{lemma}\label{lem:critic:Gh_bounded}
(a) For any $z,z'\in \mathcal{Z}$, we have, 
\[\Vert \tilde{G}_i(z,z')\Vert_\infty \leq  2 + \frac{1}{\rescale
}:= G_{\max}, \quad \Vert h_i(z)\Vert_\infty \leq \max(\rescale ,1) \bar{r}:= h_{\max}.\]
As a result, $\Vert G_i^\theta(z)\Vert_\infty \leq G_{\max}$ and $\Vert \bar{G}_i(\theta)\Vert_\infty \leq G_{\max}$, $\Vert \bar{h}_i(\theta) \Vert_\infty \leq  h_{\max}$.

(b) We also have that, 
\begin{align*}
   \Big\Vert \E_\theta  \big[G_i^{\theta}(z(t)) - \bar{G}_i(\theta)   |z(0)=z\big] \Big\Vert_\infty &\leq  2 G_{\max}  c_\infty\mu_D^t,\\
   \Big\Vert \E_\theta  \big[h_i(z(t)) - \bar{h}_i(\theta)   |z(0)=z\big]\Big\Vert_\infty &\leq  2 h_{\max}  c_\infty\mu_D^t.
\end{align*}
As a consequence, for any $z$, $\Vert \hat{G}_i^\theta(z)\Vert_\infty \leq 2 G_{\max}  c_\infty \frac{1}{1-\mu_D} $, $\Vert \hat{h}_i^\theta(z)\Vert_\infty \leq 2 h_{\max}  c_\infty \frac{1}{1-\mu_D} $.
\end{lemma}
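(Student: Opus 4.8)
The plan is to prove part (a) by a direct computation of the matrix/vector $\infty$-norms from the block forms of $\tilde{G}_i$ and $h_i$, and then obtain part (b) by combining that boundedness with the geometric mixing estimate of Lemma~\ref{lem:mixing}.

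\textbf{Part (a).} First I would compute $\Vert\tilde{G}_i(z,z')\Vert_\infty$, the maximum absolute row sum of the block matrix. The first row is $(1,0,\dots,0)$ and contributes $1$. Every other row is indexed by some coordinate $w\in\hat{\mathcal{Z}}_{\nik}$ and, because of the leading factor $\mathbf{e}_{z_{\nik}}$, vanishes unless $w=z_{\nik}$; the row indexed by $z_{\nik}$ equals $\big(\tfrac{1}{\rescale},\ \mathbf{e}_{z_{\nik}}^\top-\mathbf{e}_{z_{\nik}'}^\top\big)$, whose $\ell_1$-norm is at most $\tfrac{1}{\rescale}+2$ (using the dummy-coordinate convention $\mathbf{e}_{\tilde{z}_{\nik}}=0$, so that $\Vert\mathbf{e}_{z_{\nik}}^\top-\mathbf{e}_{z_{\nik}'}^\top\Vert_1\le 2$). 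Hence $\Vert\tilde{G}_i(z,z')\Vert_\infty\le 2+\tfrac{1}{\rescale}=G_{\max}$. For $h_i(z)$, the entries are $\rescale\, r_i(z_i)$ together with the coordinates of $\mathbf{e}_{z_{\nik}}r_i(z_i)$, each bounded in absolute value by $\max(\rescale,1)\bar{r}=h_{\max}$ via Assumption~\ref{assump:reward}. The three ``as a result'' bounds are then immediate: $G_i^\theta(z)=\E_{z'\sim P^\theta(\cdot|z)}\tilde{G}_i(z,z')$, $\bar{G}_i(\theta)=\E_{z\sim\sd^\theta}G_i^\theta(z)$ and $\bar{h}_i(\theta)=\E_{z\sim\sd^\theta}h_i(z)$ are all averages over probability distributions, and $\Vert\cdot\Vert_\infty$ is convex, so averaging cannot increase it.

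\textbf{Part (b).} Since $G_i^\theta(\cdot)$ and $h_i(\cdot)$ are functions of the current state only, the $t$-step conditional expectations satisfy
\[\E_\theta\big[G_i^\theta(z(t))\mid z(0)=z\big]-\bar{G}_i(\theta)=\sum_{z'\in\mathcal{Z}}\big([(P^\theta)^t]_{z,z'}-\sd^\theta(z')\big)G_i^\theta(z'),\]
and likewise with $h_i$ and $\bar{h}_i$ in place of $G_i^\theta$ and $\bar{G}_i$. This exhibits the left-hand side as a linear combination of the matrices $\{G_i^\theta(z')\}_{z'}$, each of $\infty$-norm at most $G_{\max}$ by part (a), with coefficient vector $d:=((P^\theta)^\top)^t e_z-\sd^\theta$, where $e_z$ is the $z$-th coordinate vector. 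Because $\sd^\theta$ is stationary we have $((P^\theta)^\top)^t\sd^\theta=\sd^\theta$, so $d=((P^\theta)^\top)^t(e_z-\sd^\theta)$ with $\mathbf{1}^\top(e_z-\sd^\theta)=0$ and $\Vert e_z-\sd^\theta\Vert_1\le 2$; Lemma~\ref{lem:mixing} then gives $\Vert d\Vert_1\le 2c_\infty\mu_D^t$. By the triangle inequality the $\infty$-norm of the combination is at most $\Vert d\Vert_1 G_{\max}\le 2G_{\max}c_\infty\mu_D^t$, which is the first displayed bound, and the $h_i$ version is identical with $h_{\max}$ replacing $G_{\max}$. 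Finally, since these per-$t$ bounds are absolutely summable, I can interchange expectation and the infinite sum in the Poisson-equation definitions $\hat{G}_i^\theta(z)=\sum_{t\ge0}\E_\theta[G_i^\theta(z(t))-\bar{G}_i(\theta)\mid z(0)=z]$ and $\hat{h}_i^\theta(z)=\sum_{t\ge0}\E_\theta[h_i(z(t))-\bar{h}_i(\theta)\mid z(0)=z]$, and summing the geometric series $\sum_{t\ge0}\mu_D^t=\tfrac{1}{1-\mu_D}$ yields the last two bounds.

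The calculations here are routine; the one place needing care is the bookkeeping in part (b) — matching transpose conventions so that the coefficient vector $z'\mapsto[(P^\theta)^t]_{z,z'}-\sd^\theta(z')$ is recognized as precisely $((P^\theta)^\top)^t(e_z-\sd^\theta)$, the form to which Lemma~\ref{lem:mixing} applies — together with, in part (a), handling the dummy-state convention $\mathbf{e}_{\tilde{z}_{\nik}}=0$ correctly when bounding $\Vert\mathbf{e}_{z_{\nik}}^\top-\mathbf{e}_{z_{\nik}'}^\top\Vert_1$.
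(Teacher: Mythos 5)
Your proposal is correct and follows essentially the same route as the paper: part (a) is a direct row-sum/entrywise computation from the definitions together with Assumption~\ref{assump:reward} (with averaging over probability distributions preserving the bounds), and part (b) rewrites the conditional expectation as $\sum_{z'}(d^t(z')-\sd^\theta(z'))G_i^\theta(z')$ and applies Lemma~\ref{lem:mixing} to $d^t-\sd^\theta=((P^\theta)^\top)^t(d^0-\sd^\theta)$ before summing the geometric series. The only difference is that you spell out the routine details (the dummy-coordinate convention and the transpose bookkeeping) that the paper leaves implicit.
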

\begin{proof} Part (a) follows directly from the definition as well as the bounded reward (Assumption~\ref{assump:reward}). Part (b) is a consequence of Lemma~\ref{lem:mixing}.  In details, given $z$, let $d^t$ be the distribution of $z(t)$ starting form $z$. Then, 
\begin{align*}
    \Big\Vert \E_\theta  \big[G_i^{\theta}(z(t)) - \bar{G}_i(\theta)   |z(0)=z\big] \Big\Vert_\infty &= \Vert \E_{z\sim d^t} G_i^\theta (z) - \E_{z\sim \sd^\theta } G_i^\theta(z)\Vert_\infty\\
    &= \Vert \sum_{z} (d^t(z) - \sd^\theta(z)) G_i^\theta(z)\Vert_\infty\\
    &\leq \sum_z | d^t(z) - \sd^\theta(z)| \Vert G_i^\theta(z)\Vert_\infty\\
    &\leq   G_{\max} \Vert d^t - \pi^\theta\Vert_1 \\
    &=G_{\max} \Vert ((P^\theta)^\top )^t (d^0 - \pi^\theta)\Vert_1 \leq G_{\max} 2 c_\infty\mu_D^t.
    \end{align*}
The proof for $h_i$ is similar. 
\end{proof}

Next, the following Lemma~\ref{lem:critic:lipschitz} shows the Lipschitz condition in Assumption~\ref{assump:sa3} (d) and (e) are true. The proof of Lemma~\ref{lem:critic:lipschitz} is postponed to \Cref{subsec:critic:lipschitz}

\begin{lemma}\label{lem:critic:lipschitz} The following holds.
\begin{itemize}
    \item[(a)] $P^\theta$ and $\sd^\theta$ are Lipschitz in $\theta$. 
    \item[(b)]   $\bar{G}_i(\theta)$ and $\bar{h}_i(\theta)$ are Lipschitz in $\theta$. 
    \item[(c)] For any $z$, $ [P^\theta\hat{h}_i^{\theta}](z) $ and $[P^\theta \hat{G}_i^\theta](z)$ are Lipschitz in $\theta$ with the Lipschitz constant independent of $z$. 
\end{itemize}
\end{lemma}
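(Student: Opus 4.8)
The plan is to establish (a), (b), (c) in that order, reducing everything to two ingredients. First, each localized policy $\zeta_i^{\theta_i}(a_i|s_i)$ is Lipschitz in $\theta_i$: this follows from the bounded score function $\Vert\nabla_{\theta_i}\log\zeta_i^{\theta_i}(a_i|s_i)\Vert\leq L_i$ of Assumption~\ref{assump:gradient} together with the identity $\nabla_{\theta_i}\zeta_i^{\theta_i} = \zeta_i^{\theta_i}\nabla_{\theta_i}\log\zeta_i^{\theta_i}$ and $\zeta_i^{\theta_i}\in[0,1]$. Second, the induced chain mixes uniformly geometrically, as supplied by Assumption~\ref{assump:ergodicity} and packaged in Lemma~\ref{lem:mixing}; throughout I keep in mind that the $\theta$-dependent weighted norms $\Vert\cdot\Vert_{\SD^\theta}$ are all equivalent to the Euclidean norm with the uniform constants $\sqrt\sigma$ and $1$, again by Assumption~\ref{assump:ergodicity}(a).

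For part (a), since $P^\theta((s',a')|(s,a)) = \prod_{i}\zeta_i^{\theta_i}(a_i'|s_i')P_i(s_i'|s_{N_i},a_i)$ and the $P_i$ are $\theta$-independent, a telescoping estimate over the $n$ factors (each bounded by $1$) shows that $P^\theta$ is Lipschitz in $\theta$ entrywise, hence in any fixed induced norm up to a $|\mathcal{Z}|$ factor. For $\sd^\theta$ I would use the fixed-point identity $(P^\theta)^\top\sd^\theta = \sd^\theta$: writing $d = \sd^\theta - \sd^{\bar\theta}$ and $e = ((P^\theta)^\top - (P^{\bar\theta})^\top)\sd^{\bar\theta}$, one checks that $\mathbf{1}^\top d = \mathbf{1}^\top e = 0$ and $d = (P^\theta)^\top d + e$; unrolling this recursion and letting the horizon tend to infinity, the tail term $((P^\theta)^\top)^N d$ vanishes by Lemma~\ref{lem:mixing} (since $\mathbf{1}^\top d = 0$), leaving $d = \sum_{t\geq 0}((P^\theta)^\top)^t e$, so $\Vert d\Vert_1 \leq \tfrac{c_\infty}{1-\mu_D}\Vert e\Vert_1 = O(\Vert\theta-\bar\theta\Vert)$ by the Lipschitzness of $P^\theta$.

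For part (b), write $\bar{G}_i(\theta) - \bar{G}_i(\bar\theta) = \sum_z (\sd^\theta(z) - \sd^{\bar\theta}(z))G_i^\theta(z) + \sum_z \sd^{\bar\theta}(z)(G_i^\theta(z) - G_i^{\bar\theta}(z))$: the first sum is $O(\Vert\sd^\theta - \sd^{\bar\theta}\Vert_1)$ by the uniform bound $\Vert G_i^\theta(z)\Vert_\infty \leq G_{\max}$ of Lemma~\ref{lem:critic:Gh_bounded}, and the second is controlled by $\sup_z \Vert G_i^\theta(z) - G_i^{\bar\theta}(z)\Vert_\infty$, which only involves the row $P^\theta(\cdot|z)\Phi_i$ and hence is $O(\Vert\theta-\bar\theta\Vert)$ by part (a); for $\bar{h}_i(\theta) = \E_{z\sim\sd^\theta}h_i(z)$ only the first term is present since $h_i$ does not depend on $\theta$. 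For part (c), expand $\hat{h}_i^\theta(z) = (h_i(z) - \bar{h}_i(\theta)) + \sum_{t\geq 1}[(A^\theta)^t h_i](z)$, where $A^\theta := P^\theta - \mathbf{1}(\sd^\theta)^\top$ satisfies $(A^\theta)^t = (P^\theta)^t - \mathbf{1}(\sd^\theta)^\top$ for $t\geq 1$, $\Vert A^\theta\Vert_{\SD^\theta}\leq\mu_D$, and $[(P^\theta)^t h_i](z) - \bar{h}_i(\theta) = [(A^\theta)^t h_i](z)$ componentwise; the $t=0$ term is Lipschitz via part (a), and for $t\geq 1$ the telescoping identity $(A^\theta)^t - (A^{\bar\theta})^t = \sum_{k=0}^{t-1}(A^\theta)^k(A^\theta - A^{\bar\theta})(A^{\bar\theta})^{t-1-k}$, after converting $\SD$-norms to Euclidean norms via Assumption~\ref{assump:ergodicity}(a), yields a bound of order $t\,\mu_D^{t-1}\Vert\theta-\bar\theta\Vert$ with a constant independent of $z$, and $\sum_{t\geq1}t\mu_D^{t-1} = (1-\mu_D)^{-2}$ makes the series Lipschitz uniformly in $z$. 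The same scheme handles $\hat{G}_i^\theta$, the only difference being an extra Lipschitz-in-$\theta$ contribution from $G_i^\theta(\cdot)$ itself (as in part (b)), again carrying a summable $\mu_D^t$ factor. Finally $[P^\theta \hat{h}_i^\theta](z) = \E_{z'\sim P^\theta(\cdot|z)}\hat{h}_i^\theta(z')$ is Lipschitz because the perturbation splits into the change of $P^\theta(\cdot|z)$ (paired with the bound on $\Vert\hat{h}_i^\theta\Vert_\infty$ from Lemma~\ref{lem:critic:Gh_bounded}) and the change of $\hat{h}_i^\theta$, which is uniform in $z'$; and identically for $[P^\theta\hat{G}_i^\theta](z)$.

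The step I expect to be the main obstacle is part (c): one has to reconcile the linear-in-$t$ growth of the Lipschitz constant of $(P^\theta)^t$ — unavoidable for a product of $t$ perturbed factors — with the geometric-in-$t$ decay coming from uniform mixing, so as to obtain a convergent series with a $z$-independent Lipschitz constant, all while carefully tracking that the natural norm for the mixing estimates, $\Vert\cdot\Vert_{\SD^\theta}$, itself varies with $\theta$ and must be traded for the Euclidean norm using the lower bound $\sd^\theta\geq\sigma$ from Assumption~\ref{assump:ergodicity}(a).
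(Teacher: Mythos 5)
Your proposal is correct and takes essentially the same route as the paper: Lipschitzness of $P^\theta$ via the bounded score function, Lipschitzness of $\pi^\theta$ by unrolling the stationarity recursion with Lemma~\ref{lem:mixing}, a bounded-times-Lipschitz product rule for $\bar G_i(\theta)$ and $\bar h_i(\theta)$, and for part (c) a summable bound of order $t\mu_D^{t-1}\Vert\theta-\bar\theta\Vert$ obtained by combining geometric mixing with the per-step perturbation, followed by the same split of $[P^\theta\hat h_i^\theta](z)$ and $[P^\theta\hat G_i^\theta](z)$ into a kernel-perturbation term (paired with the uniform bounds of Lemma~\ref{lem:critic:Gh_bounded}) and a uniform-in-$z$ perturbation of the Poisson solutions. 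The only cosmetic difference is that you telescope the operator powers $(A^\theta)^t$ on the function side, correctly paying the $\sigma$-dependent norm conversion once per block of $(A^\theta)^k$ rather than per factor, whereas the paper runs the equivalent recursion on the distribution differences $d^{\theta,t}-\pi^\theta$ and absorbs the $t\mu_D^{t-1}$ growth into a $\big(\tfrac{1+\mu_D}{2}\big)^t$ envelope.
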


\textbf{Checking Assumption~\ref{assump:sa4}.} Recall that $\theta(t+1) = \theta(t) + \eta_t \Gamma(\mathbf{\hat{Q}}^t) \hat{g}(t)$. Note that $\Vert \hat{g}_i(t)\Vert \leq L_i \max_{j}\Vert\hat{Q}_j^t\Vert_\infty$. By the definition of $\Gamma(\mathbf{\hat{Q}}^t)$, we have almost surely $\Vert \Gamma(\mathbf{\hat{Q}}^t) \hat{g}_i(t)\Vert \leq L_i$ for all $t$. As such, almost surely, for all $t$, $\Vert \Gamma(\mathbf{\hat{Q}}^t) \hat{g}(t)\Vert \leq L$. This, together with our selection of $\eta_t$ (Assumption~\ref{assump:stepsize}), shows that Assumption~\ref{assump:sa4} is satisfied.

\textbf{Checking Assumption~\ref{assump:sa5}.} Recall that $\xi_i^{t+1} = G_i^{\theta(t)}(z(t)) - \tilde{G}_i(z(t),z(t+1)) $. We have clearly $\E \xi_i^{t+1}|\mathcal{F}_t = 0$ per the definition of $G_i^\theta(z)$. Further, $\Vert \xi_i^{t+1}\Vert_\infty \leq 2 G_{\max}.$ So Assumption~\ref{assump:sa5} is satisfied. 

\textbf{Checking Assumption~\ref{assump:sa6}.} Finally, we check Assumption~\ref{assump:sa6}, the assumption that $\bar{G}_i(\theta)$ is uniformly positive definite. This is done in the following Lemma~\ref{lem:critic:positive_definite}, whose proof is postponed to \Cref{subsec:critic:positive_definite}.

\begin{lemma}\label{lem:critic:positive_definite}
We have when $\rescale = \frac{1}{\sigma \sqrt{(1-\mu_D)} }$, then for any $\theta$, $x_i^\top \bar{G}_i(\theta) x_i \geq \frac{1}{2}(1-\mu_D) \sigma^2  \Vert x_i\Vert^2.$
\end{lemma}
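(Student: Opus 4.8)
The plan is to write $x_i=(u,v)$, where $u\in\R$ is the coordinate corresponding to $\rescale\hat{\mu}_i$ and $v=\hat{Q}_i\in\R^{\hat{\mathcal{Z}}_{\nik}}$, lift $v$ to the full state--action space via $w:=\Phi_i v\in\R^{\mathcal{Z}}$ (so $w(z)=v(z_{\nik})$ when $z_{\nik}\neq\tilde{z}_{\nik}$ and $w(z)=0$ when $z_{\nik}=\tilde{z}_{\nik}$), and set $\bar{w}:=(\sd^\theta)^\top w=\langle\mathbf{1},w\rangle_{\SD^\theta}$. Using $v^\top\Phi_i^\top\sd^\theta=\bar{w}$ and $v^\top\Phi_i^\top\SD^\theta[\Phi_i-P^\theta\Phi_i]v=w^\top\SD^\theta(I-P^\theta)w$, expanding the block quadratic form gives
\[
x_i^\top\bar{G}_i(\theta)x_i=u^2+\tfrac{1}{\rescale}\,u\,\bar{w}+w^\top\SD^\theta(I-P^\theta)w .
\]

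Next I would lower bound the last term. Decompose $w=\bar{w}\mathbf{1}+w^\perp$ with $w^\perp:=w-\bar{w}\mathbf{1}$; since $\mathbf{1}^\top\SD^\theta\mathbf{1}=1$ we have $\langle\mathbf{1},w^\perp\rangle_{\SD^\theta}=(\sd^\theta)^\top w^\perp=0$. Because $P^\theta\mathbf{1}=\mathbf{1}$ and $(\sd^\theta)^\top P^\theta=(\sd^\theta)^\top$, every cross term involving $\mathbf{1}$ cancels, so $w^\top\SD^\theta(I-P^\theta)w=(w^\perp)^\top\SD^\theta(I-P^\theta)w^\perp$. Replacing $P^\theta$ by $P^\theta-\mathbf{1}(\sd^\theta)^\top$ is harmless when applied to $w^\perp$, so Cauchy--Schwarz for $\langle\cdot,\cdot\rangle_{\SD^\theta}$ together with Assumption~\ref{assump:ergodicity}(b) gives $(w^\perp)^\top\SD^\theta P^\theta w^\perp\le\mu_D\Vert w^\perp\Vert_{\SD^\theta}^2$, hence $w^\top\SD^\theta(I-P^\theta)w\ge(1-\mu_D)\Vert w^\perp\Vert_{\SD^\theta}^2$.

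The key quantitative step---and the only place the dummy pair $\tilde{z}_{\nik}$ matters---is a lower bound on $\Vert w^\perp\Vert_{\SD^\theta}^2$. Grouping $\sd^\theta$ by the value of $z_{\nik}$ and writing $\sd^\theta_{\nik}$ for the resulting marginal, $\Vert w\Vert_{\SD^\theta}^2$ and $\bar{w}$ are precisely the second moment and the mean of the random variable $W=v(z_{\nik})$ (with $v(\tilde{z}_{\nik}):=0$) under $\sd^\theta_{\nik}$, so $\Vert w^\perp\Vert_{\SD^\theta}^2=\E[W^2]-\bar{w}^2$. Since $W=0$ on the event $\{z_{\nik}=\tilde{z}_{\nik}\}$, which has probability $\sd^\theta_{\nik}(\tilde{z}_{\nik})\ge\sigma$ by Assumption~\ref{assump:ergodicity}(a), Cauchy--Schwarz yields $\bar{w}^2=(\E W)^2\le(1-\sigma)\E[W^2]$, hence $\Vert w^\perp\Vert_{\SD^\theta}^2\ge\sigma\Vert w\Vert_{\SD^\theta}^2\ge\sigma^2\Vert v\Vert^2$, where the last inequality uses $\sd^\theta_{\nik}(z_{\nik})\ge\sigma$ for every $z_{\nik}$. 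We also have $|\bar{w}|\le\Vert\mathbf{1}\Vert_{\SD^\theta}\Vert w\Vert_{\SD^\theta}=\Vert w\Vert_{\SD^\theta}\le\Vert v\Vert$.

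Finally, bounding the cross term by $\tfrac{1}{\rescale}|u\bar{w}|\le\tfrac12 u^2+\tfrac{1}{2{\rescale}^2}\bar{w}^2\le\tfrac12 u^2+\tfrac{1}{2{\rescale}^2}\Vert v\Vert^2$ and combining with the two displays above gives
\[
x_i^\top\bar{G}_i(\theta)x_i\ge\tfrac12 u^2+\Big((1-\mu_D)\sigma^2-\tfrac{1}{2{\rescale}^2}\Big)\Vert v\Vert^2 .
\]
With $\rescale=\tfrac{1}{\sigma\sqrt{1-\mu_D}}$ the bracket equals $\tfrac12(1-\mu_D)\sigma^2$, and since $\sigma^2(1-\mu_D)\le1$ the coefficient of $u^2$ also dominates $\tfrac12(1-\mu_D)\sigma^2$; therefore $x_i^\top\bar{G}_i(\theta)x_i\ge\tfrac12(1-\mu_D)\sigma^2(u^2+\Vert v\Vert^2)=\tfrac12(1-\mu_D)\sigma^2\Vert x_i\Vert^2$, uniformly in $\theta$. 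The main obstacle is the variance bound in the third paragraph (the reverse-Jensen inequality exploiting the zeroed-out dummy coordinate); everything else is bookkeeping with the $\SD^\theta$ inner product.
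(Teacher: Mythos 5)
Your proof is correct and follows essentially the same route as the paper: center the lifted vector against the stationary distribution, use Assumption~\ref{assump:ergodicity}(b) to extract the factor $(1-\mu_D)$, exploit the zeroed dummy pair together with Assumption~\ref{assump:ergodicity}(a) to get the $\sigma^2\Vert \hat{Q}_i\Vert^2$ lower bound, and absorb the $\hat{\mu}_i$ cross term via AM--GM using the stated choice of $\rescale$. Your variance/Cauchy--Schwarz argument for $\Vert w^\perp\Vert_{\SD^\theta}^2\geq\sigma\Vert w\Vert_{\SD^\theta}^2$ is just a probabilistic rephrasing of the paper's matrix inequality $\diag(v)-vv^\top\succeq\sigma^2 I$, so the two proofs are quantitatively identical.
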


Given $\theta$,  let $x_i^\theta = [\rescale\hat{\mu}_i^{\theta};\hat{Q}_i^{\theta}]$ be the unique solution to $ \bar{h}_i(\theta) - \bar{G}_i(\theta) x_i  = 0$. 
Now that Assumptions~\ref{assump:sa1} to \ref{assump:sa6} are satisfied, by Theorem~\ref{thm:SA} we immediately have almost surely $\lim_{t\rightarrow\infty}   \Vert  x_i^t- [\bar{G}_i(\theta(t))]^{-1}\bar{h}_i(\theta(t)) \Vert =0$, and $\sup_{t\geq 0}\Vert x_i^t\Vert <\infty$. As $x_i^t = [\rescale \hat{\mu}_i^t; \hat{Q}_i^t]$, this directly implies $\lim_{t\rightarrow\infty}   \hat{Q}_i^t - \hat{Q}_i^{\theta(t)} =0$ and $\sup_{t\geq 0} \Vert \hat{Q}_i^t\Vert_\infty \leq \infty$. This proves part (b) of Theorem~\ref{thm:critic}. For part (a), we show the following Lemma~\ref{lem:critic:fixed_point} on the property of $x_i^\theta$, whose proof is postponed to \Cref{subsec:critic:fixed_point}. With Lemma~\ref{lem:critic:fixed_point}, the proof of Theorem~\ref{thm:critic} is concluded.

\begin{lemma}\label{lem:critic:fixed_point}
Given $\theta$,  the solution $x_i^\theta = [\rescale\hat{\mu}_i^{\theta};\hat{Q}_i^{\theta}]$ to $ \bar{h}_i(\theta) - \bar{G}_i(\theta) x_i  = 0$ satisfies $\hat{\mu}_i^\theta = J_i(\theta)$. Further, there exists some $c_i^\theta\in\R$ s.t. 
\begin{align}
  \Vert \Phi_i \hat{Q}_i^\theta + c_i^\theta\mathbf{1} - Q_i^\theta \Vert_{\SD^\theta}\leq \frac{  c \rhok}{1-\mu_{D}},
\end{align}
where $\mathbf{1}$ is the all one vector in $\R^{\mathcal{Z}}$.
\end{lemma}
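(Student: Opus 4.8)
The plan is to unwind the linear fixed-point system $\bar{h}_i(\theta) - \bar{G}_i(\theta)x_i^\theta = 0$ block by block. Because of the rescaling by $\rescale$, the top (scalar) block reads $\rescale\,(\sd^\theta)^\top r_i = \rescale\,\hat{\mu}_i^\theta$, which immediately gives $\hat{\mu}_i^\theta = (\sd^\theta)^\top r_i = \E_{z\sim\sd^\theta} r_i(z_i) = J_i(\theta)$, the first assertion. Substituting $\hat{\mu}_i^\theta = J_i(\theta)$ and $\sd^\theta = \SD^\theta\mathbf{1}$ into the bottom (vector) block turns it into the Galerkin / projected-Bellman condition $\Phi_i^\top\SD^\theta[(r_i - J_i(\theta)\mathbf{1}) - (I-P^\theta)\Phi_i\hat{Q}_i^\theta] = 0$. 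I would then use the average-reward Bellman equation $(I-P^\theta)Q_i^\theta = r_i - J_i(\theta)\mathbf{1}$ (immediate from the series definition of $Q_i^\theta$ together with the geometric mixing of Assumption~\ref{assump:ergodicity}) and the identity $(I-P^\theta)\mathbf{1}=0$ to rewrite this, for \emph{every} scalar $c$, as
\[ \Phi_i^\top\SD^\theta(I-P^\theta)(\Phi_i\hat{Q}_i^\theta + c\mathbf{1} - Q_i^\theta) = 0 . \]

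Next I would introduce the subspace $\mathcal{W}\subseteq\R^{\mathcal{Z}}$ spanned by the columns of $\Phi_i$ together with $\mathbf{1}$, equipped with the inner product $\langle u,v\rangle_{\SD^\theta} = u^\top\SD^\theta v$ (a genuine inner product since $\sd^\theta(z)\ge\sigma>0$), and let $\Pi$ be the $\SD^\theta$-orthogonal projection onto $\mathcal{W}$. Two facts drive the rest. First, $\mathcal{W}$ is exactly the set of vectors depending only on $z_{\nik}$ — a column combination assigns arbitrary values on the non-dummy local pairs and the $\mathbf{1}$-term fixes the value on $\tilde{z}_{\nik}$ — so the truncated $Q$-function $\tilde{Q}_i^\theta$ of Lemma~\ref{lem:truncated_pg}(a) lies in $\mathcal{W}$, and hence the projection residual obeys $\Vert(I-\Pi)Q_i^\theta\Vert_{\SD^\theta}\le\Vert\tilde{Q}_i^\theta - Q_i^\theta\Vert_{\SD^\theta}\le\Vert\tilde{Q}_i^\theta - Q_i^\theta\Vert_\infty\le c\rhok$. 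Second, $(\sd^\theta)^\top(I-P^\theta)=0$ by stationarity, so the Galerkin condition extends from test vectors in the columns of $\Phi_i$ to all $u\in\mathcal{W}$: $\langle u,(I-P^\theta)(\Phi_i\hat{Q}_i^\theta + c\mathbf{1} - Q_i^\theta)\rangle_{\SD^\theta}=0$.

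I would then pick $c_i^\theta$ so that $r := \Phi_i\hat{Q}_i^\theta + c_i^\theta\mathbf{1} - Q_i^\theta$ is $\SD^\theta$-orthogonal to $\mathbf{1}$ (namely $c_i^\theta = (\sd^\theta)^\top(Q_i^\theta - \Phi_i\hat{Q}_i^\theta)$) and split $r = g - b$ with $b := (I-\Pi)Q_i^\theta$ and $g := \Phi_i\hat{Q}_i^\theta + c_i^\theta\mathbf{1} - \Pi Q_i^\theta\in\mathcal{W}$. Since $b\perp_{\SD^\theta}\mathcal{W}$ we get $\langle g,b\rangle_{\SD^\theta}=0$ and $b\perp_{\SD^\theta}\mathbf{1}$, hence $g = r+b\perp_{\SD^\theta}\mathbf{1}$. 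Testing the extended Galerkin condition with $u=g$ gives $\langle g,(I-P^\theta)g\rangle_{\SD^\theta} = \langle g,(I-P^\theta)b\rangle_{\SD^\theta} = -\langle g,P^\theta b\rangle_{\SD^\theta}$; using $\Vert P^\theta v\Vert_{\SD^\theta}\le\mu_D\Vert v\Vert_{\SD^\theta}$ for $v\perp_{\SD^\theta}\mathbf{1}$ (which follows from Assumption~\ref{assump:ergodicity}(b), applied to $g$ and to $b$) together with Cauchy--Schwarz yields $(1-\mu_D)\Vert g\Vert_{\SD^\theta}^2\le\mu_D\Vert g\Vert_{\SD^\theta}\Vert b\Vert_{\SD^\theta}$, i.e. $\Vert g\Vert_{\SD^\theta}\le\tfrac{\mu_D}{1-\mu_D}\Vert b\Vert_{\SD^\theta}$. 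Finally $g\perp_{\SD^\theta}b$ gives $\Vert r\Vert_{\SD^\theta}^2 = \Vert g\Vert_{\SD^\theta}^2 + \Vert b\Vert_{\SD^\theta}^2\le\tfrac{2\mu_D^2-2\mu_D+1}{(1-\mu_D)^2}\Vert b\Vert_{\SD^\theta}^2\le\tfrac{1}{(1-\mu_D)^2}\Vert b\Vert_{\SD^\theta}^2$, since $2\mu_D^2-2\mu_D+1\le1$ on $[0,1]$; plugging in $\Vert b\Vert_{\SD^\theta}\le c\rhok$ gives $\Vert r\Vert_{\SD^\theta}\le\tfrac{c\rhok}{1-\mu_D}$, as claimed.

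The delicate part throughout is the bookkeeping around the constant direction $\mathbf{1}$: the average-reward $Q$-function is determined only up to an additive constant, the critic is built on $\hat{\mathcal{Z}}_{\nik}$ precisely so that $\mathbf{1}\notin\mathrm{span}\{\text{columns of }\Phi_i\}$ (which is what lets $\bar{G}_i(\theta)$ be invertible, cf. Lemma~\ref{lem:critic:positive_definite}), and $P^\theta$ is a genuine $\mu_D$-contraction only after one projects out constants. Everything else is the standard projected-Bellman error estimate; obtaining the clean factor $1/(1-\mu_D)$ hinges on choosing $c_i^\theta$ to align $r$ with the $\SD^\theta$-orthogonal complement of $\mathbf{1}$ and verifying that both $g$ and $b$ lie in that complement.
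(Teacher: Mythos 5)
Your proposal is correct, and it reaches the bound by a genuinely different route than the paper. The paper lifts the reduced equation on $\hat{\mathcal{Z}}_{\nik}$ to an expanded system that includes the dummy pair $\tilde{z}_{\nik}$ (Claims 1 and 2 in its proof), rewrites that system as a fixed point of the projected Bellman operator $g=\Pi_i^\theta \td_i^\theta\tilde{\Phi}_i$, proves $g$ is a $\mu_D$-contraction on the mean-zero subspace $\ssubs$, characterizes the fixed-point set up to additive constants, and then converts the projection error into the final bound via the contraction inequality $\Vert \tilde{\Phi}_i\tilde{Q}_i^\theta-Q_i^\theta\Vert_{\SD^\theta}\leq \frac{1}{1-\mu_D}\Vert \tilde{\Phi}_i\Pi_i^\theta Q_i^\theta-Q_i^\theta\Vert_\infty$, bounding the latter directly by $c\rhok$ because $\Pi_i^\theta$ is a conditional expectation, hence a particular truncated $Q$-function. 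You instead stay with the reduced Galerkin condition, extend the test space by $\mathbf{1}$ using $(\sd^\theta)^\top(I-P^\theta)=0$, choose $c_i^\theta$ to center the residual, and run an energy/Cauchy--Schwarz argument with the $\SD^\theta$-orthogonal split $r=g-b$, obtaining $\Vert g\Vert_{\SD^\theta}\leq\frac{\mu_D}{1-\mu_D}\Vert b\Vert_{\SD^\theta}$ and then Pythagoras; the projection residual $\Vert b\Vert_{\SD^\theta}$ is bounded by comparing against an arbitrary truncated $Q$-function via Lemma~\ref{lem:truncated_pg}(a). The two projections coincide (the span of the columns of $\Phi_i$ together with $\mathbf{1}$ is exactly the functions of $z_{\nik}$), so the substance of the approximation step is the same. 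What your route buys is the avoidance of the expanded system and dummy-state bookkeeping, plus a marginally sharper constant $\sqrt{\mu_D^2+(1-\mu_D)^2}/(1-\mu_D)\leq 1/(1-\mu_D)$; what the paper's route buys is that the contraction framework simultaneously establishes existence and uniqueness of the fixed point and the structure of the solution set, which you instead obtain by citing the uniform positive definiteness of $\bar{G}_i(\theta)$ (Lemma~\ref{lem:critic:positive_definite}), as the lemma statement and the paper also do. Your use of the average-reward Bellman identity $(I-P^\theta)Q_i^\theta=r_i-J_i(\theta)\mathbf{1}$ and of $\Vert P^\theta v\Vert_{\SD^\theta}\leq\mu_D\Vert v\Vert_{\SD^\theta}$ for $v\perp_{\SD^\theta}\mathbf{1}$ are both justified under Assumption~\ref{assump:ergodicity}, so no gap remains.
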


\subsection{Proof of Lemma~\ref{lem:critic:lipschitz}}\label{subsec:critic:lipschitz}
 To show (a), notice that, $P^\theta(s',a'|s,a) = P(s'|s,a) \zeta^\theta(a'|s')$. Therefore, 
 \begin{align*}
     \Vert P^\theta - P^{\bar\theta}\Vert_\infty &= \max_{s,a} \sum_{s',a'} P(s'|s,a) |\zeta^\theta(a'|s') - \zeta^{\bar\theta}(a'|s')| \\
     &\leq L\Vert\theta - \bar{\theta}\Vert \max_{s,a} \sum_{s',a'} P(s'|s,a)\\
     &= L |\mathcal{A}| \Vert\theta - \bar{\theta}\Vert := L_P \Vert\theta - \bar{\theta}\Vert,
 \end{align*}
    where in the inequality, we have used that for any $a\in\mathcal{A}$, $s\in\mathcal{S}$, as $\Vert \nabla_{\theta_i} \log \zeta^{\theta}(a|s) \Vert=\Vert \nabla_{\theta_i} \log \zeta_i^{\theta_i}(a_i|s_i) \Vert \leq L_i$ (Assumption~\ref{assump:gradient}), we have $\Vert \nabla_\theta \zeta^\theta(a|s) \Vert \leq \Vert \nabla_\theta \log \zeta^\theta(a|s) \Vert \leq \sqrt{\sum_{i\in\mathcal{N}} L_i^2} = L $.
    
    Next, we show $\sd^\theta $ is Lipschitz continuous in $\theta$. Notice that $\sd^\theta$ satisfies $\sd^\theta = ( P^\theta)^\top \sd^\theta$. As such, we have,
    \begin{align*}
        \sd^\theta - \sd^{\bar\theta} &= (P^\theta)^\top(\pi^\theta - \sd^{\bar{\theta}}) + (P^\theta - P^{\bar{\theta}})^\top \sd^{\bar\theta}\\
        &=((P^\theta)^\top)^k(\pi^\theta - \sd^{\bar{\theta}}) + \sum_{\ell=0}^{k-1} ((P^\theta)^\top)^{\ell} (P^\theta - P^{\bar{\theta}})^\top \sd^{\bar\theta}=  \sum_{\ell=0}^{\infty} ((P^\theta)^\top)^{\ell} (P^\theta - P^{\bar{\theta}})^\top \sd^{\bar\theta}.
    \end{align*}
    Notice that by Lemma~\ref{lem:mixing},
    \begin{align*}
        \Vert ((P^\theta)^\top)^{\ell} (P^\theta - P^{\bar{\theta}})^\top \sd^{\bar\theta}\Vert_1 \leq c_\infty \mu_D^\ell \Vert (P^\theta - P^{\bar{\theta}})^\top \sd^{\bar\theta}\Vert_1\leq c_\infty\mu_D^\ell \Vert P^\theta - P^{\bar\theta}\Vert_\infty.
    \end{align*}
    Therefore, we have
    \begin{align*}
        \Vert \sd^\theta - \sd^{\bar{\theta}} \Vert_1 &\leq  \frac{c_\infty}{1-\mu_D} \Vert P^\theta - P^{\bar\theta}\Vert_\infty \leq  \frac{c_\infty}{1-\mu_D} L_P\Vert \theta - \bar{\theta}\Vert.
    \end{align*}
 So we are done for part (a). 
    
    For part (b), notice that $\bar{h}_i(\theta)$ depends on $\theta$ only through $\sd^\theta$ and is linear in $\sd^\theta$. As a result $\bar{h}_i(\theta)$ is Lipschitz in $\theta$. For similar reasons, for $\bar{G}_i(\theta)$ we only need to show $\SD^\theta P^\theta$ is Lipschitz in $\theta$. This is true because both $\SD^\theta$ and $P^\theta$ are Lipschitz in $\theta$, and they themselves are bounded. 
    
    \smallskip
    For part (c), fixing any initial $z$, let $d^{\theta,t}$ be the distribution of $z(t)$ under policy $\theta$. We first show that $d^{\theta,t} - \pi^\theta$ is Lipschitz in $\theta$ with Lipschitz constant geometrically decaying in $t$. To this end, note that
\begin{align*}
 d^{\theta,t} - \pi^\theta - (d^{\bar\theta,t} - \pi^{\bar\theta})  
    &= (P^\theta)^\top ( d^{\theta,t-1} - \pi^\theta) - (P^{\bar{\theta}})^\top (d^{\bar\theta,t-1} - \pi^{\bar\theta})\\
    &= (P^\theta)^\top [ d^{\theta,t-1} - \pi^\theta- (d^{\bar\theta,t-1} - \pi^{\bar\theta} )] + (P^\theta - P^{\bar{\theta}})^\top (d^{\bar\theta,t-1} - \pi^{\bar\theta})\\
    &= ((P^\theta)^t )^\top (\pi^{\bar\theta}-\pi^\theta) +\sum_{\ell=0}^{t-1}((P^\theta)^\ell )^\top (P^\theta - P^{\bar{\theta}})^\top (d^{\bar\theta,t-\ell-1} - \pi^{\bar\theta}).
\end{align*}
As such, we have,
\begin{align}
    \Vert d^{\theta,t} - \pi^\theta - (d^{\bar\theta,t} - \pi^{\bar\theta})  \Vert_1 &\leq c_\infty \mu_D^t \Vert \pi^{\bar\theta}-\pi^\theta\Vert_1 + \sum_{\ell=0}^{t-1} c_\infty\mu_D^{\ell} \Vert P^\theta - P^{\bar\theta}\Vert_\infty \Vert d^{\bar\theta,t-\ell-1} - \pi^{\bar\theta}\Vert_1 \nonumber \\ 
    &\leq c_\infty \mu_D^t \frac{c_\infty}{1-\mu_D} L_P\Vert \theta-\bar{\theta}\Vert + \sum_{\ell=0}^{t-1} c_\infty\mu_D^\ell L_P \Vert \theta - \bar\theta\Vert 2 c_\infty \mu_D^{t-\ell-1} \nonumber \\
    &= \frac{c_\infty^2 L_P}{1-\mu_D} \mu_D^t  \Vert \theta-\bar{\theta}\Vert + 2 c_\infty^2 L_P  t \mu_D^{t-1}  \Vert \theta - \bar\theta\Vert \nonumber\\
   & < \frac{5c_\infty^2 L_P}{1-\mu_D } (\frac{1+\mu_D}{2})^t \Vert \theta - \bar{\theta}\Vert.\label{eq:critic:d_lipschitz}
\end{align}
Next, we turn to $\hat{G}_i^\theta(z)$ and show its Lipschitz continuity in $\theta$. Note that by definition, 
\begin{align*}
     \hat{G}_i^\theta(z) &= \E_\theta [\sum_{t=0}^\infty [G_i^{\theta}(z(t)) - \bar{G}_i(\theta)  ] |z(0)=z] = \sum_{t=0}^\infty [ \E_{z'\sim d^{\theta,t}} G_i^\theta(z')  -\E_{z'\sim \pi^\theta}G_i^\theta(z')]\\
     &=\sum_{t=0}^\infty \sum_{z'\in\mathcal{Z}} (d^{\theta,t}(z') - \pi^\theta(z')) G_i^\theta(z').
\end{align*}

As such,
\begin{align*}
    &\Vert \hat{G}_i^\theta(z) - \hat{G}_i^{\bar\theta}(z)\Vert_\infty \\
    &\leq \sum_{t=0}^\infty \sum_{z'\in\mathcal{Z}} \Big\Vert (d^{\theta,t}(z') - \pi^\theta(z')) G_i^\theta(z') - (d^{\bar\theta,t}(z') - \pi^{\bar\theta}(z')) G_i^{\bar\theta}(z') \Big\Vert_\infty\\
    &\leq  \sum_{t=0}^\infty \sum_{z'\in\mathcal{Z}} \Big[ |d^{\theta,t}(z') - \pi^\theta(z') - (d^{\bar\theta,t}(z') - \pi^{\bar\theta}(z')) | \Vert G_i^\theta(z')\Vert_\infty + |d^{\bar\theta,t}(z') - \pi^{\bar\theta}(z')| \Vert G_i^{\theta}(z')- G_i^{\bar\theta}(z')\Vert_\infty \Big]\\
    &\leq \sum_{t=0}^\infty \Big[\Vert d^{\theta,t} - \pi^\theta - (d^{\bar\theta,t} - \pi^{\bar\theta})\Vert_1 G_{\max} + \Vert d^{\bar\theta,t} - \pi^{\bar\theta}\Vert_1 \sup_{z'}\Vert  G_i^{\theta}(z')- G_i^{\bar\theta}(z') \Vert_\infty  \Big]\\
    &\leq \sum_{t=0}^\infty \Big[  \frac{5c_\infty^2 L_P G_{\max}}{1-\mu_D } (\frac{1+\mu_D}{2})^t \Vert \theta - \bar{\theta}\Vert  + 2 c_\infty \mu_D^t \sup_{z'}\Vert  G_i^{\theta}(z')- G_i^{\bar\theta}(z') \Vert_\infty \Big].
\end{align*}
 Since $G_i^\theta(z')$ depends on $\theta$ only through $P^\theta$ and is linear in $P^\theta$, $G_i^\theta(z')$ is Lipschitz in $\theta$. Therfore, in the above summation, each summand can be written as some geometrically decaying term times $\Vert\theta - \bar{\theta}\Vert$. As such, $ \hat{G}_i^\theta(z)$ is Lipschitz in $\theta$, and the Lipschitz constant can be made independent of $z$ by taking the sup over the finite set $z\in\mathcal{Z}$. As a result, $ [P^\theta \hat{G}_i^\theta] (z) = \sum_{z'} P^\theta(z'|z) \hat{G}_i^\theta(z') $ is Lipschitz in $\theta$ as well since both $P^\theta$ and $\hat{G}_i^\theta(z')$ are Lipschitz in $\theta$ and bounded. 

The proof for the Lipschitz continuity of $P^\theta \hat{h}_i^\theta(z)$ is similar and is hence omitted. Therefore, part (c) is done and the proof is concluded.
\subsection{Proof of Lemma~\ref{lem:critic:positive_definite}} \label{subsec:critic:positive_definite}
Recall that,
\begin{align*}
     \bar{G}_i(\theta) =   \left[ \begin{array}{cc}
 1 & 0\\
\frac{1}{\rescale} \Phi_i^\top \sd^\theta  & \Phi_i^\top \SD^\theta \big[   \Phi_i - P^\theta\Phi_i \big] 
    \end{array}\right].
\end{align*}
Let $x_i = [\hat{\mu}_i, \hat{Q}_i]$ and define $\hat{\Phi}_i = \Phi_i - \mathbf{1} (\sd^\theta)^\top \Phi_i$. Then, 
\begin{align*}
    \Phi_i^\top D^\theta \Phi_i & = \hat\Phi_i^\top D^\theta \hat\Phi_i + \Phi_i^\top \sd^\theta \mathbf{1}^\top D^\theta \hat\Phi_i + \hat\Phi_i^\top D^\theta \mathbf{1} (\sd^\theta)^\top \Phi_i + \Phi_i^\top \sd^\theta \mathbf{1}^\top D^\theta \mathbf{1} (\sd^\theta)^\top \Phi_i\\
    &= \hat\Phi_i^\top D^\theta \hat\Phi_i + \Phi_i^\top \sd^\theta   (\sd^\theta)^\top \Phi_i,\\
    \Phi_i^\top D^\theta P^\theta \Phi_i &= \hat\Phi_i^\top D^\theta P^\theta \hat\Phi_i + \Phi_i^\top \sd^\theta \mathbf{1}^\top D^\theta P^\theta \hat\Phi_i + \hat\Phi_i^\top D^\theta P^\theta \mathbf{1} (\sd^\theta)^\top \Phi_i + \Phi_i^\top \sd^\theta \mathbf{1}^\top D^\theta P^\theta \mathbf{1} (\sd^\theta)^\top \Phi_i\\
    &= \hat\Phi_i^\top D^\theta P^\theta \hat\Phi_i + \Phi_i^\top \sd^\theta   (\sd^\theta)^\top \Phi_i.
\end{align*}
As such, 
$$\Phi_i^\top D^\theta \Phi_i - \Phi_i^\top D^\theta P^\theta \Phi_i = \hat\Phi_i^\top D^\theta \hat\Phi_i - \hat\Phi_i^\top D^\theta P^\theta \hat\Phi_i , $$
from which, we have using Assumption~\ref{assump:ergodicity},
\begin{align}
    \hat{Q}_i^\top (\Phi_i^\top D^\theta \Phi_i - \Phi_i^\top D^\theta P^\theta \Phi_i) \hat{Q}_i &\geq \Vert \hat{\Phi}_i \hat{Q}_i\Vert_{\SD^\theta}^2 - \Vert \hat{\Phi}_i \hat{Q}_i\Vert_{\SD^\theta} \Vert P^\theta \hat{\Phi}_i \hat{Q}_i\Vert_{\SD^\theta}  \nonumber \\
    &= \Vert \hat{\Phi}_i \hat{Q}_i\Vert_{\SD^\theta}^2 - \Vert \hat{\Phi}_i \hat{Q}_i\Vert_{\SD^\theta} \Vert ( P^\theta - \mathbf{1}(\sd^\theta)^\top)\hat{\Phi}_i \hat{Q}_i\Vert_{\SD^\theta} \nonumber \\
    &\geq  \Vert \hat{\Phi}_i \hat{Q}_i\Vert_{\SD^\theta}^2 - \mu_D \Vert \hat{\Phi}_i \hat{Q}_i\Vert_{\SD^\theta}^2 \nonumber\\
    &= (1 - \mu_D) \hat{Q}_i^\top \hat{\Phi}_i^\top \SD^\theta \hat{\Phi}_i \hat{Q}_i \nonumber\\
    &= (1 - \mu_D) \hat{Q}_i^\top ( {\Phi}_i^\top \SD^\theta {\Phi}_i -  \Phi_i^\top \sd^\theta (\sd^\theta)^\top \Phi_i )\hat{Q}_i\nonumber\\
    &\geq (1-\mu_D)\sigma^2 \Vert \hat{Q}_i\Vert^2,\label{eq:critic:uniform_pd_eq}
\end{align}
where the last step is due to the following. Let $v\in\R^{\hat{\mathcal{Z}}_{\nik}}$ be the marginalized distribution of $z_{\nik}\in \hat{\mathcal{Z}}_{\nik}$ under $\sd^\theta$, i.e. $v(z_\nik) = \pi^\theta (z_\nik)$. Using $v(z_{\nik}) \geq \sigma$ and $\sum_{z_{\nik}\in\hat{\mathcal{Z}}_{\nik}} v(z_{\nik}) \leq 1 - \sigma$ (Assumption~\ref{assump:ergodicity}), we have,
\begin{align*}
    {\Phi}_i^\top \SD^\theta {\Phi}_i - \Phi_i^\top \pi^\theta (\pi^\theta)^\top \Phi_i &= \diag(v) - v v^\top = \diag(v)^{\frac{1}{2}} (I - \diag(v)^{-\frac{1}{2}} v ( \diag(v)^{-\frac{1}{2}} v )^\top ) \diag(v)^{\frac{1}{2}} \\
    & \succeq (1 - \Vert \diag(v)^{-\frac{1}{2}} v\Vert^2) \diag(v) \\
    &\succeq \sigma^2 I. 
\end{align*}
Building on \eqref{eq:critic:uniform_pd_eq}, the rest of the proof follows easily. We have, 
\begin{align*}
    x_i^\top \bar{G}_i (\theta) x_i &\geq \hat{\mu}_i^2 + (1-\mu_D) \sigma^2 \Vert\hat{Q}_i\Vert^2 +  \frac{1}{\rescale}\hat{Q}_i^\top \Phi_i^\top \sd^\theta \hat{\mu}_i  \\
    &\geq \hat{\mu}_i^2 + (1-\mu_D) \sigma^2 \Vert\hat{Q}_i\Vert^2 -  \frac{1}{\rescale} \Vert \hat{Q}_i \Vert | \hat{\mu}_i|\\
    &\geq \min(\frac{1}{2}, \frac{1}{2}(1-\mu_D) \sigma^2 )\Vert x_i\Vert^2,
\end{align*}
where we have used 
\[\frac{1}{2} \hat{\mu}_i^2 + \frac{1}{2} (1-\mu_D) \sigma^2 \Vert\hat{Q}_i\Vert^2 \geq \sigma \sqrt{(1-\mu_D)} \Vert \hat{Q}_i\Vert |\hat{\mu}_i| \geq \frac{1}{c'} \Vert \hat{Q}_i\Vert |\hat{\mu}_i| . \]

\subsection{Proof of Lemma~\ref{lem:critic:fixed_point}} \label{subsec:critic:fixed_point}
By the definition of $\bar{G}_i(\theta)$ and $\bar{h}_i(\theta)$, we have $\hat{\mu}_i^{\theta} = (\pi^\theta)^\top r_i = J_i(\theta) $, the average reward at node $i$ under policy $\theta$, and $\hat{Q}_i^\theta \in\R^{\hat{\mathcal{Z}}_{\nik}}$ is the solution to the following linear equation (the solution must be unique due to Lemma~\ref{lem:critic:positive_definite}),
\begin{align}\label{eq:critic:fixed_point_q}
     0 &=-\Phi_i^\top \SD^\theta \hat{\mu}_i^\theta \mathbf{1} + \Phi_i^\top \SD^\theta \big[ P^\theta\Phi_i - \Phi_i \big] \hat{Q}_i^\theta+ \Phi_i^\top\SD^\theta r_i \nonumber \\
        &=\Phi_i^\top \SD^\theta [r_i -\hat{\mu}_i^\theta\mathbf{1} +  P^\theta\Phi_i \hat{Q}_i^\theta] - \Phi_i^\top\SD^\theta\Phi_i\hat{Q}_i^\theta \nonumber\\
                &=\Phi_i^\top \SD^\theta [r_i - J_i(\theta) \mathbf{1} +  P^\theta\Phi_i \hat{Q}_i^\theta] - \Phi_i^\top\SD^\theta\Phi_i\hat{Q}_i^\theta  .
\end{align}
To understand the solution of \eqref{eq:critic:fixed_point_q}, we define an equivalent expanded equation, whose solution can be related to the Bellman operator. For this purpose, define $\tilde{\Phi}_i\in\R^{\mathcal{Z}\times {\mathcal{Z}}_{\nik}}$ to be a matrix with each row indexed by $z\in \mathcal{Z}$ and each column indexed by $z_{\nik}' \in  \mathcal{Z}_{\nik}$ and $\tilde{\Phi}_i(z,z_{\nik}') = 1$ if $z_{\nik} = z_{\nik}'$ and $0$ elsewhere. In other words, $\tilde{\Phi}_i$ is essentially $\Phi_i$ with the additional column corresponding to the dummy state-action pair $\tilde{z}_{\nik}$. Consider the following equations on $\bar{Q}_i^\theta\in\R^{\mathcal{Z}_{\nik}}$
\begin{subequations}\label{eq:critic:fixed_point_q_equi}
\begin{align}
0&= \tilde{\Phi}_i^\top \SD^\theta [r_i - J_i(\theta) \mathbf{1} +  P^\theta \tilde\Phi_i \bar{Q}_i^\theta] - \tilde\Phi_i^\top\SD^\theta\tilde\Phi_i\bar{Q}_i^\theta, \label{eq:critic:fixed_point_q_equi_1}  \\
0 &=  \bar{Q}_i^\theta(\tilde{z}_\nik) .\label{eq:critic:fixed_point_q_equi_2}
\end{align}
\end{subequations}
\textbf{Claim 1:} The equations \eqref{eq:critic:fixed_point_q} and \eqref{eq:critic:fixed_point_q_equi} are equivalent in the sense that both have unique solutions, and the solutions are related by $\hat{Q}_i^\theta(z_\nik) = \bar{Q}_i^\theta(z_{\nik}),\forall z_{\nik} \in \hat{\mathcal{Z}}_\nik$. 

Before we prove the claim, we first show \eqref{eq:critic:fixed_point_q_equi_1} can be actually reformulated as the fixed point equation related to the Bellman operator.

\textbf{Reformulation of \eqref{eq:critic:fixed_point_q_equi_1} as fixed point equation}.    It is easy to check that $\tilde{\SD}^\theta_i = \tilde{\Phi}_i^\top \SD^\theta \tilde{\Phi}_i \in \mathbb{R}^{{\mathcal{Z}}_{\nik}\times {\mathcal{Z}}_{\nik}}$ is a diagonal matrix, and the $z_{\nik}$'th diagonal entry is the marginal probability of $z_{\nik}$ under $\sd^\theta$, which is non-zero by Assumption~\ref{assump:ergodicity}. Therefore, $\tilde{\Phi}_i^\top\SD^\theta \tilde{\Phi}_i  $ is invertable and matrix $\Pi_i^\theta = (\tilde{\Phi}_i^\top\SD^\theta \tilde{\Phi}_i)^{-1}\tilde{\Phi}^\top_i \SD^\theta$ is well defined. Further, the $z_{\nik}$'th row of $\Pi_i^\theta$ is in fact the conditional distribution of the full state $z$ given $z_{\nik}$. So, $\Pi_i^\theta$ must be a stochastic matrix and is non-expansive in infinity norm.  Let $\td_i^\theta(Q_i) = r_i - J_i(\theta) \mathbf{1} + P^\theta Q_i$ be the Bellman operator for reward $r_i$. 
    Further, define operator $g:\R^{\mathcal{Z}_\nik}\rightarrow \R^{\mathcal{Z}_\nik}$ given by $g( \tilde{Q}_i)=\Pi_i^\theta \td_i^\theta \tilde{\Phi}_i \tilde{Q}_i$ for $\tilde{Q}_i\in \R^{\mathcal{Z}_\nik}$. Then, \eqref{eq:critic:fixed_point_q_equi_1} is equvalent to the fixed point equation of operator $g$, $g(\bar{Q}_i^\theta) = \bar{Q}_i^\theta$. Our next claim studies the structure of the fixed points of $g$.
    
\textbf{Claim 2:} Define $\subs = \{Q_i\in\R^{\mathcal{Z}}: \E_{z\sim\sd^\theta }  Q_i(z) = 0 \}$, and $\ssubs = \{\tilde{Q}_i\in\R^{\mathcal{Z}_\nik}: \E_{z\sim\sd^\theta} \tilde{Q}_i(z_\nik) = 0  \} $. We claim that $g$ has a unique fixed point within $\ssubs$ which we denote as $\tilde{Q}_i^\theta$. Further, all fixed points of $g$ are the set $\{\tilde{Q}_i^\theta + c_i\mathbf{1}:c_i\in\R\}$.

\textbf{Proof of Claim 2.} We in fact show $g$ maps $\ssubs$ to $\ssubs$ and is a contraction in $\Vert\cdot\Vert_{\tilde{\SD}_i^\theta}$ norm when restricted to $\ssubs$, which will guarantee the existence and uniqueness of $\tilde{Q}_i^\theta$. To see this, we check the following steps.
    \begin{itemize}
        \item  $\tilde{\Phi}_i$ maps $\ssubs$ to $\subs$ and preserves metric from $\Vert\cdot\Vert_{\tilde{\SD}_i^\theta}$ to $\Vert\cdot\Vert_{\SD^\theta}$. To see this, note that $ \tilde{\Phi}_i^\top D^{\theta} \tilde{\Phi}_i   = \tilde{\SD}_i^\theta$. 
        \item $\td_i^\theta$ maps $\subs$ to $\subs$ and further, it is a $\mu_D$ contraction in $\Vert \cdot\Vert_{\SD^\theta}$ when restricted to $\subs$. To see this, note that for $Q_i,Q_i'\in\subs$, $\Vert \td_i^\theta(Q_i) - \td_i^\theta(Q_i') \Vert_{\SD^\theta} =\Vert P^\theta(Q_i - Q_i')\Vert_{\SD^\theta} =  \Vert (P^\theta- \mathbf{1}(\sd^\theta)^\top )(Q_i - Q_i')\Vert_{\SD^\theta}\leq \mu_D \Vert Q_i - Q_i'\Vert_{\SD^\theta}$, where we have used Assumption~\ref{assump:ergodicity}.  
        \item $\Pi_i^\theta$ maps $\subs$ to $\ssubs$ and is non-expensive from $\Vert \cdot\Vert_{D^\theta}$ to $\Vert\cdot\Vert_{\tilde{D}_i^\theta}$. To see this, notice that $(\Pi_i^\theta Q_i) (z_{\nik}) = \E_{z'\sim \pi^\theta(z'|z'_\nik = z_\nik )} Q_i(z')$. As such, when $Q_i\in\subs$, $\E_{z\sim\sd^\theta} (\Pi_i^\theta Q_i)(z_\nik) = \E_{z'\sim \pi^\theta} Q_i(z')=0$, which shows $\Pi_i^\theta Q_i\in\ssubs$. 
Finally, one can check $\Pi_i^\theta$ is non-expensive from $\Vert \cdot\Vert_{D^\theta}$ to $\Vert\cdot\Vert_{\tilde{D}_i^\theta}$ by noting $(\tilde{D}_i^\theta)^{1/2}  \Pi_i^\theta (\SD^\theta)^{-1/2}  = (\tilde{D}_i^\theta)^{-1/2} \tilde{\Phi}_i^\top  (\SD^\theta)^{1/2}$, the rows of which are orthornormal vectors.  
    \end{itemize}

Combining these relations, $g$ maps $\ssubs$ to itself and further, we have for $\tilde{Q}_i,\tilde{Q}_i'\in\ssubs$,
\begin{align*}
    \Vert g(\tilde{Q}_i) - g(\tilde{Q}_i')\Vert_{\tilde\SD_i^\theta}& =  \Vert \Pi_i^\theta (\td_i^\theta \tilde{\Phi}_i \tilde{Q}_i - \td_i^\theta\tilde{\Phi}_i \tilde{Q}_i' )\Vert_{\tilde\SD_i^\theta}\leq \Vert \td_i^\theta (\tilde{\Phi}_i \tilde{Q}_i )- \td_i^\theta( \tilde{\Phi}_i \tilde{Q}_i' )\Vert_{\SD^\theta}\\
    &\leq \mu_D \Vert\tilde{\Phi}_i (\tilde{Q}_i - \tilde{Q}_i')\Vert_{\SD^\theta} =\mu_D \Vert \tilde{Q}_i - \tilde{Q}_i'\Vert_{\tilde\SD_i^\theta},
\end{align*}
which shows $g$ is a contraction when restricted to $\ssubs$. This shows $g$ has a unique fixed point within $\ssubs$, which we denote by $\tilde{Q}_i^\theta$. Further, note for any $c_i\in\R$,
\begin{align*}
    g(\tilde{Q}_i + c_i\mathbf{1}) &=\Pi_i^\theta \td_i^\theta \tilde{\Phi}_i (\tilde{Q}_i + c_i\mathbf{1}) =\Pi_i^\theta \td_i^\theta ( \tilde{\Phi}_i \tilde{Q}_i + c_i\mathbf{1})\\
    & = \Pi_i^\theta [\td_i^\theta \tilde{\Phi}_i \tilde{Q}_i + c_i\mathbf{1} ] = g(\tilde{Q}_i) + c_i\mathbf{1}.
\end{align*}
Therefore, let $\tilde{Q}_i$ be a fixed point of $g$, then $\tilde{Q}_i - \mathbf{1}\E_{z\sim\sd^\theta} \tilde{Q}_i(z_{\nik})$ will be a fixed point of $g$ within $\ssubs$. As such, the set of fixed point of $g$ can be written in the form $\{\tilde{Q}_i^\theta + c_i\mathbf{1}: c_i\in\R\}$. \qedsymbol

We are now ready to prove Claim 1. 

\textbf{Proof of Claim 1.} By Claim 2, the set $\{\tilde{Q}_i^\theta + c_i\mathbf{1}: c_i\in\R\}$ characterizes the solution to equation \eqref{eq:critic:fixed_point_q_equi_1}. 
Therefore, \eqref{eq:critic:fixed_point_q_equi} must have a unique solution $\bar{Q}_i^\theta = \tilde{Q}_i^\theta - \tilde{Q}_i^\theta(\tilde{z}_\nik)\mathbf{1}$. 
Since \eqref{eq:critic:fixed_point_q_equi_1} is a overdetermined equation, we can essentially remove one row corresponding to $\tilde{z}_\nik$, and then plug in $\bar{Q}_i^\theta(\tilde{z}_\nik)=0$. 
This corresponds exactly to the equation in \eqref{eq:critic:fixed_point_q}. 
As such, the solution of \eqref{eq:critic:fixed_point_q} is the solution of \eqref{eq:critic:fixed_point_q_equi}, removing the entry in $\tilde{z}_\nik$. \qedsymbol

By Claim 1 and Claim 2, we have 
\begin{align*}
    \Phi_i\hat{Q}_i^\theta = \tilde{\Phi}_i\bar{Q}_i^\theta = \tilde{\Phi}_i \tilde{Q}_i^\theta - \tilde{Q}_i^\theta(\tilde{z}_\nik) \mathbf{1}.
\end{align*}
As such, we can set $c_i^\theta = \tilde{Q}_i^\theta(\tilde{z}_\nik)$ and get,
\begin{align}
 \Vert \Phi_i\hat{Q}_i^\theta + c_i^\theta\mathbf{1}- Q_i^\theta \Vert_{\SD^\theta} = \Vert \tilde{\Phi}_i\tilde{Q}_i^\theta - Q_i^\theta \Vert_{\SD^\theta}. \label{eq:critic:fixed_point_error_1}
\end{align}
Finally, we bound $\Vert \tilde{\Phi}_i\tilde{Q}_i^\theta - Q_i^\theta \Vert_{\SD^\theta}$. We have, using $\tilde{Q}_i^\theta$ is a fixed point of $g(\cdot)$ and $Q_i^\theta$ is a fixed point of $\td_i^\theta$,
\begin{align*}
    \tilde{\Phi}_i \tilde{Q}_i^{\theta} - Q_i^\theta& = \tilde{\Phi}_i \tilde{Q}_i^{\theta} - \tilde{\Phi}_i \Pi_i^\theta Q_i^\theta + \tilde{\Phi}_i \Pi_i^\theta Q_i^\theta - Q_i^\theta\\
    &=  \tilde{\Phi}_i \Pi_i^\theta\td_i^\theta\tilde{\Phi}_i \tilde{Q}_i^\theta - \tilde{\Phi}_i\Pi_i^\theta\td_i^\theta Q_i^\theta + \tilde{\Phi}_i \Pi_i^\theta Q_i^\theta - Q_i^\theta\\
    &= \tilde{\Phi}_i \Pi_i^\theta P^\theta(\tilde{\Phi}_i \tilde{Q}_i^\theta - Q_i^\theta ) + \tilde{\Phi}_i \Pi_i^\theta Q_i^\theta - Q_i^\theta.
\end{align*}
Note that by Assumption~\ref{assump:ergodicity}, \[\Vert \tilde{\Phi}_i \Pi_i^\theta P^\theta(\tilde{\Phi}_i \tilde{Q}_i^\theta - Q_i^\theta )\Vert_{\SD^\theta} = \Vert \tilde{\Phi}_i \Pi_i^\theta (P^\theta - \mathbf{1}(\sd^\theta)^\top)(\tilde{\Phi}_i \tilde{Q}_i^\theta - Q_i^\theta )\Vert_{\SD^\theta} \leq\mu_D \Vert \tilde{\Phi}_i \tilde{Q}_i^\theta - Q_i^\theta \Vert_{\SD^\theta}.\]
This shows $\Vert \tilde{\Phi}_i \tilde{Q}_i^{\theta} - Q_i^\theta \Vert_{\SD^\theta} \leq \mu_D \Vert \tilde{\Phi}_i \tilde{Q}_i^\theta - Q_i^\theta \Vert_{\SD^\theta} + \Vert\tilde{\Phi}_i \Pi_i^\theta Q_i^\theta - Q_i^\theta \Vert_{\SD^\theta}$, and hence, 
\begin{align}
    \Vert \tilde{\Phi}_i \tilde{Q}_i^{\theta} - Q_i^\theta \Vert_{\SD^\theta} \leq \frac{1}{1-\mu_D} \Vert\tilde{\Phi}_i \Pi_i^\theta Q_i^\theta - Q_i^\theta \Vert_{\SD^\theta}\leq \frac{1}{1-\mu_D} \Vert\tilde{\Phi}_i \Pi_i^\theta Q_i^\theta - Q_i^\theta \Vert_{\infty}. \label{eq:critic:fixed_point_error_2}
\end{align}
Next, recall that the $z_{\nik}$'s row of $\Pi_i^\theta$ is the distribution of the state-action pair $z$ conditioned on its $\nik$ coordinates being fixed to be $z_{\nik}$. We denote this conditional distribution of the states outside of $\nik$, $z_{\nminusik}$, given $z_{\nik}$, as $\sd^\theta(z_{\nminusik}|z_{\nik})$. With this notation,

\begin{align*}
    (\tilde{\Phi}_i\Pi_i^\theta Q_i^\theta)(z_{\nik}, z_{\nminusik}) =\sum_{z_{\nminusik}'}\sd^\theta(z_{\nminusik}'|z_{\nik} ) Q_i^\theta(z_{\nik},z_{\nminusik}').
\end{align*}
Therefore, we have,
\begin{align*}
  & |  (\tilde{\Phi}_i\Pi_i^\theta Q_i^\theta)(z_{\nik}, z_{\nminusik}) - Q_i^\theta (z_{\nik}, z_{\nminusik})|\\
  &= \bigg|  \sum_{z_{\nminusik}'}\sd^\theta(z_{\nminusik}'|z_{\nik} ) Q_i^\theta(z_{\nik},z_{\nminusik}') - \sum_{z_{\nminusik}'}\sd^\theta(z_{\nminusik}'|z_{\nik} ) Q_i^\theta(z_{\nik},z_{\nminusik}) \bigg|\\
  &\leq
 \sum_{z_{\nminusik}'}\sd^\theta(z_{\nminusik}'|z_{\nik} ) \big|Q_i^\theta(z_{\nik},z_{\nminusik}') -Q_i^\theta(z_{\nik},z_{\nminusik})\big| \\
  &\leq c \rhok,
\end{align*}
where the last inequality is due to the exponential decay property (cf. Definition~\ref{def:exp_decay} and Assumption~\ref{assump:exp_decay}). Therefore,
\[\Vert \tilde{\Phi}_i \Pi_i^\theta Q_i^\theta - Q_i^\theta\Vert_\infty\leq  c \rhok.\]
Combining the above with \eqref{eq:critic:fixed_point_error_2}, we get,
\[  \Vert \tilde{\Phi}_i \tilde{Q}_i^{\theta} - Q_i^\theta\Vert_{\SD^\theta} \leq \frac{  c \rhok}{1-\mu_{D}},  \]
which, when combined with \eqref{eq:critic:fixed_point_error_1}, leads to the desired result.

\section{Analysis of the Actor and Proof of Theorem~\ref{thm:convergence}} \label{sec:actor}
The proof is divided into three steps. Firstly, we decompose the error in the gradient approximation into three sequences. Then, we bound the three error sequences seperately. Finally, using the bounds, we prove Theorem~\ref{thm:convergence}.

\textbf{Step 1: Error decomposition. }Recall that the actor update can be written as $\theta_i(t+1) = \theta_i(t) + \eta_t \Gamma(\mathbf{\hat{Q}}^t) \hat{g}_i(t)$, where
\begin{align*}
    \hat{g}_i(t) =    \nabla_{\theta_i} \log \zeta_i^{\theta_i(t)}(a_i(t)|s_i(t)) \frac{1}{n} \sum_{j \in \nik}  \hat Q_j^t(s_{\njk}(t),a_{\njk}(t)),
\end{align*}
and $\Gamma(\mathbf{\hat{Q}}^t) = \frac{1}{1 + \max_{j} \Vert\hat{Q}_j^t \Vert_\infty}$ is a scalar whose purpose is to control the size of the approximated gradient. We also denote $\Gamma_t = \Gamma(\mathbf{\hat{Q}}^t)$. Recall that the true gradient of the objective function is given by (Lemma~\ref{lem:policy_gradient}),
\begin{align*}
    \nabla_{\theta_i} J(\theta(t)) = \E_{(s,a)\sim \sd^{\theta(t)} }\nabla_{\theta_i} \log \zeta_i^{\theta_i(t)}(a_i|s_i) \frac{1}{n} \sum_{j\in\mathcal{N}} Q_j^{\theta(t)} (s,a) .
\end{align*}
The error between the approximated gradient $\hat{g}_i(t)$ and the true gradient $\nabla_{\theta_i} J(\theta(t))$ can be decomposed into three terms,
\begin{align*}
   & \hat{g}_i(t) - \nabla_{\theta_i} J(\theta(t)) \\
    &=\nabla_{\theta_i} \log \zeta_i^{\theta_i(t)}(a_i(t)|s_i(t)) \frac{1}{n} \sum_{j \in \nik}  \hat Q_j^t(s_{\njk}(t),a_{\njk}(t)) - \E_{(s,a)\sim \sd^{\theta(t)}}\nabla_{\theta_i} \log \zeta_i^{\theta_i(t)}(a_i|s_i)  \frac{1}{n} \sum_{j \in \nik}  \hat Q_j^t(s_{\njk},a_{\njk}) \\
    &+  \E_{(s,a)\sim \sd^{\theta(t)}}\nabla_{\theta_i} \log \zeta_i^{\theta_i(t)}(a_i|s_i) \frac{1}{n} \Big[ \sum_{j \in \nik}  \hat Q_j^t(s_{\njk},a_{\njk}) - \sum_{j\in\nik} \hat{Q}_j^{\theta(t)}(s_{\njk},a_{\njk})  \Big]\\
    &+  \E_{(s,a)\sim \sd^{\theta(t)}}\nabla_{\theta_i} \log \zeta_i^{\theta_i(t)}(a_i|s_i) \frac{1}{n} \Big[ \sum_{j \in \nik}  \hat Q_j^{\theta(t)}(s_{\njk},a_{\njk}) - \sum_{j\in\mathcal{N}} Q_j^{\theta(t)} (s,a)  \Big]\\
    &:= e_i^1(t) + e_i^2(t) + e_i^3(t).
\end{align*}
We also use $e^1(t)$, $e^2(t)$, $e^3(t)$, $\hat{g}(t)$ to denote $e_i^1(t)$, $e_i^2(t)$, $e_i^3(t)$, $\hat{g}_i(t)$ stacked into a larger vector respectively. We next bound the three error sequences $e_i^1(t)$, $e_i^2(t)$ and $e_i^3(t)$.

\textbf{Step 2: Bounding error sequences.} In this step, we provide bounds on the error sequences. We will frequently use the following auxiliary result, whose proof is omitted as it is identical to that of \Cref{lem:critic:Gh_bounded}.  
\begin{lemma}\label{lem:q_ub}
We have for any $\theta$ and $i$, $\Vert Q_i^\theta\Vert_\infty \leq Q_{\max} = \frac{2c_\infty\bar{r}}{1-\mu_D} $. As a result, $\Vert \nabla_{\theta_i}J(\theta)\Vert \leq L_i Q_{\max}$ and $\Vert \nabla J(\theta)\Vert \leq L Q_{\max}$.
\end{lemma}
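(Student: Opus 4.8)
The plan is to bound $\Vert Q_i^\theta\Vert_\infty$ by the geometric-mixing argument already used in the proof of Lemma~\ref{lem:critic:Gh_bounded}(b), and then to read off the two gradient bounds from the policy gradient theorem. Starting from the definition~\eqref{eq:full_q}, $Q_i^\theta(s,a)=\E_\theta\big[\sum_{t=0}^\infty\big(r_i(s_i(t),a_i(t))-J_i(\theta)\big)\mid z(0)=z\big]$ with $z=(s,a)$, I would first interchange the infinite sum with the expectation and rewrite it as
\[
Q_i^\theta(s,a)=\sum_{t=0}^\infty\Big(\big[(P^\theta)^t r_i\big](z)-(\pi^\theta)^\top r_i\Big),
\]
where $r_i$ is regarded as a vector on $\mathcal Z$ depending only on its $i$-th coordinate and $J_i(\theta)=(\pi^\theta)^\top r_i$. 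Since $\pi^\theta$ is $P^\theta$-stationary, the $t$-th summand equals $\langle((P^\theta)^\top)^t(\mathbf e_z-\pi^\theta),r_i\rangle$ and $\mathbf 1^\top(\mathbf e_z-\pi^\theta)=0$, so Lemma~\ref{lem:mixing} together with $\Vert r_i\Vert_\infty\le\bar r$ (Assumption~\ref{assump:reward}) gives $\big|\big[(P^\theta)^t r_i\big](z)-(\pi^\theta)^\top r_i\big|\le\Vert((P^\theta)^\top)^t(\mathbf e_z-\pi^\theta)\Vert_1\,\bar r\le 2c_\infty\mu_D^t\bar r$. Summing the geometric series yields $\Vert Q_i^\theta\Vert_\infty\le\frac{2c_\infty\bar r}{1-\mu_D}=Q_{\max}$, which also retroactively justifies the sum--expectation interchange by dominated convergence.

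For the gradient bounds, I would apply the policy gradient theorem (Lemma~\ref{lem:policy_gradient}) and the localized-policy identity $\nabla_{\theta_i}\log\zeta^\theta(a|s)=\nabla_{\theta_i}\log\zeta_i^{\theta_i}(a_i|s_i)$ to write
\[
\nabla_{\theta_i}J(\theta)=\E_{(s,a)\sim\pi^\theta}\,Q^\theta(s,a)\,\nabla_{\theta_i}\log\zeta_i^{\theta_i}(a_i|s_i).
\]
Because $Q^\theta=\tfrac1n\sum_j Q_j^\theta$, the first part gives $|Q^\theta(s,a)|\le\tfrac1n\sum_j\Vert Q_j^\theta\Vert_\infty\le Q_{\max}$; combining this with $\Vert\nabla_{\theta_i}\log\zeta_i^{\theta_i}(a_i|s_i)\Vert\le L_i$ (Assumption~\ref{assump:gradient}) and $\Vert\E[\cdot]\Vert\le\E\Vert\cdot\Vert$ yields $\Vert\nabla_{\theta_i}J(\theta)\Vert\le L_iQ_{\max}$. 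Finally $\Vert\nabla J(\theta)\Vert^2=\sum_{i\in\mathcal N}\Vert\nabla_{\theta_i}J(\theta)\Vert^2\le Q_{\max}^2\sum_i L_i^2=L^2Q_{\max}^2$, i.e.\ $\Vert\nabla J(\theta)\Vert\le LQ_{\max}$.

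There is no real obstacle here: the statement is a routine estimate, essentially identical to Lemma~\ref{lem:critic:Gh_bounded}. The only places that need a little care are (i) justifying the exchange of the infinite sum and the expectation, which is immediate once the geometric tail bound is in hand, and (ii) correctly reading off $\Vert r_i\Vert_\infty=\sup_{s_i,a_i}|r_i(s_i,a_i)|\le\bar r$ after $r_i$ is extended to a function on the full state--action space $\mathcal Z$ so that $(P^\theta)^t r_i$ is defined. No hypotheses beyond Assumptions~\ref{assump:reward} and \ref{assump:gradient}, and Assumption~\ref{assump:ergodicity} through Lemma~\ref{lem:mixing}, are needed.
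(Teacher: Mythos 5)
Your proposal is correct and follows essentially the same route as the paper, which omits the proof precisely because it is the geometric-mixing argument of Lemma~\ref{lem:critic:Gh_bounded}(b): write $Q_i^\theta(z)=\sum_{t\ge 0}\langle((P^\theta)^\top)^t(\mathbf{e}_z-\pi^\theta),r_i\rangle$, bound each term by $2c_\infty\mu_D^t\bar r$ via Lemma~\ref{lem:mixing}, and sum. The gradient bounds via the policy gradient theorem, $\Vert\nabla_{\theta_i}\log\zeta_i^{\theta_i}\Vert\le L_i$, and stacking are likewise exactly what the paper intends.
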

We start with a bound related to error sequence $e_i^1(t)$, the proof of which is postponed to \Cref{subsec:actor:e1}. 
\begin{lemma} \label{lem:actor:e1}
Almost surely, for all $i$, we have $\sum_{t=0}^T \eta_t \langle \nabla_{\theta_i}J(\theta(t)), \Gamma_t e_i^1(t)\rangle $ converges to a finite limit as $T\rightarrow\infty$. 
\end{lemma}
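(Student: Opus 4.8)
The plan is to treat $e_i^1(t)$ as a Markovian-noise term with a slowly drifting parameter and apply the classical Poisson-equation-plus-martingale decomposition. Write $\psi_i(\theta,z):=\nabla_{\theta_i}\log\zeta_i^{\theta_i}(a_i|s_i)$ and, for a frozen policy $\theta$, a frozen scalar $\gamma\in(0,1]$ and a frozen tuple $Q=(Q_1,\dots,Q_n)$ of truncated $Q$-tables (dummy coordinates read as $0$), define $w_i^{\theta,Q,\gamma}(z):=\gamma\langle\nabla_{\theta_i}J(\theta),\psi_i(\theta,z)\rangle\,\tfrac1n\sum_{j\in\nik}Q_j(z_{\njk})$ and its stationary mean $\bar w_i^{\theta,Q,\gamma}:=\E_{z\sim\sd^\theta}w_i^{\theta,Q,\gamma}(z)$. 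With $\Psi_t:=(\theta(t),\hat{\mathbf{Q}}^t,\Gamma_t)$, one checks immediately that $\eta_t\langle\nabla_{\theta_i}J(\theta(t)),\Gamma_t e_i^1(t)\rangle=\eta_t\big(w_i^{\Psi_t}(z(t))-\bar w_i^{\Psi_t}\big)$, so the goal reduces to a.s.\ convergence of $\sum_t\eta_t\big(w_i^{\Psi_t}(z(t))-\bar w_i^{\Psi_t}\big)$. A useful preliminary is that $\Gamma_t\max_j\|\hat Q_j^t\|_\infty\le1$ and $\|\nabla_{\theta_i}J(\theta)\|\le L_iQ_{\max}$ (Lemma~\ref{lem:q_ub}), which yields the \emph{deterministic} bound $\sup_{z,t}|w_i^{\Psi_t}(z)|\le W_{\max}:=L_i^2Q_{\max}|\nik|/n$.

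First I would set up the Poisson equation: by Assumption~\ref{assump:ergodicity} and Lemma~\ref{lem:mixing}, $\hat w_i^{\theta,Q,\gamma}(z):=\sum_{\tau\ge0}\E_\theta[w_i^{\theta,Q,\gamma}(z(\tau))-\bar w_i^{\theta,Q,\gamma}\mid z(0)=z]$ converges geometrically, solves $\hat w_i^{\theta,Q,\gamma}=w_i^{\theta,Q,\gamma}-\bar w_i^{\theta,Q,\gamma}+P^\theta\hat w_i^{\theta,Q,\gamma}$, and satisfies $\|\hat w_i^{\theta,Q,\gamma}\|_\infty\le\tfrac{2c_\infty}{1-\mu_D}\|w_i^{\theta,Q,\gamma}\|_\infty$; in particular $\|\hat w_i^{\Psi_t}\|_\infty$ is deterministically bounded. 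Writing $\hat w_t:=\hat w_i^{\Psi_t}$ and $\delta_{t+1}:=\hat w_t(z(t+1))-[P^{\theta(t)}\hat w_t](z(t))$, the Poisson identity rewrites the summand as $\eta_t\big(\hat w_t(z(t))-\hat w_t(z(t+1))\big)+\eta_t\delta_{t+1}$. Since $\Psi_t$ is $\mathcal F_t$-measurable and $z(t+1)\sim P^{\theta(t)}(\cdot\mid z(t))$ given $\mathcal F_t$ (Assumption~\ref{assump:sa2}), $\delta_{t+1}$ is an $\mathcal F_{t+1}$-martingale difference with $|\delta_{t+1}|\le2\|\hat w_t\|_\infty$ deterministically, so $\sum_t\eta_t^2\E[\delta_{t+1}^2\mid\mathcal F_t]<\infty$ by Assumption~\ref{assump:stepsize}, hence $\sum_t\eta_t\delta_{t+1}$ converges a.s.\ by the martingale convergence theorem.

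It then remains to handle the telescoping piece. Abel summation gives
\[
\sum_{t=0}^{T}\eta_t\big(\hat w_t(z(t))-\hat w_t(z(t+1))\big)=\eta_0\hat w_0(z(0))-\eta_T\hat w_T(z(T+1))+\sum_{t=1}^{T}\eta_t\big(\hat w_t(z(t))-\hat w_{t-1}(z(t))\big)+\sum_{t=1}^{T}(\eta_t-\eta_{t-1})\hat w_{t-1}(z(t)),
\]
where the first term is a finite constant, the second vanishes since $\eta_T\to0$ and $\|\hat w_T\|_\infty$ is bounded, and the last is absolutely summable because $\sum_t|\eta_t-\eta_{t-1}|=\eta_0-\lim_t\eta_t<\infty$. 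For the remaining series I would bound the one-step drift of $\Psi_t$: $\|\theta(t)-\theta(t-1)\|\le L\eta_{t-1}$ (as in the verification of Assumption~\ref{assump:sa4}); $\|\hat Q_j^t-\hat Q_j^{t-1}\|_\infty\le\alpha_{t-1}(2\|\hat Q_j^{t-1}\|_\infty+2\bar r)$ from the critic update and $|\hat\mu_i^t|\le\bar r$; and $|\Gamma_t-\Gamma_{t-1}|\le\max_j\|\hat Q_j^t-\hat Q_j^{t-1}\|_\infty$. Since $\sup_t\|\hat Q_j^t\|_\infty<\infty$ a.s.\ (Theorem~\ref{thm:critic}(b)), the drift of $\Psi_t$ is $O_\omega(\eta_{t-1}+\alpha_{t-1})$. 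As $(\theta,Q,\gamma)\mapsto\hat w_i^{\theta,Q,\gamma}$ is linear in the combined variable $\gamma Q$ with bounded operator norm and is Lipschitz in $\theta$ with a $z$-uniform, locally $(Q,\gamma)$-uniform constant — this last point proved exactly like Lemma~\ref{lem:critic:lipschitz}(c), combining Lipschitz continuity of $P^\theta$, $\sd^\theta$ and $d^{\theta,t}-\sd^\theta$ from Lemma~\ref{lem:critic:lipschitz}(a) with that of $\psi_i(\theta,\cdot)$ and $\nabla J(\theta)$ from Assumption~\ref{assump:gradient} — we obtain $|\hat w_t(z(t))-\hat w_{t-1}(z(t))|\le C(\omega)(\eta_{t-1}+\alpha_{t-1})$ a.s.; then $\sum_t\eta_t(\eta_{t-1}+\alpha_{t-1})<\infty$ by AM--GM and square-summability, so the series converges a.s. Adding this to the martingale term yields the claim.

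The main obstacle is the Lipschitz estimate for the Poisson solution $\hat w_i^{\theta,Q,\gamma}$ in $\theta$, uniformly in $z$: it forces a re-run of the geometric-mixing/perturbation bookkeeping behind Lemma~\ref{lem:critic:lipschitz}(c). A secondary but pervasive issue is keeping every implicit constant genuinely a.s.\ finite — for this one should use the deterministic inequality $\Gamma_t\max_j\|\hat Q_j^t\|_\infty\le1$ wherever possible (boundedness of $w_i^{\Psi_t}$, of $\hat w_i^{\Psi_t}$, and the martingale variance bound) and invoke the merely a.s.\ bound $\sup_t\|\hat{\mathbf{Q}}^t\|<\infty$ only in the single place where a parameter-drift constant is unavoidable.
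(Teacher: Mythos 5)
Your proposal is correct and follows essentially the same route as the paper: it builds the Poisson equation for the frozen-parameter observable (the paper's $\hat{F}^\theta(\mathbf{\hat{Q}},z)$, your $\hat w_i^{\theta,Q,\gamma}$), and your Abel-summation split into a martingale term, boundary terms, step-size differences, and a parameter-drift term is exactly the paper's $a^1(t)$--$a^4(t)$ decomposition, handled with the same ingredients (geometric mixing for boundedness and $z$-uniform Lipschitz continuity as in Lemma~\ref{lem:critic:lipschitz}(c)/Lemma~\ref{lem:actor:hatF}, the drift bounds $\Vert\theta(t)-\theta(t-1)\Vert\le L\eta_{t-1}$ and $\Vert\hat Q_j^t-\hat Q_j^{t-1}\Vert_\infty=O_\omega(\alpha_{t-1})$, and square-summability of the step sizes). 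The only cosmetic difference is that you freeze $\Gamma_t$ as a separate scalar parameter $\gamma$ while the paper absorbs it as $\Gamma(\mathbf{\hat{Q}})$ into the Lipschitz-in-$\mathbf{\hat{Q}}$ estimate, which changes nothing substantive.
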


Then, we bound error sequence $e_i^2(t)$ in the following Lemma~\ref{lem:actor:e2}, which is an immediate consequence from our analysis of critic in Theorem~\ref{thm:critic} of \Cref{sec:critic}.
\begin{lemma}\label{lem:actor:e2}
Almost surely, $\lim_{t\rightarrow\infty} e_i^2(t) = 0$.
\end{lemma}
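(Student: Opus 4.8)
The plan is to obtain Lemma~\ref{lem:actor:e2} as a direct corollary of the critic analysis, specifically Theorem~\ref{thm:critic}(b). First I would record the two facts that do all the work: on a probability-one event, for every $j$ we have $\sup_{t\ge 0}\Vert \hat Q_j^t\Vert_\infty<\infty$ and $\hat Q_j^t - \hat Q_j^{\theta(t)}\to 0$ as $t\to\infty$. Since $\hat Q_j^t$ and $\hat Q_j^{\theta(t)}$ live in the finite-dimensional space $\R^{\hat{\mathcal{Z}}_{\nik}}$, the second statement is equivalent to $\Vert\hat Q_j^t - \hat Q_j^{\theta(t)}\Vert_\infty\to 0$. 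Because $\nik$ is a finite index set, intersecting the finitely many probability-one events over $j\in\nik$ still yields a probability-one event on which all these convergences hold simultaneously.

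Next I would bound $e_i^2(t)$ pointwise on that event. Recalling
\[
e_i^2(t) = \E_{(s,a)\sim \sd^{\theta(t)}}\Big[\nabla_{\theta_i} \log \zeta_i^{\theta_i(t)}(a_i|s_i)\,\tfrac{1}{n}\textstyle\sum_{j \in \nik}\big(\hat Q_j^t(s_{\njk},a_{\njk}) - \hat{Q}_j^{\theta(t)}(s_{\njk},a_{\njk})\big)\Big],
\]
I would apply the triangle inequality inside the expectation together with the score-function bound $\Vert\nabla_{\theta_i}\log\zeta_i^{\theta_i(t)}(a_i|s_i)\Vert\le L_i$ from Assumption~\ref{assump:gradient}, getting
\[
\Vert e_i^2(t)\Vert \le \E_{(s,a)\sim \sd^{\theta(t)}}\Big[\Vert\nabla_{\theta_i}\log\zeta_i^{\theta_i(t)}(a_i|s_i)\Vert\,\tfrac{1}{n}\textstyle\sum_{j\in\nik}\big|\hat Q_j^t(s_{\njk},a_{\njk}) - \hat Q_j^{\theta(t)}(s_{\njk},a_{\njk})\big|\Big] \le \tfrac{L_i}{n}\textstyle\sum_{j\in\nik}\Vert\hat Q_j^t - \hat Q_j^{\theta(t)}\Vert_\infty .
\]
The right-hand side is a finite sum of terms each tending to $0$ almost surely by the previous paragraph, hence $\Vert e_i^2(t)\Vert\to 0$ almost surely, which is exactly the claim.

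There is essentially no obstacle here: the lemma is a bookkeeping consequence of Theorem~\ref{thm:critic}(b), and the proof in the excerpt indeed states it is ``an immediate consequence'' of that theorem. The only points worth a sentence of care are (i) that the per-agent tracking result of Theorem~\ref{thm:critic}(b) must be invoked for each $j\in\nik$ (not just for $i$) and the corresponding null sets combined, using finiteness of $\nik$; and (ii) that the uniform score bound $L_i$ is what lets the expectation over $(s,a)\sim\sd^{\theta(t)}$ be discarded without any control on $\sd^{\theta(t)}$. The boundedness $\sup_t\Vert\hat Q_j^t\Vert_\infty<\infty$ is not strictly needed for this step but ensures all quantities above are finite before the limit is taken.
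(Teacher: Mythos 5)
Your argument is correct and matches the paper's proof: the paper likewise bounds $\Vert e_i^2(t)\Vert \leq \frac{L_i}{n}\sum_{j\in\nik}\Vert \hat{Q}_j^t - \hat{Q}_j^{\theta(t)}\Vert_\infty$ and invokes Theorem~\ref{thm:critic}(b), with your remarks on intersecting the finitely many null sets and on the role of the score bound $L_i$ being only minor elaborations of the same one-line estimate.
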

\begin{proof} We have 
    $\Vert e_i^2(t)\Vert \leq L_i \frac{1}{n} \sum_{j\in\nik} \Vert \hat{Q}_j^t - \hat{Q}_j^{\theta(t)}\Vert_\infty \rightarrow 0  $ as $t\rightarrow\infty$, where we have used part (b) of Theorem~\ref{thm:critic}. 
\end{proof}
Lastly, in Lemma~\ref{lem:actor:e3} we show that $e_i^3(t)$ can be bounded by a small constant as a result of Theorem~\ref{thm:critic}(a). The proof of Lemma~\ref{lem:actor:e3} is postponed to \Cref{subsec:actor:e3}. 
\begin{lemma}\label{lem:actor:e3}
Almost surely, for each $i$, $\Vert e_i^3(t)\Vert \leq L_i \frac{  c \rhok}{1-\mu_{D}}.$
\end{lemma}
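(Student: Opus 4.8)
The plan is to recognize that $e_i^3(t)$ is nothing but the bias $\hat h_i(\theta(t))-\nabla_{\theta_i}J(\theta(t))$ analyzed in Lemma~\ref{lem:truncated_pg}(b), with the critic's fixed point $\hat Q_j^{\theta(t)}$ playing the role of the truncated $Q$-function $\tilde Q_j^{\theta(t)}$, and to replay that proof with two adjustments: $\hat Q_j^{\theta(t)}$ is not literally of the weighted-average form \eqref{eq:truncated_q}, so the pointwise bound \eqref{appendix:truncated:eq:q_err} is unavailable and gets replaced by the $L^2(\sd^{\theta})$ guarantee of Theorem~\ref{thm:critic}(a); and that guarantee carries per-agent additive shifts $c_j^{\theta}$ which must be shown to cancel. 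So: fix $i$, condition on a realization of $\theta(t)$, and write $\theta=\theta(t)$. By Theorem~\ref{thm:critic}(a) there are scalars $c_j^{\theta}$ with $\E_{z\sim\sd^{\theta}}\big|\hat Q_j^{\theta}(z_{\njk})+c_j^{\theta}-Q_j^{\theta}(z)\big|^2\le\big(\tfrac{c\rhok}{1-\mu_D}\big)^2$, and Jensen's inequality turns this into the $L^1$ bound $\E_{z\sim\sd^{\theta}}\big|\hat Q_j^{\theta}(z_{\njk})+c_j^{\theta}-Q_j^{\theta}(z)\big|\le\tfrac{c\rhok}{1-\mu_D}$; this is the only way the critic analysis enters.

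The key algebraic step is the decomposition $e_i^3(t)=E_1-E_2$, where
\[
E_1=\E_{(s,a)\sim\sd^{\theta}}\nabla_{\theta_i}\log\zeta_i^{\theta_i}(a_i|s_i)\,\tfrac1n\sum_{j\in\mathcal{N}}\big(\hat Q_j^{\theta}(s_{\njk},a_{\njk})+c_j^{\theta}-Q_j^{\theta}(s,a)\big),
\]
\[
E_2=\E_{(s,a)\sim\sd^{\theta}}\nabla_{\theta_i}\log\zeta_i^{\theta_i}(a_i|s_i)\,\tfrac1n\Big(\sum_{j\in\nminusik}\hat Q_j^{\theta}(s_{\njk},a_{\njk})+\sum_{j\in\mathcal{N}}c_j^{\theta}\Big),
\]
obtained by adding and subtracting the constant $\tfrac1n\sum_{j\in\mathcal{N}}c_j^{\theta}$ and the tail $\tfrac1n\sum_{j\in\nminusik}\hat Q_j^{\theta}$ inside the bracket defining $e_i^3(t)$, and using $\sum_{j\in\nik}=\sum_{j\in\mathcal{N}}-\sum_{j\in\nminusik}$.

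Next I would show $E_2=0$. Its constant part $\tfrac1n\sum_{j\in\mathcal{N}}c_j^{\theta}$ drops out because $\E_{(s,a)\sim\sd^{\theta}}\nabla_{\theta_i}\log\zeta_i^{\theta_i}(a_i|s_i)=0$: the score depends only on $(s_i,a_i)$, so marginalizing $\sd^{\theta}$ over all other coordinates and using $\sum_{a_i}\nabla_{\theta_i}\zeta_i^{\theta_i}(a_i|s_i)=\nabla_{\theta_i}1=0$ gives it, exactly as in \eqref{eq:truncated_pg_bias}. The sum $\tfrac1n\sum_{j\in\nminusik}\hat Q_j^{\theta}(s_{\njk},a_{\njk})$ drops out term by term: since $\mathcal{G}$ is undirected, $j\in\nminusik$ forces $i\notin\njk$, so each $\hat Q_j^{\theta}(s_{\njk},a_{\njk})$ is independent of $a_i$, and marginalizing over $a_i$ last again invokes $\sum_{a_i}\nabla_{\theta_i}\zeta_i^{\theta_i}(a_i|s_i)=0$ as in \eqref{eq:truncated_pg_bias}. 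For $E_1$ I would pull the norm inside the expectation, use $\|\nabla_{\theta_i}\log\zeta_i^{\theta_i}(a_i|s_i)\|\le L_i$ (Assumption~\ref{assump:gradient}) and the $L^1$ bound above termwise, giving $\|e_i^3(t)\|=\|E_1\|\le L_i\,\tfrac1n\sum_{j\in\mathcal{N}}\E_{z\sim\sd^{\theta}}|\hat Q_j^{\theta}(z_{\njk})+c_j^{\theta}-Q_j^{\theta}(z)|\le L_i\,\tfrac{c\rhok}{1-\mu_D}$. Since the realization of $\theta(t)$ was arbitrary, the bound holds almost surely.

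I do not expect a genuine obstacle here: the combinatorial cancellation is the one already carried out for Lemma~\ref{lem:truncated_pg}(b). The two points that need care are the bookkeeping of the shifts $c_j^{\theta}$ (the reason $E_2$ as written lumps the constant part together with the tail sum, so that both pieces vanish for the two different reasons above) and the Jensen step needed to descend from the $L^2(\sd^{\theta})$ guarantee the critic analysis produces to the $L^1$ bound that the score-function cancellation argument actually consumes.
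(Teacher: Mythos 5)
Your proposal is correct and follows essentially the same route as the paper's proof: the $E_1-E_2$ split with $E_2=0$ is just a repackaging of the paper's step of extending the sum from $\nik$ to $\mathcal{N}$ and inserting the shifts $c_j^{\theta(t)}$ via the score-cancellation identity \eqref{eq:truncated_pg_bias}, after which Theorem~\ref{thm:critic}(a) is applied. The only (immaterial) difference is that you bound the score pointwise by $L_i$ and pass from $L^2(\sd^\theta)$ to $L^1$ by Jensen, whereas the paper applies Cauchy--Schwarz to the product directly; both yield the identical constant $L_i\frac{c\rhok}{1-\mu_D}$.
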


With these preparations, we are now ready to prove Theorem~\ref{thm:convergence}. 

\textbf{Step 3: Proof of Theorem~\ref{thm:convergence}.} Recall that $\beta_t = \eta_t \Gamma_t$. Note that by the definition of $\Gamma(\cdot)$, $\beta_t\leq \eta_t$. Further, almost surely there exists some constant $\gamma$ s.t. $\beta_t \geq \gamma \eta_t$, as by~Theorem~\ref{thm:critic}(b), almost surely, $\Vert \hat{Q}_j^t\Vert_\infty$ is uniformly upper bounded for all $t\geq 0, j\in\mathcal{N}$ by some constant. Since the objective function is $L'$-smooth, we have,
\begin{align*}
    &J(\theta(t+1))
    \\ &\geq J(\theta(t))+ \langle \nabla J(\theta(t), \beta_t \hat{g}(t)\rangle - \frac{L'}{2}\beta_t^2 \Vert \hat{g}(t)\Vert^2\\
    &= J(\theta(t))+\beta_t \Vert \nabla J(\theta(t))\Vert^2 + \beta_t\langle \nabla J(\theta(t)),e^1(t)+e^2(t)+e^3(t) \rangle - \frac{L'}{2}\beta_t^2 \Vert \hat{g}(t)\Vert^2.
    \end{align*}
Therefore, by a telescope sum we have,
\begin{align*}
J(\theta(T+1))   
&\geq J(\theta(0)) + \sum_{t=0}^T \beta_t \Vert \nabla J(\theta(t))\Vert^2  + \sum_{t=0}^T \eta_t\langle \nabla J(\theta(t)),\Gamma_t e^1(t) \rangle+ \sum_{t=0}^T \beta_t\langle \nabla J(\theta(t)),e^2(t) \rangle \\
&\quad +\sum_{t=0}^T \beta_t\langle \nabla J(\theta(t)),e^3(t) \rangle  - \sum_{t=0}^T \frac{L'}{2}\beta_t^2\Vert\hat{g}(t)\Vert^2\\
&\geq \sum_{t=0}^T \beta_t \Vert \nabla J(\theta(t))\Vert^2  + \sum_{t=0}^T \eta_t\langle \nabla J(\theta(t)),\Gamma_t e^1(t) \rangle - \sum_{t=0}^T \beta_t   LQ_{\max}  \Vert e^2(t) \Vert \\
&\quad - \sum_{t=0}^T\beta_t \Vert \nabla J(\theta(t))\Vert \Vert e^3(t) \Vert   - \sum_{t=0}^T \frac{L'}{2}\eta_t^2 L^2,
\end{align*}
where in the last step, we have used $\Vert\nabla J(\theta(t)) \Vert  \leq LQ_{\max}$ (cf. \Cref{lem:q_ub}); we have also used that $\Vert \Gamma_t \hat{g}(t)\Vert \leq L$. Then, rearranging the above inequality, we get,
\begin{align}
   &\frac{ \sum_{t=0}^T \beta_t (\Vert \nabla J(\theta(t))\Vert^2 - \Vert \nabla J(\theta(t))\Vert\Vert e^3(t)\Vert ) }{\sum_{t=0}^T \beta_t} \nonumber \\
   &\leq \frac{ J(\theta(T+1))- \sum_{t=0}^T \eta_t\langle \nabla J(\theta(t)),\Gamma_t e^1(t) \rangle + \sum_{t=0}^T \frac{L'}{2}\eta_t^2 L^2}{\gamma \sum_{t=0}^T \eta_t}   + L Q_{\max}\frac{\sum_{t=0}^T \beta_t  \Vert e^2(t)\Vert}{\sum_{t=0}^T \beta_t}  \nonumber  \\
   &\leq \frac{ \bar{r}- \sum_{t=0}^T \eta_t\langle \nabla J(\theta(t)),\Gamma_t e^1(t) \rangle + \sum_{t=0}^T \frac{L'}{2}\eta_t^2 L^2 }{\gamma \sum_{t=0}^T \eta_t}  + L Q_{\max} \frac{\sum_{t=0}^T \beta_t \Vert e^2(t)\Vert}{\sum_{t=0}^T \beta_t}  ,\label{eq:actor:final_error}
\end{align}
where we have used $J(\theta(T+1)) \leq \bar{r} $ (Assumption~\ref{assump:reward}) and $\beta_t \geq \gamma \eta_t$. 
In \eqref{eq:actor:final_error}, when $T\rightarrow\infty$, the first term on the right hand side goes to zero as its denominator goes to infinity (Assumption \ref{assump:stepsize}) while its nominator is bounded (using Lemma~\ref{lem:actor:e1} and $\sum_{t=0}^\infty \eta_t^2 < \infty$); the second term goes to zero as $\Vert e^2(t)\Vert \rightarrow 0 $ (Lemma~\ref{lem:actor:e2}) and $\sum_{t=0}^T \beta_t \geq \gamma \sum_{t=0}^T \eta_t \rightarrow\infty$. So the right hand side of \eqref{eq:actor:final_error} converges to $0$. 
From this, we have by Lemma~\ref{lem:actor:e3},
\[\liminf_{t\rightarrow\infty} \Vert J(\theta(t))\Vert  \leq \sup_{t\geq 0} \Vert e^3(t)\Vert \leq    L \frac{  c \rhok}{1-\mu_{D}}, \]
because otherwise, the left hand side of \eqref{eq:actor:final_error} will be positive and bounded away from zero as $T\rightarrow\infty$, a contradction.

\subsection{Proof of Lemma~\ref{lem:actor:e1}}\label{subsec:actor:e1}
    We fix $i$ and define for $z=(s,a)$, $\mathbf{\hat{Q}} = \{\hat{Q}_i\}_{i=1}^n$, 
\[F^\theta(\mathbf{\hat{Q}},z) =\langle \nabla_{\theta_i} J(\theta), \Gamma(\mathbf{\hat{Q}}) \nabla_{\theta_i} \log \zeta_i^{\theta_i}(a_i|s_i) \frac{1}{n} \sum_{j \in \nik}  \hat Q_j(s_{\njk},a_{\njk})\rangle, \]
and $\bar{F}^\theta(\mathbf{\hat{Q}}) = \E_{z\sim\pi^\theta} F^\theta(\mathbf{\hat{Q}},z)$. We also define $\hat{F}^\theta(\mathbf{\hat{Q}},\cdot)$ to be the solution of the Poission equation:
    \[\hat{F}^\theta(\mathbf{\hat{Q}},z) = F^\theta(\mathbf{\hat{Q}},z)-\bar{F}^\theta(\mathbf{\hat{Q}}) + P^\theta \hat{F}^\theta(\mathbf{\hat{Q}},z) =\E_\theta \Big[\sum_{t=0}^\infty (F^\theta(\mathbf{\hat{Q}},z(t))-\bar{F}^\theta(\mathbf{\hat{Q}})) \Big| z(0) = z\Big],  \]
where $P^\theta$ is the transition kernal on the state-action pair under policy $\theta$, and $P^\theta \hat{F}^\theta(\mathbf{\hat{Q}},z) = \E_{z'\sim P^\theta(\cdot|z)} \hat{F}^\theta(\mathbf{\hat{Q}},z')$. 

One can easily check that $\hat{F}^\theta(\cdot,\cdot)$ satisfies the following properties, the proof of which is deferred to the end of this subsection.
         \begin{lemma}\label{lem:actor:hatF}
     There exists $C_F,L_{\theta,F}, L_{Q,F}>0$ s.t. for all $\theta,z$, $| \hat{F}^\theta(\mathbf{\hat{Q}},z) | \leq C_F$ and $\hat{F}^\theta(\mathbf{\hat{Q}},z)$ is $L_{\theta,F}$-Lipschitz continuous in $\theta$ in Euclidean norm, and $L_{Q,F}$-Lipschitz continuous in $\mathbf{\hat{Q}}$ in the sense that,  
     \[|\hat{F}^\theta(\mathbf{\hat{Q}},z) - \hat{F}^\theta(\mathbf{\hat{Q}}',z)|\leq L_{Q,F}\sum_{j=1}^n \Vert \hat{Q}_j - \hat{Q}_j'\Vert_\infty .\]
     \end{lemma}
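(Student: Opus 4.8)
The plan is to establish the three claims in turn, reusing the geometric-mixing machinery already developed for the critic analysis. The starting observation is that the scalar factor $\Gamma(\mathbf{\hat{Q}})\frac1n\sum_{j\in\nik}\hat Q_j(s_{\njk},a_{\njk})$ appearing in $F^\theta(\mathbf{\hat{Q}},z)$ is uniformly bounded by $\frac{|\nik|}{n}\leq 1$, since $\Gamma(\mathbf{\hat{Q}})\Vert\hat Q_j\Vert_\infty\leq \frac{\max_k\Vert\hat Q_k\Vert_\infty}{1+\max_k\Vert\hat Q_k\Vert_\infty}\leq 1$. Combining this with Assumption~\ref{assump:gradient} ($\Vert\nabla_{\theta_i}\log\zeta_i^{\theta_i}\Vert\leq L_i$) and Lemma~\ref{lem:q_ub} ($\Vert\nabla_{\theta_i}J(\theta)\Vert\leq L_iQ_{\max}$) gives $|F^\theta(\mathbf{\hat{Q}},z)|\leq L_i^2 Q_{\max}$ for all $\theta,z,\mathbf{\hat{Q}}$, and hence $|\bar F^\theta(\mathbf{\hat{Q}})|\leq L_i^2 Q_{\max}$ as well.

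For the uniform bound on $\hat F$, I would write $\hat F^\theta(\mathbf{\hat{Q}},z)=\sum_{t=0}^\infty\big(\E_{z'\sim d^{\theta,t}}F^\theta(\mathbf{\hat{Q}},z')-\E_{z'\sim\pi^\theta}F^\theta(\mathbf{\hat{Q}},z')\big)$ with $d^{\theta,0}=\delta_z$, exactly as in the proof of Lemma~\ref{lem:critic:Gh_bounded}(b). Applying Lemma~\ref{lem:mixing} to the mean-zero vector $d^{\theta,t}-\pi^\theta$ yields $\Vert d^{\theta,t}-\pi^\theta\Vert_1\leq 2c_\infty\mu_D^t$, so each summand is at most $2c_\infty\mu_D^t L_i^2 Q_{\max}$, and summing the geometric series gives $|\hat F^\theta(\mathbf{\hat{Q}},z)|\leq C_F:=\frac{2c_\infty L_i^2 Q_{\max}}{1-\mu_D}$, independent of $\theta,z,\mathbf{\hat{Q}}$. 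For Lipschitz continuity in $\mathbf{\hat{Q}}$, the key step is to show $\mathbf{\hat{Q}}\mapsto\Gamma(\mathbf{\hat{Q}})\hat Q_j(z)$ is Lipschitz in the norm $\Vert\mathbf{\hat{Q}}-\mathbf{\hat{Q}}'\Vert_\ast:=\sum_{k}\Vert\hat Q_k-\hat Q_k'\Vert_\infty$: with $m=\max_k\Vert\hat Q_k\Vert_\infty$ and $m'=\max_k\Vert\hat Q_k'\Vert_\infty$ one has $|\Gamma(\mathbf{\hat{Q}})-\Gamma(\mathbf{\hat{Q}}')|=\frac{|m-m'|}{(1+m)(1+m')}$ and $|m-m'|\leq\Vert\mathbf{\hat{Q}}-\mathbf{\hat{Q}}'\Vert_\ast$, so an add-and-subtract argument (using $\frac{m}{1+m}\leq 1$ and $\Gamma(\mathbf{\hat{Q}}')\leq 1$) gives $|\Gamma(\mathbf{\hat{Q}})\hat Q_j(z)-\Gamma(\mathbf{\hat{Q}}')\hat Q_j'(z)|\leq 2\Vert\mathbf{\hat{Q}}-\mathbf{\hat{Q}}'\Vert_\ast$. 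Averaging over $j\in\nik$ and multiplying by the $\mathbf{\hat{Q}}$-independent bounded factors $\nabla_{\theta_i}\log\zeta_i^{\theta_i}$ and $\nabla_{\theta_i}J(\theta)$ shows $|F^\theta(\mathbf{\hat{Q}},z)-F^\theta(\mathbf{\hat{Q}}',z)|\leq 2L_i^2 Q_{\max}\Vert\mathbf{\hat{Q}}-\mathbf{\hat{Q}}'\Vert_\ast$ uniformly; feeding this into the same mixing representation (now with $g=F^\theta(\mathbf{\hat{Q}},\cdot)-F^\theta(\mathbf{\hat{Q}}',\cdot)$) gives $|\hat F^\theta(\mathbf{\hat{Q}},z)-\hat F^\theta(\mathbf{\hat{Q}}',z)|\leq\frac{2c_\infty}{1-\mu_D}\cdot 2L_i^2 Q_{\max}\sum_{j=1}^n\Vert\hat Q_j-\hat Q_j'\Vert_\infty=:L_{Q,F}\sum_{j=1}^n\Vert\hat Q_j-\hat Q_j'\Vert_\infty$.

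The Lipschitz continuity in $\theta$ is the step I expect to require the most care, and it mirrors the proof of Lemma~\ref{lem:critic:lipschitz}(c). First, $F^\theta(\mathbf{\hat{Q}},z)$ itself is Lipschitz in $\theta$ with a constant independent of $z$ and $\mathbf{\hat{Q}}$: it is the product of the bounded, $\theta$-independent scalar $\Gamma(\mathbf{\hat{Q}})\frac1n\sum_{j\in\nik}\hat Q_j(s_{\njk},a_{\njk})$ with $\langle\nabla_{\theta_i}J(\theta),\nabla_{\theta_i}\log\zeta_i^{\theta_i}(a_i|s_i)\rangle$, and the latter is a product of two quantities that are bounded ($L_iQ_{\max}$ and $L_i$) and Lipschitz in $\theta$ ($\nabla J$ is $L'$-Lipschitz by Assumption~\ref{assump:gradient}, $\nabla_{\theta_i}\log\zeta_i^{\theta_i}$ is $L_i'$-Lipschitz), hence Lipschitz. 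Then I would write $\hat F^\theta(\mathbf{\hat{Q}},z)=\sum_{t=0}^\infty\sum_{z'}\big(d^{\theta,t}(z')-\pi^\theta(z')\big)F^\theta(\mathbf{\hat{Q}},z')$ and, comparing $\theta$ with $\bar\theta$, split each summand as in Lemma~\ref{lem:critic:lipschitz}(c) into a term controlled by $\Vert d^{\theta,t}-\pi^\theta-(d^{\bar\theta,t}-\pi^{\bar\theta})\Vert_1\cdot\sup_{z'}|F^\theta(\mathbf{\hat{Q}},z')|$ and one controlled by $\Vert d^{\bar\theta,t}-\pi^{\bar\theta}\Vert_1\cdot\sup_{z'}|F^\theta(\mathbf{\hat{Q}},z')-F^{\bar\theta}(\mathbf{\hat{Q}},z')|$. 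Inequality~\eqref{eq:critic:d_lipschitz} already bounds the first norm by $\frac{5c_\infty^2 L_P}{1-\mu_D}(\frac{1+\mu_D}{2})^t\Vert\theta-\bar\theta\Vert$ and Lemma~\ref{lem:mixing} bounds the second by $2c_\infty\mu_D^t$; together with the uniform bound and uniform Lipschitz bound on $F^\theta$ just established, every summand is a geometrically decaying sequence times $\Vert\theta-\bar\theta\Vert$, so the series converges to $L_{\theta,F}\Vert\theta-\bar\theta\Vert$ with $L_{\theta,F}$ independent of $z$ and $\mathbf{\hat{Q}}$. The only genuine subtlety is verifying that all the intermediate boundedness and Lipschitz constants can be taken uniform in $z$ and $\mathbf{\hat{Q}}$, which holds precisely because the $\mathbf{\hat{Q}}$-dependence of $F^\theta$ is confined to the bounded scalar factor $\Gamma(\mathbf{\hat{Q}})\frac1n\sum_{j\in\nik}\hat Q_j$.
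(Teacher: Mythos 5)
Your proposal is correct and follows essentially the same route as the paper's proof: you bound $F^\theta$ uniformly via $\Gamma(\mathbf{\hat{Q}})\frac{1}{n}\sum_{j\in\nik}\hat Q_j\leq 1$ and Lemma~\ref{lem:q_ub}, pass to $\hat F^\theta$ through the series $\sum_t (d^{\theta,t}-\pi^\theta)$ with the geometric mixing bound of Lemma~\ref{lem:mixing}, and obtain the two Lipschitz properties by first proving them for $F^\theta$ (the $\Gamma$-difference bound for $\mathbf{\hat{Q}}$, the product rule with Assumption~\ref{assump:gradient} for $\theta$) and then splitting each summand exactly as in Lemma~\ref{lem:critic:lipschitz}(c), using \eqref{eq:critic:d_lipschitz}. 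The only differences are immaterial constants.
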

    
With this definition, we can decompose $\langle \nabla_{\theta_i}J(\theta(t)), \Gamma_t e_i^1(t)\rangle $ into the following terms, 
    \begin{align*}
        \langle \nabla_{\theta_i}J(\theta(t)), \Gamma_t e_i^1(t)\rangle &= F^{\theta(t)}(\mathbf{\hat{Q}}^t,z(t)) - \bar{F}^{\theta(t)}(\mathbf{\hat{Q}}^t)\\
        &= \hat{F}^{\theta(t)}(\mathbf{\hat{Q}}^t,z(t)) - P^{\theta(t)}\hat{F}^{\theta(t)}(\mathbf{\hat{Q}}^t,z(t))\\
        &= \hat{F}^{\theta(t)}(\mathbf{\hat{Q}}^t,z(t+1)) - P^{\theta(t)}\hat{F}^{\theta(t)}(\mathbf{\hat{Q}}^t,z(t))\\
        & + \hat{F}^{\theta(t-1)}(\mathbf{\hat{Q}}^{t-1},z(t)) - \hat{F}^{\theta(t)}(\mathbf{\hat{Q}}^t,z(t+1))\\
        & + \hat{F}^{\theta(t)}(\mathbf{\hat{Q}}^t, z(t)) - \hat{F}^{\theta(t-1)}(\mathbf{\hat{Q}}^{t-1},z(t))\\
        &= a^1(t) + a^2(t) + a^3(t) + a^4(t),
    \end{align*}
    where we have defined, 
    \begin{align*}
        a^1(t) &= \hat{F}^{\theta(t)}(\mathbf{\hat{Q}}^t,z(t+1)) - P^{\theta(t)}\hat{F}^{\theta(t)}(\mathbf{\hat{Q}}^t,z(t)),\\
        a^2(t) &= \frac{1}{\eta_t}(\eta_{t-1}\hat{F}^{\theta(t-1)}(\mathbf{\hat{Q}}^{t-1},z(t)) -\eta_t \hat{F}^{\theta(t)}(\mathbf{\hat{Q}}^t,z(t+1))),\\
        a^3(t) & = \frac{\eta_t - \eta_{t-1}}{\eta_t}\hat{F}^{\theta(t-1)}(\mathbf{\hat{Q}}^{t-1},z(t)),\\
        a^4(t) &= \hat{F}^{\theta(t)}(\mathbf{\hat{Q}}^t, z(t)) - \hat{F}^{\theta(t-1)}(\mathbf{\hat{Q}}^{t-1},z(t)).
    \end{align*}
    
    With the decomposition, in what follows we show that $\sum_{t=1}^{T} \eta_t a^j(t)$ converges to a finite limit almost surely for $j=1,2,3,4$, which together will conclude the proof of this lemma.
     
    For $a^1(t)$, let $\mathcal{F}_t$ be the $\sigma$-algebra generated by $\{\theta(k),\mathbf{\hat{Q}}^k, z(k)\}_{k\leq t}$. Then, $a^1(t)$ is $\mathcal{F}_{t+1}$-measurable and $\E a^1(t)|\mathcal{F}_t=0 $. As such, $\sum_{t=1}^{T} \eta_t a^1(t)$ is a martingale process, and further,
     \begin{align*}
       \E  | \sum_{t=1}^{T} \eta_t a^1(t) |^2 =   \sum_{t=1}^T \eta_t^2 \E | a^1(t)|^2 \leq 4 C_F^2 \sum_{t=0}^\infty \eta_t^2<\infty.
     \end{align*}
     As such, by martingale convergence theorem, $ \sum_{t=1}^{T} \eta_t a^1(t)$ converges to a finite limit as $T\rightarrow \infty$ almost surely.
     
     For $a^2(t)$, note that \[\sum_{t=1}^{T} \eta_t a^2(t)= \eta_0\hat{F}^{\theta(0)}(\mathbf{\hat{Q}}^0,z(1)) - \eta_{T}\hat{F}^{\theta(T)}(\mathbf{\hat{Q}}^{T},z(T+1)),\] which also converges to a finite limit as $T\rightarrow\infty$, almost surely. 
     
     For $a^3(t)$, since the step size $\eta_t$ is non-increasing, we have, 
     \begin{align*}
          \sum_{t=1}^{T}\eta_t |a^3(t)| = \sum_{t=1}^{T} ( \eta_{t-1}-\eta_t ) |\hat{F}^{\theta(t-1)}(\mathbf{\hat{Q}}^{t-1},z(t)) |
         \leq C_F (\eta_0 - \eta_{T}) < C_F \eta_0.
     \end{align*}
     As such  $\sum_{t=1}^{T}\eta_t a^3(t)$ converges to a finite limit almost surely. 
     
     Finally, for $a^4(t)$, we note that by the Lipschitz property of $\hat{F}^\theta(\mathbf{\hat{Q}},z)$ in \Cref{lem:actor:hatF},
     \begin{align*}
         |a^4(t)| &\leq |\hat{F}^{\theta(t)}(\mathbf{\hat{Q}}^t, z(t)) -\hat{F}^{\theta(t-1)}(\mathbf{\hat{Q}}^t, z(t))| + | \hat{F}^{\theta(t-1)}(\mathbf{\hat{Q}}^t, z(t)) - \hat{F}^{\theta(t-1)}(\mathbf{\hat{Q}}^{t-1},z(t))|\\
         &\leq L_{\theta,F}\Vert \theta(t) - \theta(t-1)\Vert + L_{Q,F} \sum_{j=1}^n \Vert \hat{Q}_j^t - \hat{Q}_j^{t-1}\Vert_\infty \\
         &\leq \bar{L}_{\theta,F} \eta_{t-1} + \bar{L}_{Q,F}\alpha_{t-1},
     \end{align*}
     for some constant $\bar{L}_{\theta,F}$ and $\bar{L}_{Q,F}$ almost surely. Here we have used $\Vert \theta(t) - \theta(t-1)\Vert \leq \eta_{t-1}L$ (check how we verified Assumption~\ref{assump:sa4} in \Cref{subsec:proof_critic}). Further, we have used $\Vert \hat{Q}_j^t - \hat{Q}_j^{t-1}\Vert_\infty \leq \alpha_{t-1}(2\bar{r} + 2 \Vert \hat{Q}_j^{t-1}\Vert_\infty)$ (cf. equation \eqref{eq:critic_update_2} in \Cref{subsec:proof_critic}), and the fact that $\Vert \hat{Q}_j^{t-1}\Vert_\infty$ is upper bounded uniformly over $t$ almost surely, cf. Theorem~\ref{thm:critic}.  As such, we have 
     \[\sum_{t=1}^{T} \eta_t |a^4(t)| \leq \sum_{t=1}^\infty \Big(\bar{L}_{\theta,F} \eta_t \eta_{t-1} + \bar{L}_{Q,F} \alpha_{t-1}\eta_t\Big) < \infty. \]
     As a result, we have, $\sum_{t=1}^{T} \eta_t a^4(t)$ converges to a finite limit as $T\rightarrow\infty$, almost surely.  This concludes the proof of \Cref{lem:actor:e1}. 
     
     Finally, we provide the proof for Lemma~\ref{lem:actor:hatF}.
     
\textit{Proof of Lemma~~\ref{lem:actor:hatF}.}
    Clearly, $| F^\theta (\mathbf{\hat{Q}},z)|\leq \Vert\nabla_{\theta_i}J(\theta)\Vert L_i  \leq L_i^2  Q_{\max}:= C_F' $, where we have used $\Vert\nabla_{\theta_i} J(\theta)\Vert\leq L_i Q_{\max}$ (cf. \Cref{lem:q_ub}). Using the same argument as in Lemma~\ref{lem:critic:Gh_bounded} (b), we have $|\hat{F}^\theta (\mathbf{\hat{Q}},z)|\leq C_F = \frac{2c_\infty}{1-\mu_D} C_F' $. Next, note that,
    \begin{align*}
        &|F^\theta (\mathbf{\hat{Q}},z) - F^{\bar{\theta}} (\mathbf{\hat{Q}},z)| \\
        &\leq | \langle \nabla_{\theta_i} J(\theta) - \nabla_{\theta_i} J(\bar{\theta}), \Gamma(\mathbf{\hat{Q}}) \nabla_{\theta_i} \log \zeta_i^{\theta_i}(a_i|s_i) \frac{1}{n} \sum_{j \in \nik}  \hat Q_j(s_{\njk},a_{\njk})\rangle |\\
        &\quad +  | \langle  \nabla_{\theta_i} J(\bar{\theta}), \Gamma(\mathbf{\hat{Q}}) (\nabla_{\theta_i} \log \zeta_i^{\theta_i}(a_i|s_i)- \nabla_{\theta_i} \log\zeta_i^{\bar{\theta}_i}(a_i|s_i)  ) \frac{1}{n} \sum_{j \in \nik}  \hat Q_j(s_{\njk},a_{\njk})\rangle |\\
        &\leq L' L_i  \Vert \theta - \bar\theta\Vert + L_iQ_{\max} L_i' \Vert \theta - \bar\theta\Vert:= L_{\theta,F}' \Vert\theta - \bar\theta\Vert .
    \end{align*}
    The above shows $F^\theta (\mathbf{\hat{Q}},z)$ is Lipschitz in $\theta$. Then, using a similar argument as Lemma~\ref{lem:critic:lipschitz} (c), we can show $\hat{F}^\theta (\mathbf{\hat{Q}},z)$ is Lipschitz continuous in $\theta$. To do this, we fix any initial $z$, let $d^{\theta,t}$ be the distribution of $z(t)$ under policy $\theta$. 
    Then, equation \eqref{eq:critic:d_lipschitz} in the proof of \Cref{lem:critic:lipschitz} shows that $d^{\theta,t} - \pi^\theta$ is Lipschitz in $\theta$ with Lipschitz constant geometrically decaying in $t$, i.e. for some $L_d>0$,
    \[ \Vert (d^{\theta,t} - \pi^\theta) - (d^{\bar\theta,t} - \pi^{\bar\theta}) \Vert_1 \leq L_d (\frac{\mu_D+1}{2})^t \Vert \theta - \bar\theta\Vert. \]
Note that by definition, 
\begin{align}
     \hat{F}^\theta (\mathbf{\hat{Q}},z)  &= \E_\theta \Big[\sum_{t=0}^\infty (F^\theta(\mathbf{\hat{Q}},z(t))-\bar{F}^\theta(\mathbf{\hat{Q}})) \Big| z(0) = z\Big]\nonumber \\
     &= \sum_{t=0}^\infty [ \E_{z'\sim d^{\theta,t}}F^\theta(\mathbf{\hat{Q}},z')  -\E_{z'\sim \pi^\theta}F^\theta(\mathbf{\hat{Q}},z')] \nonumber \\
     &=\sum_{t=0}^\infty \sum_{z'\in\mathcal{Z}} (d^{\theta,t}(z') - \pi^\theta(z')) F^\theta(\mathbf{\hat{Q}},z'). \label{eq:actor:hatF_dpi}
\end{align}
As such,
\begin{align*}
    | \hat{F}^\theta (\mathbf{\hat{Q}},z) - \hat{F}^{\bar\theta} (\mathbf{\hat{Q}},z) | 
    &\leq \sum_{t=0}^\infty \sum_{z'\in\mathcal{Z}} \Big| (d^{\theta,t}(z') - \pi^\theta(z')) F^\theta(\mathbf{\hat{Q}},z') - (d^{\bar\theta,t}(z') - \pi^{\bar\theta}(z')) F^{\bar\theta}(\mathbf{\hat{Q}},z') \Big|\\
    &\leq  \sum_{t=0}^\infty \sum_{z'\in\mathcal{Z}} \Big[ |d^{\theta,t}(z') - \pi^\theta(z') - (d^{\bar\theta,t}(z') - \pi^{\bar\theta}(z')) | | F^\theta(\mathbf{\hat{Q}},z') | \\
    &\qquad + |d^{\bar\theta,t}(z') - \pi^{\bar\theta}(z')| | F^\theta(\mathbf{\hat{Q}},z')- F^{\bar\theta}(\mathbf{\hat{Q}},z')| \Big]\\
    &\leq \sum_{t=0}^\infty \Big[\Vert d^{\theta,t} - \pi^\theta - (d^{\bar\theta,t} - \pi^{\bar\theta})\Vert_1 C_F' + \Vert d^{\bar\theta,t} - \pi^{\bar\theta}\Vert_1  L_{\theta,F}' \Vert\theta - \bar\theta\Vert \Big]\\
    &\leq \sum_{t=0}^\infty \Big[  L_d C_F' (\frac{1+\mu_D}{2})^t \Vert \theta - \bar{\theta}\Vert  + 2 c_\infty \mu_D^t L_{\theta,F}' \Vert\theta - \bar\theta\Vert \Big] \\
    &\leq L_{\theta,F}\Vert\theta - \bar{\theta}\Vert,
\end{align*}
for some $L_{\theta,F}>0$. This shows that $\hat{F}^\theta (\mathbf{\hat{Q}},z)$ is Lipschitz continuous in $\theta$. 
    
Finally, we show $\hat{F}^\theta (\mathbf{\hat{Q}},z)$ is Lipschitz in $\mathbf{\hat{Q}}$. Note that,
    \begin{align*}
        |F^\theta (\mathbf{\hat{Q}},z) - F^{\theta} (\mathbf{\hat{Q}}',z)|
        &\leq L_i^2 Q_{\max} \Big|\Gamma(\mathbf{\hat{Q}})    \frac{1}{n} \sum_{j \in \nik}  \hat Q_j(s_{\njk},a_{\njk}) - \Gamma(\mathbf{\hat{Q}}')   \frac{1}{n} \sum_{j \in \nik}  \hat Q_j'(s_{\njk},a_{\njk}) \Big|\\
        &\leq L_i^2Q_{\max} \Big|(\Gamma(\mathbf{\hat{Q}})- \Gamma(\mathbf{\hat{Q}}'))    \frac{1}{n} \sum_{j \in \nik}  \hat Q_j(s_{\njk},a_{\njk})\Big|\\
        &\qquad + L_i^2Q_{\max} \Gamma(\mathbf{\hat{Q}}')  \Big| \frac{1}{n} \sum_{j \in \nik} (\hat Q_j(s_{\njk},a_{\njk})- \hat Q_j'(s_{\njk},a_{\njk})) \Big|.
    \end{align*}
Note that
\begin{align*}
    |\Gamma(\mathbf{\hat{Q}})- \Gamma(\mathbf{\hat{Q}}')| = \frac{ |\max_j \Vert \hat{Q}_j\Vert_\infty - \max_j \Vert \hat{Q}_j'\Vert_\infty |}{(1 + \max_j \Vert \hat{Q}_j\Vert_\infty) (1 + \max_j \Vert \hat{Q}_j'\Vert_\infty) } \leq \frac{ \sum_j \Vert \hat{Q}_j - \hat{Q}_j'\Vert_\infty }{(1 + \max_j \Vert \hat{Q}_j\Vert_\infty)}.
\end{align*}
As such,
\begin{align*}
    |F^\theta (\mathbf{\hat{Q}},z) - F^{\theta} (\mathbf{\hat{Q}}',z)| &\leq L_i^2 Q_{\max} \sum_j \Vert \hat{Q}_j - \hat{Q}_j'\Vert_\infty + L_i^2 Q_{\max}\frac{1}{n} \sum_{j} \Vert\hat{Q}_j - \hat{Q}_j'\Vert_\infty\\
    &:= L_{Q,F}'  \sum_{j} \Vert\hat{Q}_j - \hat{Q}_j'\Vert_\infty,
\end{align*}
which shows $F^\theta (\mathbf{\hat{Q}},z) $ is Lipschitz in $\mathbf{\hat{Q}}$. Then, by \eqref{eq:actor:hatF_dpi},
\begin{align*}
    | \hat{F}^\theta (\mathbf{\hat{Q}},z) - \hat{F}^{\theta} (\mathbf{\hat{Q}'},z) | 
    &\leq \sum_{t=0}^\infty \sum_{z'\in\mathcal{Z}} |d^{\theta,t}(z') - \pi^\theta(z')| \Big| F^\theta(\mathbf{\hat{Q}},z') - F^{\theta}(\mathbf{\hat{Q}'},z') \Big|\\
    &\leq \sum_{t=0}^\infty \Vert d^{\theta,t} - \pi^\theta \Vert_1 L_{Q,F}'  \sum_{j} \Vert\hat{Q}_j - \hat{Q}_j'\Vert_\infty \\
    &\leq \sum_{t=0}^\infty  2 c_\infty \mu_D^t L_{Q,F}'  \sum_{j} \Vert\hat{Q}_j - \hat{Q}_j'\Vert_\infty \\
    &\leq L_{Q,F}\sum_{j} \Vert\hat{Q}_j - \hat{Q}_j'\Vert_\infty ,
\end{align*}
which shows $\hat{F}^\theta (\mathbf{\hat{Q}},z)$ is Lipschitz in $\mathbf{\hat{Q}}$.
\qed
     
\subsection{Proof of Lemma~\ref{lem:actor:e3}}\label{subsec:actor:e3}
    Let $c_j^{\theta(t)}$ be the constant in Theorem~\ref{thm:critic}(a). Then, 
\begin{align*}
    e_i^3(t)&=  \E_{(s,a)\sim \sd^{\theta(t)}}\nabla_{\theta_i} \log \zeta_i^{\theta_i(t)}(a_i|s_i) \frac{1}{n} \Big[ \sum_{j \in \nik}  \hat Q_j^{\theta(t)}(s_{\njk},a_{\njk}) -  \sum_{j=1}^n Q_j^{\theta(t)} (s,a)  \Big]\\
    &= \E_{(s,a)\sim \sd^{\theta(t)}}\nabla_{\theta_i} \log \zeta_i^{\theta_i(t)}(a_i|s_i) \frac{1}{n} \Big[ \sum_{j =1}^n  \hat Q_j^{\theta(t)}(s_{\njk},a_{\njk}) -  \sum_{j=1}^n Q_j^{\theta(t)} (s,a)  \Big]\\
    &= \frac{1}{n}  \sum_{j =1}^n  \E_{(s,a)\sim \sd^{\theta(t)}}\nabla_{\theta_i} \log \zeta_i^{\theta_i(t)}(a_i|s_i) \Big[ \hat Q_j^{\theta(t)}(s_{\njk},a_{\njk}) + c_j^{\theta(t)}-  Q_j^{\theta(t)} (s,a)  \Big],
\end{align*}
where in the second equality, we have used for all $j\not\in \nik$, \[\mathbb{E}_{(s,a)\sim\pi^{\theta(t)}}  \nabla_{\theta_i}  \log \zeta_i^{\theta_i(t)}(a_i|s_i)    \hat{Q}_j^{\theta(t)}(s_{\njk},a_{\njk}) = 0, \]
and in the third equality, we have used for all $j$, $\mathbb{E}_{(s,a)\sim\pi^{\theta(t)}}  \nabla_{\theta_i}  \log \zeta_i^{\theta_i(t)}(a_i|s_i)  c_j^{\theta(t)} = 0$. The reason of these is due to $\hat{Q}_j^{\theta(t)}(s_{\njk},a_{\njk})$ does not depend on $a_i$ when $j\notin \nik$, and $c_j^{\theta(t)}$ does not depend on $a_i$ for all $j$. For more details, see \eqref{eq:truncated_pg_bias} in the proof of Lemma~\ref{lem:truncated_pg} in \Cref{sec:truncated_pg}.

As such, by Cauchy Schwarz inequality and Theorem~\ref{thm:critic}(a),
\begin{align*}
    \Vert e_i^3(t)\Vert 
    &\leq \frac{1}{n}  \sum_{j =1}^n \sqrt{\E_{(s,a)\sim \sd^{\theta(t)}} \Vert \nabla_{\theta_i}  \log \zeta_i^{\theta_i(t)}(a_i|s_i)\Vert^2 } \sqrt{\E_{(s,a)\sim \sd^{\theta(t)}}   \Big[ \hat Q_j^{\theta(t)}(s_{\njk},a_{\njk}) +  c_j^{\theta(t)}-  Q_j^{\theta(t)} (s,a)  \Big]^2}.\\
    &\leq  L_i \frac{  c \rhok}{1-\mu_{D}}.
\end{align*}


\section{Detailed Numerical Experiments} \label{sec:numerical_detail}

We consider a wireless network with multiple access points \citep{zocca2019temporal}, where there is a set of users  $U = \{u_1, u_2, \cdots, u_n\},$ and a set of network access points $Y = \{y_1, y_2, \cdots, y_m\}$. Each user $u_i$ only has access to a subset $Y_i \subseteq Y$ of the access points. We identify the set of users $U$ with the set of agents $\mathcal{N}$ in our model, and we define the interaction graph as the conflict graph, in which two users $u_i$ and $u_j$ are neighbors if and only if they share an access point. In other words, the neighbors of $u_i$ includes $N_i = \{u_j\in U: Y_i\cap Y_j\neq \emptyset \}$. 

Each user $u_i$ maintains a queue of packets defined as follows. At each time step, $u_i$ receives a new packet with probability $p_i$ and the new packet has a initial deadline $d_i$. At each time step, if the packet is successfully sent out (we will define ``send out'' later), it will be removed from the queue; otherwise, its deadline will decrease by $1$ and is discarded immediately from the queue if its remaining deadline is zero. 
The local state $s_i$ of $u_i$ is a characterization of the queue of the packets, and is represented by a $d_i$ binary tuple $s_i = (e_1, e_2, \cdots, e_{d_i})\in\mathcal{S}_i = \{0,1\}^{d_i}$, where for each $m \in \{1,\ldots, d_i\}$, $e_m \in \{0, 1\}$ indicates whether $u_i$ has a packet with remaining deadline $m$. 

At each time step, user $u_i$ can choose to send the earliest packet in its queue to one of the access points in its available set $ Y_i$, or not sending at all. In other words, the action space is $\mathcal{A}_i = \{\textrm{null}\} \cup Y_i  $, where $\textrm{null}$ represents the action of not sending. When the user has an empty queue, then all actions will be understood as the $\textrm{null}$ action. At each time step, if $u_i$'s queue is non-empty and it takes action $a_i = y_k \in Y_i$, i.e. sending the packet to access point $y_k$, then the packet is transmitted with success probability $q_k$ that depends on the access point $y_k$, \emph{conditioned on} no other users select this same access point; however, if another user chooses to send a packet to the same access point (i.e. a collision), neither packet is sent. A user receives a local reward of $1$ once successfully sending a packet. In this setting, the average reward can be interpreted as the throughput in stationarity. 

In the experiments, we set the deadline as $d_i = 2$, and all parameters $p_i$ (packet arrival probability for user $u_i$) and $q_k$ (success transmission probability for access point $y_k$) is generated uniformly random from $[0,1]$. We consider a grid of $5\times 5$ users  in Figure~\ref{fig:communication_grid_appendix}, where each user has access points on the corners of its area. We run the Scalable Actor Critic algorithm with $\khop=0,1$ to learn a localized soft-max policy, starting from a initial policy where the action is chosen uniformly random. 
We compare the proposed method with a benchmark based on the localized ALOHA protocol \citep{aloha}, where each user has a certain probability of sending the earliest packet and otherwise not sending at all. 
When it sends, it sends the packet to a random access point in its available set, with probability proportion to the success transmission probability of this access point and inverse proportion to the number of users that share this access point.
The results are shown in Figure~\ref{fig:comm_ave_reward_5by5_appendix}. It shows that the proposed algorithm can outperform the ALOHA based benchmark, despite the proposed algorithm does not have access to the transmission probability $q_k$ which the benchmark has access to.


\begin{figure}
  \begin{minipage}[b]{0.5\textwidth}
    \centering
    \includegraphics[width = .8\textwidth]{grid5by5.png}
    \caption{Setup of users and access points.}
    \label{fig:communication_grid_appendix}
  \end{minipage}~
  \begin{minipage}[b]{0.5\textwidth}
    \centering
    \includegraphics[width = \textwidth]{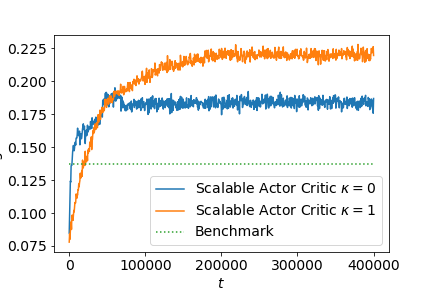}
    \caption{Average reward over the training process. }
    \label{fig:comm_ave_reward_5by5_appendix}
  \end{minipage}
\end{figure}
}{}

\end{document}